\theoremstyle{plain}
\newtheorem{thm}{Theorem}
\newtheorem{cor}[thm]{Corollary}
\newtheorem{pro}[thm]{Proposition}
\theoremstyle{definition}
\newtheorem{rmk}[thm]{Remark}
\newcommand{\red}[1]{\textcolor{red}{#1}}
\let\rm\mathrm
\let\cal\mathcal
\let\goth\mathfrak
\let\hat\widehat
\let\tilde\widetilde
\let\phi\varphi
\let\epsilon\varepsilon
\def\Q{{\bf Q}} 
\def\Z{{\bf Z}}
\def\C{{\bf C}}
\def\N{{\bf N}}
\def\R{{\bf R}}
\def\A{{\bf A}}
\def\E{{\bf E}}
\def\B{{\bf B}}
\def\G{{\cal G}}
\def\F{{\bf F}}
\def\D{{\mathbf{D}}}
\def\Kunder{{\underline{K}}}
\def\Lunder{{\underline{L}}}
\def\End{{\mathrm{End}}}
\def\GL{{\mathrm{GL}}}
\def\k{{\bf k}}
\def\j{{\bf j}}
\def\i{{\bf i}}
\def\D{{\bf D}}
\def\M{{\bf M}}
\def\c{{\bf c}}
\def\n{{\underline{n}}}
\def\r{{\underline{r}}}
\def\s{{\underline{s}}}
\def\Id{{\mathrm{Id}}}
\def\id{{\mathrm{id}}}
\def\Aut{{\mathrm{Aut}}}
\def\Mat{{\mathrm{Mat}}}
\def\Fil{{\mathrm{Fil}}}
\def\pa{{\mathrm{pa}}}
\def\an{{\mathrm{an}}}
\def\Sen{{\mathrm{Sen}}}
\def\dR{{\mathrm{dR}}}
\def\crys{{\mathrm{crys}}}
\def\ra{{\rightarrow}}
\def\dif{{\mathrm{Dif}}}
\def\vall{{\mathrm{val}_{\Lambda}}}
\def\fin{{\mathrm{fin}}}
\def\gr{{\mathrm{gr}}}
\def\HT{{\mathrm{HT}}}
\renewcommand{\O}{{{\cal O}}}
\def\Zp{{\Z_p}}
\def\Qp{{\Q_p}}
\def\Cp{{\C_p}}
\def\Fp{{\F_p}}
\def\At{{\tilde{\bf{A}}}}
\def\Atplus{{\tilde{\bf{A}}^+}}
\def\Bt{{\tilde{\bf{B}}}}
\def\Btplus{{\tilde{\bf{B}}^+}}
\def\Bplusmax{{\bf{B}^{+}_{\mathrm{max}}}}
\def\Bdr{{\bf{B}_{\mathrm{dR}}}}
\def\Bdrplus{{\bf{B}_{\mathrm{dR}}^+}}
\def\Btrig{{\Bt_{\mathrm{rig}}^{\dagger}}}
\def\Btrigplus{{\Bt_{\mathrm{rig}}^+}}
\def\Btlogplus{{\Bt_{\log}^+}}
\def\Btrigr{{\Bt_{\mathrm{rig}}^{\dagger,r}}}
\def\Gal{{\mathrm{Gal}}}
\def\GL{{\mathrm{GL}}}
\def\k{{\mathbf{k}}}
\def\la{{\mathrm{la}}}
\def\cbf{{\bf{c}}}
\def\kbf{{\bf{k}}}
\def\cycl{{\mathrm{cycl}}}
\def\crys{{\mathrm{crys}}}
\def\st{{\mathrm{st}}}
\def\rig{{\mathrm{rig}}}
\def\M{{\mathrm{M}}}
\def\Btrig{{\Bt_{\rig}}}
\def\Kbar{{\overline{K}}}
\def\EtplusDel{{\tilde{\bf{E}}^+_{\Delta}}}
\def\AtplusDel{{\tilde{\bf{A}}^+_{\Delta}}}
\def\BdrDel{{{\bf{B}}_{\mathrm{dR},\Delta}}}
\def\BdrplusDel{{{\bf{B}}_{\mathrm{dR},\Delta}^+}}
\def\BtrigDel{{\Bt_{\mathrm{rig},\Delta}^{\dagger}}}
\def\BtrigrDel{{\Bt_{\mathrm{rig},\Delta}^{\dagger, r_{m_1},\dots,r_{m_{\delta}}}}}
\def\BtrigplusDel{{\Bt_{\mathrm{rig},\Delta}^+}}
\def\BtlogplusDel{{\Bt_{\log,\Delta}^+}}
\def\Btrigr{{\Bt_{\mathrm{rig},\Delta}^{\dagger, r}}}
\def\AplusmaxDel{{\bf{A}^{+}_{\mathrm{max},\Delta}}}
\def\BtlogDel{{\Bt_{\mathrm{log},\Delta}^{\dagger}}}
\def\Btlog{{\Bt_{\mathrm{log}}^{\dagger}}}
\def\DcrysDel{{\mathbf{D}_{\mathrm{crys},\Delta}}}
\def\DrigDel{{\mathbf{D}^{\dagger}_{\mathrm{rig},\Delta}}}
\def\DstDel{{\mathbf{D}_{\mathrm{st},\Delta}}}
\def\wotimes{{\hat{\otimes}}}
\newcommand{\Btint}[2]{\tilde{\bf{B}}_{\left[#1;#2\right]}}
\newcommand{\AtintDel}[2]{\tilde{\bf{A}}_{\left[#1;#2\right],\Delta}}
\newcommand{\AtdaggerradDel}[1]{\tilde{\bf{A}}^{\dagger,#1}_{\Delta}}
\newcommand{\teich}[1]{\left[#1\right]}
\newcommand{\overbar}[1]{\mkern 1.5mu\overline{\mkern-1.5mu#1\mkern-1.5mu}\mkern 1.5mu}
\newcommand{\norm}[1]{\left\|#1\right\|}
\newcommand{\tate}[2]{#1\langle#2\rangle}
\def\varpibar{{\overline{\varpi}}}
\def\u{{\overline{u}}}
\def\Btrig{{\Bt_{\mathrm{rig}}^{\dagger}}}
\def\wsum{{\hat{\bigoplus}}}
\def\Lambt{{\tilde{\Lambda}}}
\def\val{{\rm{val}}}
\title{Multivariable $p$-adic Hodge theory for products of Galois groups}
\author{Léo Poyeton, Pietro Vanni}
\date{}
\begin{document}

\begin{abstract}
    In this paper we explain how to attach to a family of $p$-adic representations of a product of Galois groups an overconvergent family of multivariable $(\phi,\Gamma)$-modules, generalizing results from Pal-Zabradi and Carter-Kedlaya-Zabradi, using Colmez-Sen-Tate descent. We also define rings of multivariable crystalline and semistable periods, and explain how to recover this multivariable $p$-adic theory attached to a family of representations from its multivariable $(\phi,\Gamma)$-module. We also explain how our framework allows us to recover the main results of Brinon-Chiarellotto-Mazzari on multivariable $p$-adic Galois representations.
\end{abstract}

\maketitle
\tableofcontents

\subjclass{11S20; 11F85; 13J10; 46S10}
\keywords{products of Galois groups; rings of $p$-adic periods; $(\phi,\Gamma)$-modules}

\section*{Introduction}
Let $p$ be a prime number and let $K$ be a finite extension of $\Qp$. 
Let $\G_K$ be the absolute Galois group of $K$.
The study of $p$-adic representations of $\G_K$, that is finite dimensional $\Qp$-vector spaces, endowed with a continuous action of $\G_K$, and more generally of families of $p$-adic representations of $\G_K$, classicaly relies on the rings of periods of $p$-adic Hodge theory \cite{fontaine1994corps} and on $(\phi,\Gamma)$-modules \cite{Fon90}. 

Recently, interest has risen around the study of $p$-adic representations of products of Galois groups of $p$-adic fields, initiated by Zabradi \cite{ZabMulti0} and followed up by Zabradi and other authors \cite{PalZab21,KedCarZab,BriChiaMaz21}. Products of Galois groups naturally appear in the approach to geometric Langlands developed for $\GL_2$ by Drinfeld \cite{drinfeld1980langlands} and extended to $\GL_n$ by L. Lafforgue \cite{lafforgue1997chtoucas} and to other reductive groups by V. Lafforgue \cite{lafforgue2014introduction,lafforgue2018chtoucas}, and the development of a \textit{multivariable $p$-adic Hodge theory} appears to be a necessary step in order to understand the $p$-adic representations of those products of Galois groups. 

In the work of Scholze and Weistein \cite{weinstein2017gal,scholze2020berkeley}, a product of Galois groups $\G_{K_1} \times \cdots \times \G_{K_\delta}$ of $\delta$ $p$-adic fields, with $\delta\in\N$, can be understood as the fundamental group of a diamond $\mathrm{Spd} K_1 \times \cdots \times \mathrm{Spd} K_\delta$. It then makes sense to study $p$-adic representations for this fundamental group, viewed as coefficients for the diamond $\mathrm{Spd} K_1 \times \cdots \times \mathrm{Spd} K_\delta$. 

In this article, we develop an analogue of the framework of classical $p$-adic Hodge theory and $(\phi,\Gamma)$-modules in the multivariable setting, and prove various results which are analogue to the classical ones, following and expanding upon the work of Carter, Kedlaya, Pal and Zabradi \cite{ZabMulti0,PalZab21,KedCarZab} and of Brinon, Chiarellotto and Mazzari \cite{BriChiaMaz21}. 

In what follows, $\Delta$ is a finite set of cardinality $\delta$, and for each $\alpha \in \Delta$, $K_\alpha$ is a finite extension of $\Qp$. We also let $\G_{\Kunder,\Delta}:= \prod_{\alpha \in \Delta}\G_{K_\alpha}$. 

In the classical setting, that is when $\delta = 1$, Fontaine has constructed \cite{Fon90} an equivalence of categories $V \mapsto D(V)$ between the category of $p$-adic representations of $\G_K$ and the category of étale $(\phi,\Gamma_K)$-modules, which are finite dimensional vector spaces over a local field $\B_K$ of dimension $2$, endowed with commuting semilinear actions of a Frobenius $\phi$ and of the Galois group $\Gamma_K$ of the cyclotomic extension $K_\infty$ of $K$. The étaleness condition only depends on the $\phi$-action. 

In order to study $p$-adic representations of $\G_K$, Fontaine has moreover introduced (see \cite{fontaine1994corps}) rings of $p$-adic periods $\B_{\crys}, \B_{\st}$ and $\Bdr$, which are topological $\Qp$-algebras endowed with an action of $\G_K$ and with additional structures, such that if $B$ is any of those rings and if $V$ is a $p$-adic representation of $\G_K$, then $\D_B(V):=(B \otimes_{\Qp}V)^{\G_K}$ is a $B^{\G_K}$-module that inherits those additional structures and provides interesting invariants attached to $V$. A $p$-adic representation $V$ is said to be admissible if $V \otimes_{\Qp}B \simeq B^d$ as $\G_K$-modules. The case where $B = \Cp$ had been previously studied by Tate \cite{Tat67} and Sen \cite{sen1980continuous}. 

In \cite{ZabMulti0}, Zabradi has constructed the analogue of Fontaine's $(\phi,\Gamma)$-modules for the multivariable case. As in the classical setting, $p$-adic representations of $\G_K$ are classified by étale $(\phi_{\Delta},\Gamma_{\Kunder,\Delta})$-modules over some ring $\cal{E}_\Delta$, where the ring $\cal{E}_\Delta$ is equipped with $\delta$ distinct Frobenii and $\Gamma_{\Kunder,\Delta}$ is the product of the Galois groups $\Gamma_{K_\alpha}$, and $(\phi_{\Delta},\Gamma_{\Kunder,\Delta})$-modules are defined as projective modules over $\cal{E}_\Delta$, endowed with commuting semilinear actions of $\Gamma_{\Kunder,\Delta}$ and of each partial Frobenius. In contrast with what happens in the classical setting, the étaleness condition also involves the Galois action in the multivariable setting. 

In \cite{BriChiaMaz21}, Brinon, Chiarellotto and Mazzari have defined $\B_{\rm{dR},\Delta}$, an analogue of the ring $\Cp$ and of the ring of periods $\Bdr$, which are the first step to constructing analogues of all the rings of periods of classical $p$-adic Hodge theory. 

One key result in the classical setting is the \textit{overconvergence} of $p$-adic representations, proven in \cite{cherbonnier1998representations} by Cherbonnier and Colmez, which means that the $(\phi,\Gamma)$-modules of Fontaine over $\B_K$ are the extensions of scalars to $\B_K$ of $(\phi,\Gamma)$-modules defined over a smaller ring $\B_K^\dagger$ which has an analytic interpretation as bounded Laurent series defined over an annulus whose outer boundary is the unit circle. This result of overconvergence is crucial to recover the $p$-adic Hodge theory attached to a representation $V$ from its $(\phi,\Gamma)$-module \cite{Ber02}, using the Robba ring $\B_{\rig,K}^\dagger$ as a bridge. 

The theory can be extended to $p$-adic families: let $S$ be a $\Qp$-Banach algebra with maximal spectrum $\cal{X}$ and such that for $x \in \cal{X}$, $S/\mathfrak{m}_x$ is a finite extension of $\Qp$. A family of representations of $\G_K$ is a free $S$-module of finite rank endowed with a continuous $S$-linear action of $\G_K$.

The theorem of overconvergence of Cherbonnier and Colmez has been extended to families \cite{BC08} thanks to the use of so-called ``Colmez-Tate-Sen'' conditions, and one can once again relate the $p$-adic Hodge theory attached to a family of representations with its overconvergent $(\phi,\Gamma)$-module \cite{Bellovinsheaves}. The functor $V \mapsto D^\dagger(V)$ defined by Berger and Colmez is however no longer an equivalence of categories.  

The constructions of \cite{ZabMulti0, PalZab21, KedCarZab, BriChiaMaz21} suggest that the right point of view in order to define analogues of the classical rings appearing in $p$-adic Hodge theory for products of Galois groups is to take the completed tensor product (which sometimes means that we have to make a choice on what topology we put on our rings) of $\delta$ copies of the classical rings of $p$-adic Hodge theory, endowed with natural actions of $\G_{\Kunder,\Delta}$ and of partial Frobenii $\phi_\alpha$ (when they exist on the classical rings) each corresponding to the action of the Frobenius on one copy of those rings. 

Our first step is to construct various rings of periods relying on this formalism of taking completed tensor products of $\delta$ copies of classical rings of periods, some of which were already defined this way in \cite{ZabMulti0, PalZab21, KedCarZab, BriChiaMaz21}, and prove that some of them are actually isomorphic to some rings defined in a different fashion in one of \cite{ZabMulti0, PalZab21, KedCarZab, BriChiaMaz21}. In particular, we define a ring $\B_{\Kunder,\Delta}$ which is isomorphic to the ring $\cal{E}_\Delta$ of \cite{ZabMulti0,KedCarZab}, and an overconvergent subring $\B_{\Kunder,\Delta}^\dagger$ isomorphic to the ring $\cal{E}_\Delta^\dagger$ appearing in \cite{PalZab21,KedCarZab}. This allows us to define the same multivariable $(\phi,\Gamma)$-modules over either of the rings $\At_{\Kunder,\Delta}^\dagger$, $\At_{\Kunder,\Delta}$ $\A_{\Kunder,\Delta}^\dagger$ or $\A_{\Kunder,\Delta}$, which are defined in \S 3, and to apply the results from those papers to ours rings. Just as in the classical case, the ring $\B_{\Kunder,\Delta}$ does not have a nice analytic interpretation but the ring $\B_{\Kunder,\Delta}^\dagger$ does: it corresponds to bounded Laurent series over a $\delta$-dimensional polyannulus whose outer boundary is the unit polycircle. 

One consequence of this point of view when defining an analogue of some of the rings appearing in the classical setting is that the Colmez-Tate-Sen conditions can be applied ``factor by factor'' and we are thus able to recover the multivariable Sen theory of \cite{BriChiaMaz21} over the ring $C_\Delta:=\Cp \wotimes_{\Qp} \cdots \wotimes_{\Qp} \Cp$, and generalize it to families admitting a Galois stable integral lattice. In what follows, $\n$ denotes a $\delta$-uple of positive integers $(n_\alpha)_{\alpha \in \Delta}$ and if $\Lunder = (L_\alpha)_{\alpha \in \Delta}$ then $L_{\n,\Delta} = L_{\alpha_1}(\mu_{p^{n_{\alpha_1}}})\otimes_{\Qp} \cdots \otimes_{\Qp} L_{\alpha_\delta}(\mu_{p^{n_{\alpha_\delta}}})$.

\begin{theo}
    Let $S$ be a $\Qp$-Banach algebra, let $T$ be an $\O_S$-representation of dimension $d$ of $\G_{\underline{K},\Delta}$, and let $V = S \otimes_{\O_S}T$. Let $\underline{L} = (L_1,\ldots,L_\delta)$ be such that for all $i \in \Delta$, $L_i/K_i$ is Galois extension and such that $\G_{\underline{L},\Delta}$ acts trivially on $T/12p^{\delta}$. Then there exists $\n(\underline{L})$ depending only on $\Lunder$ such that for all $\n \geq \n(\underline{L})$, $(S\hat{\otimes}\C_{\Delta})\otimes_S V$ contains a unique sub-$(S \otimes L_{\n,\Delta})$-module $\D_\Sen^{L_{\Delta,\n}}(V)$, free of rank $d$, such that:
\begin{enumerate}
\item $\D_\Sen^{L_{\Delta,\n}}(V)$ is fixed by $H_{\underline{L},\Delta}$ and stable by $\G_{\Kunder,\Delta}$;
\item $\D_\Sen^{L_{\Delta,\n}}(V)$ contains a basis over $S \otimes L_{\n,\Delta}$ which is $c$-fixed by $\Gamma_{\underline{L},\Delta}$, for some $c\in\R_{>0}$ (in the sense of \cite[Proposition 3.3.1]{BC08});
\item the natural map $(S\hat{\otimes}\C_{\Delta}) \otimes_{S \otimes L_{\n,\Delta}}\D_\Sen^{L_{\Delta,\n}}(V) \ra (S\hat{\otimes}\C_{\Delta})\otimes_S V$ is an isomorphism.
\end{enumerate} 
Moreover, we have $S/\mathfrak{m}_x\otimes_S \D_\Sen^{L_{\Delta,\n}}(V) \simeq \D_\Sen^{L_{\Delta,\n}}(V_x)$.
\end{theo}

Additionally we explain how to use the notion locally analytic vectors to recover this multivariable Sen theory, both for $C_\Delta$ and for $\B^+_{\rm{dR},\Delta}$, following the ideas of \cite{Ber14SenLa}. 

The same approach of the Colmez-Tate-Sen axioms for completed tensor products allows us to derive an overconvergence result for multivariable $(\phi,\Gamma)$-modules, which recovers the main results from \cite{PalZab21} and \cite{KedCarZab}, and which once again also applies to families of representations admitting a Galois stable integral lattice:

\begin{theo}
    If $V$ is an $S$-family of representations of $\G_{\Kunder,\Delta}$, free of dimension $d$, admitting a Galois-stable integral lattice, and if $\s \geq \s(V)$ (a set of integers depending only on $V$), then:
    \begin{enumerate}
        \item $\D_{\Kunder,\Delta}^{\dagger,\s}(V)$ is a projective $S \hat{\otimes}_{\Qp}\B_{\Kunder,\Delta}^{\dagger,\s}$-module of rank $d$;
        \item the map $(S \hat{\otimes}_{\Qp}\Bt_{\Kunder,\Delta}^{\dagger,\s})\otimes_{S \hat{\otimes}_{\Qp}\B_{\Kunder,\Delta}^{\dagger,\s}}\D_{\Kunder,\Delta}^{\dagger,\s}(V) \ra (S \hat{\otimes}_{\Qp}\Bt_{\Kunder,\Delta}^{\dagger,\s})\otimes_SV$ is an isomorphism;
        \item if $x \in \cal{X}$, the map $S/\mathfrak{m}_x \otimes_S\D_{\Kunder,\Delta}^{\dagger,\s}(V) \ra \D_{\Kunder,\Delta}^{\dagger,\s}(V_x)$ is an isomorphism.
    \end{enumerate}
\end{theo}

We explained above that unfortunately and in contrast with the classical setting, the notion of étaleness involves the $\Gamma_{\Kunder,\Delta}$-action. Theorem 6.19 of \cite{KedCarZab} shows that this condition can be relaxed by asking that the $\Gamma_{\Kunder,\Delta}$-action is ``bounded'' but the authors are only able to prove it for $(\phi_{\Delta},\Gamma_{\Kunder,\Delta})$-modules over $\A_{\Kunder,\Delta}^\dagger$ or $\A_{\Kunder,\Delta}$. The Colmez-Sen-Tate conditions allow us to descend multivariable $(\phi,\Gamma)$-modules over $\At_{\Kunder,\Delta}^\dagger$ to  multivariable $(\phi,\Gamma)$-modules over $\A_{\Kunder,\Delta}^\dagger$ while preserving this boundness condition, and this fact allows us to show that their result is also true for $(\phi,\Gamma)$-modules with coefficients in these larger rings. 

We then prove that the ring $\B^+_{\rm{dR},\Delta}$ defined in \cite{BriChiaMaz21} can also be defined within our framework (so that their ring is isomorphic to the completed tensor product of $\delta$ copies of the classical $\Bdrplus$). This allows us to prove that a representation is ``multivariable de Rham'' if and only if its restriction at each factor $\G_{K_\alpha}$ is de Rham (this result was also proved in \cite{KedlayadeRhamprop} using different methods), and thus to give a positive answer to the following question of Brinon Chiarellotto and Mazzari: if $V$ is a multivariable representation such that its multivariable de Rham module $\D_{\dR,\Delta}:=(\BdrDel\otimes_{\Qp}V)^{\G_{\Kunder}}$ if free of rank $\dim_{\Qp}V$ over $(\BdrDel)^{\G_{\Kunder,\Delta}}$, is $V$ admissible for $\BdrDel$ ?

Finally, we give an analogue of the results of \cite{Ber02} and \cite{Bellovinsheaves} for the classical setting, defining multivariable crystalline and semistable representations, and explaining how to recover the modules $\D_{\crys,\Delta}(V)$ and $\D_{\st,\Delta}(V)$ attached to a (family of) $p$-adic representation(s) $V$ from the multivariable $(\phi,\Gamma)$-module $\D_{\rig,\Delta}^{\dagger}(V)$ attached to $V$ and defined over the multivariable Robba ring $\B_{\rig,\Kunder,\Delta}^{\dagger}$:

\begin{theo}
     Let $V$ be a representation of $\G_{\underline{K}}$ over $S$ admitting an invariant $\cal{O}_S$-lattice. Then 
    \begin{equation*}\label{comp crys phigamma}
    \mathbf{D}_{\mathrm{crys},\Delta}(V)\cong(\mathbf{D}^{\dagger}_{\mathrm{rig},\Delta}(V)[1/t_{\Delta}])^{\Gamma_{\underline{K},\Delta}},
\end{equation*}
and 
\begin{equation*}\label{comp crys phigamma}
    \mathbf{D}_{\mathrm{st},\Delta}(V)\cong(\mathbf{D}^{\dagger}_{\mathrm{log},\Delta}(V)[1/t_{\Delta}])^{\Gamma_{\underline{K},\Delta}}.
\end{equation*}
\end{theo}

\subsection*{Acknowledgments}
The authors would like to thank Olivier Brinon and Bruno Chiarellotto for their interest and for several fruitful discussions on the content of the article.

\section{Non archimedean functional analysis}
This section is devoted to some lemmas and propositions related to completed (projective) tensor products.
As we will interchange the viewpoint of normed modules (as in \cite{BoschAna}), topological vector spaces (as in \cite{Schnei}), and adic rings (as in \cite{huber1993continuous}), we firstly make sure that in our case the various notions of completed tensor product coincide.
\begin{rema}
    Let $V$ and $W$ be Banach spaces over a nonarchimedean field $\mathcal{K}$. Then the completed tensor product of \cite{Schnei} is the same as the completed tensor product of \cite{BoschAna}, as the projective tensor product can be defined using the tensor seminorm, see \cite[p. 103]{Schnei}.
\end{rema}
Let now $n\in\N$, and let $R_1,\dots, R_n$ be topological rings over $\Zp$, where the topology in $R_i$ is defined by a principal ideal $(r_i)$, for $r_i\in R_i$ and $i=1,\dots, n$. Suppose that $R_i$ is separated for the $r_i$-adic topology, for $i=1,\dots, n$. We can define a norm on $R_i$ setting, for $a_i\in R_i$, $r=1,2$, 
\begin{equation}\label{adic norm}\norm{a_i}_{R_i}=2^{-m},\end{equation}
where $m$ is the biggest natural number such that $a_{i}\in (r_i)^{m}$.
It is clear that the $(r_i)$-adic topology and the topology defined by the norm \eqref{adic norm} on $R_i$ coincide. We assume that $p\in (r_i)$, for $i=1,\dots, n$. This implies that $R_i$ is a normed module over $\Zp$, as 
$$\norm{pa}_{R_i}\le |p|_{\Zp}\norm{a}_{R_i},$$ for $a\in R_i$ (if we normalize the norm on $\Zp$ setting $|p|_{\Zp}=\frac{1}{2}$).
\begin{prop}
    The topology induced by the tensor seminorm on $R_1\otimes_{\Zp}\dots\otimes_{\Zp}R_n$ coincides with the $(r_1,\dots,r_n)$-adic topology.
\end{prop}
\begin{proof}
    Suppose $\norm{a}_{R_1\otimes_{\Zp}\cdots\otimes_{\Zp}R_n}\le c$, for $c$ a nonzero rational number. Thus since the tensor seminorm is discrete $a$ can be written as a finite sum $\sum_{j\in J}a^1_j\otimes_{\Zp}\dots\otimes_{\Zp}a^n_j$, for $a_j^1\in R_1$, \dots, $a_j^n\in R_n$, such that $\bar{j}\in J$ that satisfies $$\norm{a_{j}^1}_{R_1}\dots\norm{a_{j}^n}_{R_n}\le c,$$
    for every $j\in J$.
    But this clearly implies $a\in (r_1,\dots, r_n)^{\lfloor{\rm{max}\{0,-\rm{log}_{2}(c)}\rfloor\}}$. Conversely suppose that $a\in (r_1,\dots,r_n)^m$, for $m\in\N$. Then $a$ can be written as $a=i^mb$, for $i\in (r_1,\dots,r_n)^{m}$ and thus it is clear that $\norm{a}_{R_1\otimes_{\Zp}\dots\otimes_{\Zp}R_n}\le 2^{-m}$.
\end{proof}
In particular the Proposition above implies that the completion of $R_1\otimes_{\Zp}\cdots\otimes_{\Zp} R_n$ for the tensor seminorm coincides with its $(r_1,\dots,r_n)$-adic completion.\\
\begin{prop}
\label{prop tens Fréchet inj}
     Let $K$ be a finite extension of $\Qp$. If $U_0 \ra U_1$ and $V_0 \ra V_1$ are injective continuous maps of $K$-Fréchet spaces, then the induced map $U_0 \hat{\otimes}_{K} V_0 \ra U_1 \hat{\otimes}_{K} V_1$ is injective.
\end{prop}
\begin{proof}
    This follows from \cite[Proposition 1.1.26]{Emer} (note that Fréchet spaces are Hausdorff and complete locally convex topological vector spaces by definition, and they are bornological by \cite[Proposition 8.2]{Schnei}).
\end{proof}
In this article we will need to take group invariants of iterated completed tensor products. To do so we mainly rely on the following lemmas. 
\begin{lemm}\label{inv and tensors}
    Let $K$ be a finite extension of $\Qp$.
    Let $\mathcal{A}=\mathrm{lim}_{i\in I}A_i$ and $\mathcal{A}'=\mathrm{lim}_{j\in J}A'_j$ be Fréchet spaces, where $A_i$ and $A'_j$ are Banach spaces over $K$ for each $i\in\N$ and for each $j\in\N$. 
    Suppose that there is a topological group $\goth{G}$ acting on $\mathcal{A}$, that stabilizes each $A_i$, and for which the diagram defining $\mathrm{lim}_{i\in I}A_i$ is $\goth{G}$-equivariant. Assume that the action is continous and $K$-linear. Then \begin{equation*}(\mathcal{A}\wotimes_{K}\mathcal{A}')^{\goth{G}}=\mathcal{A}^{\goth{G}}\wotimes_{K}\mathcal{A}'.\end{equation*}  
\end{lemm}
\begin{proof}
    By \cite[Proposition 1.1.29]{Emer} we have \begin{equation*}\mathcal{A}\wotimes_{K}\mathcal{A}'\cong\mathrm{lim}_{i\in I}\mathrm{lim}_{j\in J}A_i\wotimes_{K}A'_j.\end{equation*}
    Choosing a basis $\cal{B}_j$ of $A_j$ for each $j\in J$ we obtain an isomorphism
    \begin{equation*}
         A'_j\cong_{\cal{B}_j}\hat{\bigoplus}_{i'\in I_j}K
    \end{equation*}
    for each $j\in J$ (this can be done by \cite[Proposition 10.1]{Schnei}).
    These isomorphisms induce a $\goth{G}$-equivariant isomorphism
    \begin{equation*}
        \mathcal{A}\wotimes_{K}\mathcal{A}'\cong_{\cal{B}}\mathrm{lim}_{i\in I}\mathrm{lim}_{j\in J}\hat{\bigoplus}A_i
    \end{equation*}
    using \cite{BoschAna}, Proposition 2.1.7.8..
    Thus we obtain
    \begin{equation*}
        (\mathcal{A}\wotimes_{K}\mathcal{A}')^{\goth{G}}\cong_{\cal{B}}(\mathrm{lim}_{i\in I}\mathrm{lim}_{j\in J}\hat{\bigoplus}A_i)^{\goth{G}}=\mathrm{lim}_{i\in I}\mathrm{lim}_{j\in J}\hat{\bigoplus}A_i^{\goth{G}}.
    \end{equation*}
    But this implies the claim.
\end{proof}
The lemma above implies the following corollary
\begin{coro}\label{cor invariant of frechet factor by factor}
    With the hypotheses of Lemma \ref{inv and tensors}, 
    let $n\in\N$, and let $\cal{A}_i=\rm{lim}_{j_i\in J_i}A_{j_i}$, with $A_{j_i}$ Banach spaces over $K$, for $j_i\in J_i$ and $i=1,\dots,n$. Let ${\goth{G}}_i$ be a topological group acting $K$-linearly and continously on $\cal{A}_i$, for $i=1,\dots, n$. Suppose moreover that for $i=1,\dots,n$ the diagram defining $\cal{A}_i=\rm{lim}_{j_i\in J_i}A_{j_i}$ is $\goth{G}_i$-equivariant.
    Then $$(\cal{A}_1\wotimes_K\cdots\wotimes_K\cal{A}_{n})^{\goth{G}_1\times\cdots\times \goth{G}_n}\cong \cal{A}^{\goth{G}_1}_1\wotimes_K\dots\wotimes_K\cal{A}^{\goth{G}_n}_{n}.$$  
\end{coro}
\begin{proof}
    The claim follows using Lemma \ref{inv and tensors} inductively, as 
    $$\cal{A}_i^{\goth{G}_i}=\rm{lim}_{j_i\in J_i}A_{j_i}^{\goth{G}_i},$$ and $A_{j_i}^{\goth{G}_i}$ is a closed subspace of $A_{j_i}$ (so in particular it is a Banach space over $K$), which implies that $\cal{A}_i^{\goth{G}_i}$ is Fréchet over $K$.
\end{proof}

\begin{rema}\label{invariants LF spaces}
    The claim of Corollary \ref{inv and tensors} holds also if $\cal{A}_i$ is a strict equivariant union of Fréchet spaces over $K$ (satisfying the hypotheses of Lemma \ref{inv and tensors}), see \cite[Subsection 1.12.2]{PieThesis}). In what follows we assume that the LF spaces we consider are of this type, unless stated otherwise. 
\end{rema}
Let $K$ be a finite extension of $\Qp$,
and let $M$ be a normed module over $K$. We define \emph{the unit ball} of $M$ as the submodule of elements of norm less or equal to $1$. We denote it by $\O_{M}$. 
\begin{lemm}\label{unit ball}
    Let $A$ be a Banach space over $K$ and let $\O_{A}$ be its unit ball. Let $M$ be a normed module over $K$, with unit ball $\O_{M}$. The the unit ball of $A\wotimes_{K}M$ is $\O_{A}\wotimes_{\O_K}\O_{M}$. 
\end{lemm}
\begin{proof}
    We know that choosing a basis $\cal{B}$ of $A$, we have an isometric isomorphism $A\simeq_{\cal{B}}\hat{\bigoplus}_{i\in I}K$. We remark that the unit ball of $\hat{\bigoplus}_{i\in I}K$ is $\hat{\bigoplus}_{i\in I}\O_K$.\\
    We have $A\wotimes_{K}M\cong_{\cal{B}}\hat{\bigoplus}_{i\in I}M$, whose unit ball is $\hat{\bigoplus}_{i\in I}\O_{M}$. But \begin{equation*}\hat{\bigoplus}_{i\in I}\O_{M}\cong_{\cal{B}}\hat{\bigoplus}_{i\in I}\O_K\wotimes_{\O_K}\O_{M},\end{equation*} by \cite[Proposition 2.1.7.8]{BoschAna}, and this gives us the claim.
\end{proof}
\begin{lemm}
    Let $A$ be a $K$-Banach space and let $M$ be a normed module over $K$, endowed with an isometric $K$-linear action of a topological group $\goth{G}$. Then
    \begin{equation*}
        (\O_{A}\wotimes_{\O_K}\O_{M})^{\goth{G}}\cong\O_{A}\wotimes_{\O_K}\O_{M}^{\goth{G}}
    \end{equation*}
\end{lemm}
\begin{proof}
    Since the action of $\goth{G}$ is isometric then it preserves the unit ball, so the claim makes sense. Using the same strategy as in the proof of Lemma \ref{inv and tensors}, it is easy to see that 
    \begin{equation*}
        (A\wotimes_{K}M)^{\goth{G}}\cong A\wotimes_{K}M^{\goth{G}},
    \end{equation*}
    but then taking the unit balls of these spaces we get the claim.
\end{proof}
Arguing similarly as in the proof of Corollary \ref{cor invariant of frechet factor by factor} we obtain the following
\begin{coro}
    Let $n\in\N$, and let $A_i$ be a Banach space over $K$, for $i=1,\dots, n$. And let $\goth{G}_i$ be a topological group acting isometrically, $K$-linearly and continously on $A_i$, for $i=1,\dots, n$. Then
    $$ (\O_{A_1}\wotimes_{K}\cdots\wotimes_{K} \O_{A_n})^{\goth{G}_1\times\cdots\times \goth{G}_n}\cong \O_{A_1}^{\goth{G}_1}\wotimes_{K}\dots\wotimes_{K} \O_{A_n}^{\goth{G}_n}.$$
\end{coro}

Let $S$ be a $\Qp$-Banach algebra. A family of $p$-adic representations of $\G_{\Kunder,\Delta}$ is a free $S$-module of finite type, endowed with a continuous linear action of $\G_{\Kunder,\Delta}$. We will assume that our families admit a Galois-stable integral subring, i.e. that there exists a free $\O_S$-module $T$ such that $V=S\otimes_{\Zp}T$. As in the one-variable case, if $S=E$ is a field, this condition is always satisfied:

\begin{lemm}
\label{lemma stable lattice field}
    If $V$ is an $E$-representation of dimension $d$, then there exists a free $\O_E$-module $T$ of dimension $d$, stable by $\G_{\Kunder,\Delta}$ such that $V=E\otimes_{\O_E}T=T[1/p]$.
\end{lemm}
\begin{proof}
    This is the same proof as the one of \cite[Lemm. 2.3.1]{BC08}, replacing $\G_K$ by $\G_{\Kunder,\Delta}$.
\end{proof}

We also recall the following, which is a slight variant of an étale descent result and proposition 2.2.1 of \cite{BC08}:

Let now $B$ be a $\Qp$-Banach algebra, endowed with a continuous action of a finite group $G$. Let $\B^{\natural}$ denote the ring $B$ endowed with trivial $G$-action. We assume that:
\begin{enumerate}
    \item the $B^G$-module $B$ is free of finite rank and faithfully flat;
    \item we have $B \otimes_{B^G}B^{\natural} \simeq \oplus_{g \in G}B^{\natural}\cdot e_g$ (where $e_ge_h=\delta_{gh}e_g$ and $g(e_h)=e_{gh}$).
\end{enumerate}
In this case, the following result holds.
\begin{prop}
\label{prop taking invariants gives proj and iso}
    If $S$ is a $\Qp$-Banach algebra, endowed with a trivial $G$-action, and if $M$ is a finitely generated free $S \hat{\otimes}_{\Qp}B$-module, endowed with a semilinear action of $G$, then:
    \begin{enumerate}
        \item $M^G$ is a finitely generated projective $S \hat{\otimes}_{\Qp}B^G$-module;
        \item the map $(S \hat{\otimes}_{\Qp}B) \otimes_{S \hat{\otimes}_{\Qp}B^G}M^G \ra M$ is an isomorphism.
    \end{enumerate}
\end{prop}
\begin{proof}
    Let $\pi_G = \frac{1}{|G|}\sum_{g \in G}g \in B[G]$. If $N$ is a $B[G]$-module, we get a decomposition $N = \pi_G N \oplus \ker \pi_G$, and $N^G = \pi_GN$. In particular, $M=M^G\oplus \ker \pi_G$ and thus $M^G$ is a direct factor of the free module $M$ and thus is projective. For item 2 the proof is the same as in \cite[Prop. 2.2.1]{BC08}.
\end{proof}

\section{Locally analytic vectors}
We now recall some of the theory of locally and pro-analytic vectors, following the presentation of Emerton in \cite{Emer} and of Berger in \cite{Ber14MultiLa}.

Let $G$ be a $p$-adic Lie group, and let $W$ be a $\Qp$-Banach representation of $G$. Let $H$ be an open subgroup of $G$ such that there exist coordinates $c_1,\cdots,c_d : H \to \Zp$ giving rise to an analytic bijection $\cbf : H \to \Z_p^d$. We say that $w \in W$ is an $H$-analytic vector if there exists a sequence $\left\{w_{\kbf}\right\}_{\kbf \in \N^d}$ such that $w_{\kbf} \rightarrow 0$ in $W$ (for the cofinite filter) and such that $g(w) = \sum_{\kbf \in \N^d}\cbf(g)^{\kbf}w_{\kbf}$ for all $g \in H$. We let $W^{H-\an}$ be the space of $H$-analytic vectors. This space injects into $\cal{C}^{\an}(H,W)$, the space of all analytic functions $f : H \to W$.  Note that $\cal{C}^{\an}(H,W)$ is a Banach space equipped with its usual Banach norm, so that we can endow $W^{H-\an}$ with the induced norm, that we will denote by $||\cdot ||_H$. With this definition, we have $||w||_H = \sup_{\kbf \in \N^d}||w_{\kbf}||$ and $(W^{H-\an},||\cdot||_H)$ is a Banach space.

The space $\cal{C}^{\an}(H,W)$ is endowed by an action of $H \times H \times H$, given by
\[
((g_1,g_2,g_3)\cdot f)(g) = g_1 \cdot f(g_2^{-1}gg_3)
\]
and one can recover $W^{H-\an}$ as the closed subspace of $\cal{C}^{\an}(H,W)$ of its $\Delta_{1,2}(H)$-invariants,  where $\Delta_{1,2} : H \to H \times H \times H$ denotes the map $g \mapsto (g,g,1)$ (we refer the reader to \cite[§3.3]{Emer} for more details).

We say that a vector $w$ of $W$ is locally analytic if there exists an open subgroup $H$ as above such that $w \in W^{H-\an}$. Let $W^{\la}$ be the space of such vectors, so that $W^{\la} = \bigcup_{H}W^{H-\an}$, where $H$ runs through a fundamental system of open subgroup neighborhoods of $G$. The space $W^{\la}$ is naturally endowed with the inductive limit topology, so that it is an LB space. 

\begin{lemm}
\label{ringla}
If $W$ is a ring  such that $||xy|| \leq ||x|| \cdot ||y||$ for $x,y \in W$, then
\begin{enumerate}
  \item $W^{H-\an}$ is a ring, and $||xy||_H \leq||x||_H \cdot ||y||_H$ if $x,y \in W^{H-\an}$;
  \item if $w \in W^\times \cap W^{\la}$, then $1/w \in W^{\la}$. In particular, if $W$ is a field, then  $W^{\la}$ is also a field.
\end{enumerate}
\end{lemm}
\begin{proof}
See \cite[Lemm. 2.5]{Ber14SenLa}.
\end{proof}

It is often useful to choose a specific fundamental system of open neighborhoods of $G$: let $G_0$ be a compact open subgroup of $G$ which is $p$-valued and saturated (see \cite[\S 26 and 27]{schneider2011p} for the definition and proof of existence), with coordinates $\c$, and set $G_{n}=G^{p^{n}}=\left\{ g^{p^{n}}:g\in G_{0}\right\}$ for $n\in\N$. We say that such a system $(G_n)_{n\in\N}$ is a system of coordinates for $G$.

The normalization is such that for $w\in W^{G_{n}-\an}$ we can write
\[
g(w)=\sum_{\mathbf{k\in}\mathbb{N}^{d}}c(g)^{\mathbf{k}}w_{\mathbf{k}}
\]
for $g\in G_{n}$ and $\left\{ w_{\mathbf{k}}\right\} _{\mathbf{k}\in\N^{d}}$
with $p^{n\left|\mathbf{k}\right|}w_{\mathbf{k}}\rightarrow0$, and
the Banach norm is given by
\[
|\!|w|\!|_{G_{n}-\an}=\sup_{\mathbf{k}}|\!|p^{n\mathbf{k}}w_{\mathbf{k}}|\!|.
\]
It is easy to check if $w\in W^{G_{n}-\an}$ then $|\!|w|\!|_{G_{m}-\an}\leq|\!|w|\!|_{G_{m+1}-\an}$
for $m\geq n$ and $|\!|w|\!|_{G_{m}-\an}=|\!|w|\!|$
for $m\gg n$ (see \cite[Lemm. 2.4]{Ber14SenLa}).

In the case where $G=\Gamma_K$ is the Galois group of the cyclotomic extension $K_\infty/K$, the map $\log \chi_{\cycl} : \Gamma_K \to \Z_p^\times$ induces isomorphisms $\Gamma_n \simeq p^n\Z_p$ for $n \gg 0$, where $\Gamma_n = \Gal(K_\infty/K(\mu_{p^n}))$, so that the groups $\Gamma_n$ form such a fundamental system of open neighborhoods of $\Gamma_K$ for $n$ big enough.

Let $W$ be a Fréchet space whose topology is defined by a sequence $\left\{p_i\right\}_{i \geq 1}$ of seminorms. Let $W_i$ be the Hausdorff completion of $W$ at $p_i$, so that $W = \varprojlim\limits_{i \geq 1}W_i$. The space $W^{\la}$ can be defined, but as stated in \cite{Ber14MultiLa} and explained in \S 7 of \cite{Poyetonlocanaperiods}, this space is too small in general for what we are interested in, and so we give the following definition, following \cite[Def. 2.3]{Ber14MultiLa}:

\begin{defi}
If $W = \varprojlim\limits_{i \geq 1}W_i$ is a Fréchet representation of $G$, then we say that a vector $w \in W$ is pro-analytic if its image $\pi_i(w)$ in $W_i$ is locally analytic for all $i$. We let $W^{\pa}$ denote the set of all pro-analytic vectors of $W$. 
\end{defi}

We extend the definition of $W^{\la}$ and $W^{\pa}$ for LB and LF spaces respectively in the obvious way. 

\begin{prop}
\label{lainla and painpa}
Let $G$ be a $p$-adic Lie group, let $B$ be a Banach $G$-ring and let $W$ be a free $B$-module of finite rank, equipped with a compatible $G$-action. If the $B$-module $W$ has a basis $w_1,\ldots,w_d$ in which $g \mapsto \Mat(g)$ is a globally analytic function $G \to \GL_d(B) \subset M_d(B)$, then
\begin{enumerate}
\item $W^{H-\an} = \bigoplus_{j=1}^dB^{H-\an}\cdot w_j$ if $H$ is a subgroup of $G$;
\item $W^{\la} = \bigoplus_{j=1}^dB^{\la}\cdot w_j$.
\end{enumerate}
Let $G$ be a $p$-adic Lie group, let $B$ be a Fréchet $G$-ring and let $W$ be a free $B$-module of finite rank, equipped with a compatible $G$-action. If the $B$-module $W$ has a basis $w_1,\ldots,w_d$ in which $g \mapsto \Mat(g)$ is a pro-analytic function $G \to \GL_d(B) \subset M_d(B)$, then
$$W^{\pa} = \bigoplus_{j=1}^dB^{\pa}\cdot w_j.$$
\end{prop}
\begin{proof}
The part for Banach ring is proven in \cite[Prop. 2.3]{Ber14SenLa} and the one for Fréchet rings is proven in \cite[Prop. 2.4]{Ber14MultiLa}.
\end{proof}

For $\alpha \in \Delta$, let $G_\alpha$ be a $p$-adic Lie group. Let $G = \prod_{\alpha \in \Delta}G_\alpha$. This is also a $p$-adic Lie group, and if $(G_{\alpha,n})_{\alpha \in \Delta, n\in\N}$ are systems of coordinates of $G_{\alpha}, \alpha \in \Delta$, it is easy to see that the subgroups of $G$ defined by $(G_n = \prod_{\alpha \in \Delta}G_{\alpha,n})_{n\in\N}$ form a system of coordinates of $G$.

\begin{lemm}
\label{lemma locanamulti}
For $\alpha \in \Delta$, let $G_\alpha$ be a $p$-adic Lie group. Let $G = \prod_{\alpha \in \Delta}G_\alpha$, and let $(G_{\alpha,n})_{\alpha \in \Delta, n\in\N}$ be systems of coordinates of $G_{\alpha}, \alpha \in \Delta$. We have 
$$\cal{C}^{\an}(G_n,\Qp) \simeq \hat{\otimes}_{\Qp}^{\alpha \in \Delta}\cal{C}^{\an}(G_{\alpha,n},\Qp).$$
\end{lemm}
\begin{proof}
Suppose that $\mathbf{c}_{\alpha}:G_{\alpha,n}\to \Z^{d_{\alpha}}_p$ are analytic bijections, for $\alpha\in\Delta$ and $n\in\N$. 
These induce analytic bijections $\mathbf{c}:G_{n}\to \Z^{d_{\alpha_1}+\dots+d_{\alpha_{\delta}}}_p$, for $n\in\N$.
We have then that 
\begin{dmath*}\hat{\otimes}_{\Qp}^{\alpha \in \Delta}\cal{C}^{\an}(G_{\alpha,n},\Qp)\simeq \hat{\otimes}_{\Qp}^{\alpha \in \Delta}\cal{C}^{\an}\left(\Z^{d_{\alpha}}_p,\Qp\right)\simeq\hat{\otimes}_{\Qp}^{\alpha \in \Delta}\tate{\Qp}{x^{\alpha}_1,\dots,x^{\alpha}_{d_\alpha}}\simeq\tate{\Qp}{x^{\alpha_1}_1,\dots,x^{\alpha_1}_{d_\alpha},\dots,x^{\alpha}_1,\dots,x^{\alpha_{\delta}}_{d_{\alpha_{\delta}}}}\simeq \cal{C}^{\an}\left(\Z^{d_{\alpha_1}+\dots+d_{\alpha_{\delta}}}_p,\Qp\right)\simeq\cal{C}^{\an}(G_n,\Qp). \end{dmath*}
\end{proof}

\begin{coro}
\label{coro computes locanamulti}
For $\alpha \in \Delta$, let $G_\alpha$ be a $p$-adic Lie group, and let $B_\alpha$ be a Banach (or LB, or Fréchet, or LF) space, endowed with a continuous $\Qp$-linear action of $G_\alpha$ \footnote{In the case of Fréchet or LF spaces, we also assume that the $B_\alpha$ satisfy the same assumptions as in Lemma \ref{inv and tensors} and Remark \ref{invariants LF spaces}.}. Let $G = \prod_{\alpha \in \Delta}G_\alpha$ and let $B = \hat{\otimes}_{\Qp}^{\alpha \in \Delta}B_\alpha$, where the action of $G_\alpha$ on the tensor product is trivial on $B_\beta$ if $\alpha \neq \beta$. We have 
$$B^{G_n-\an} = \hat{\otimes}_{\Qp}^{\alpha \in \Delta}(B_\alpha)^{G_{\alpha,n}-\an},$$
$$B^{G-\la} = \hat{\otimes}_{\Qp}^{\alpha \in \Delta}(B_\alpha)^{G_\alpha-\la},$$
and
$$B^{G-\pa} = \hat{\otimes}_{\Qp}^{\alpha \in \Delta}(B_\alpha)^{G_\alpha-\pa}.$$
\end{coro}
\begin{proof}
By lemma \ref{lemma locanamulti}, we have 
$$\cal{C}^{\an}(G_n,\Qp) \simeq \hat{\otimes}_{\Qp}^{\alpha \in \Delta}\cal{C}^{\an}(G_{\alpha,n},\Qp).$$
Since $B^{G_n-\an} = (\cal{C}^{\an}(G_n,\Qp)\hat{\otimes}_{\Qp}B)^{G_n}$ where the action on the tensor product is diagonal and the action on $\cal{C}^{\an}(G_n,\Qp)$ is given by the $\Delta_{1,2}$-action, we obtain that
$$B^{G_n-\an} \simeq (\hat{\otimes}_{\Qp}^{\alpha \in \Delta}\cal{C}^{\an}(G_{\alpha,n},\Qp))\hat{\otimes}_{\Qp}B)^{G_n}.$$
Using the fact that $G_n = \prod_{\alpha \in \Delta}G_{\alpha,n}$ and lemma \ref{inv and tensors} successively for each of the $G_{\alpha,n}$, we obtain that $B^{G_n-\an} = \hat{\otimes}_{\Qp}^{\alpha \in \Delta}(B_\alpha)^{G_{\alpha,n}-\an}.$ The result for locally analytic vectors follows by taking the inductive limit over $n$, and the result for pro-analytic vectors follows by taking the projective limit over the Hausdorff completions defining the Fréchet (or LF) topology.
\end{proof}

\section{Classical and multivariable rings of periods}

\subsection{Classical rings of periods and cyclotomic $(\phi,\Gamma)$-modules}
\label{subsection classical rings}

Let $p$ be a prime and let $k$ be a perfect field of characteristic $p$. Let $F=W(k)[1/p]$ the field of fractions of its ring of integers $\O_F=W(k)$. This is a complete discrete valuation field of characteristic $0$ for the $p$-adic valuation $v_p$, with residue field $k$. We let $K$ be a finite totally ramified extension of $F$. We fix $\Kbar$ an algebraic closure of $K$ and we let $C$ denote the $p$-adic completion of $\Kbar$. Let $v_p$ denote the $p$-adic valuation on $C$ normalized so that $v_p(p)=1$. We let $F_\infty=F(\mu_{p^\infty})$ be the cyclotomic extension of $F$. If $L$ is a finite extension of $F$, we let, for $n \geq 1$, $L_n = L(\mu_{p^n})$ be the extension of $L$ generated by the $p^n$-th roots of unity, and let $L_\infty= \bigcup_{n \geq 1}L(\mu_{p^n})=L \cdot F_\infty$ be the cyclotomic extension of $L$. We let $H_L = \Gal(\Kbar/L_\infty)$ and $\Gamma_L = \Gal(L_\infty/L)$. Recall that the cyclotomic character $\chi_\cycl : \G_L \to \Z_p^\times$ factors through $\Gamma_L$ and identifies it with an open subset of $\Z_p^\times$.

Let $C^\flat = \varprojlim\limits_{x \to x^p}C$ denote the tilt of $C$ and let $\O_C^\flat = \varprojlim\limits_{x \to x^p}\O_C$ be the tilt of $\O_C$ as defined in \cite{scholzeperf}. Recall that the ring $\O_C^\flat$ is the ring of integers of $C^\flat$ for the valuation $v_{\E}$ induced by $v$ on $C^\flat$, and that $C^\flat$ is an algebraically closed complete field of characteristic $p$ endowed with a continuous action of $\G_K$ coming from the one on $C$. We let $\Atplus=W(\O_C^\flat)$ (which is also classically denoted by $\A_{\inf}$). 

Recall that there is a surjective $\G_K$-equivariant ring homomorphism 
$$\theta : \Atplus \ra \O_C$$
given by $\theta(\sum_{k \geq 0}p^k[x_k]) = \sum_{k \geq 0}p^kx_k^{(0)}$. The kernel of $\theta$ is principal, generated by $\xi = [\tilde{p}]-p$ where $\tilde{p} = (p,p^{1/p},\ldots) \in \O_C^\flat$. This map extends to a surjective $\G_K$-equivariant ring homomorphism $\theta : W(\O_C^\flat)[1/p] \ra C$. Recall that $\Bdrplus$ is defined as the completion for the $\ker(\theta)$-adic topology of $W(\O_C^\flat)[1/p]$. The power series defining $\log[\epsilon]$ converges in $\Bdrplus$ to an element $t$ that generates the maximal ideal $\ker(\theta : \Bdrplus \to C)$ of $\Bdrplus$, so that $\Bdr = \Bdrplus[1/t]$. Note that the action of $\G_{F}$ on $t$ is given by $g(t) = \chi_{\cycl}(g)\cdot t$.

We choose a sequence $\epsilon \in \O_C^{\flat}$ of compatible $p^n$-th roots of unity (with $\epsilon^{(1)} \neq 1$). Let $\overline{u} = \epsilon-1 \in \O_C^\flat$ and let $\E_{F}:=k(\!(\overline{u})\!) \subset C^\flat$. Let $\E = \E_{F}^{sep}$ be the separable closure of $\E_{F}$ inside $C^\flat$. The field $\E_{F}$ is left invariant by the action of $H_{F}$ so that we have a morphism $H_{F} \to \Gal(\E/\E_{F})$. By \cite[Thm. 3.2.2]{Win83}, it is actually an isomorphism. We also let $\E_K=\E^{H_K}$. Note that $\Gamma_K$ acts on $\E_K$, and that the action of $\G_{F}$ on $\overline{u}$ is given by $g(\overline{u})=(1+\overline{u})^{\chi_\cycl(g)}-1$. 

Let $\At = W(C^\flat)$ and let $u = [\epsilon]-1 \in \Atplus$. Let $\A_{F}$ be the $p$-adic completion of $\O_F(\!(u)\!)$ inside $\At$. This is a discrete valuation ring with residue field $\E_{F}$. Since
$$\phi(u) = (1+u)^p-1 \quad \textrm{and} \quad g(u)=(1+u)^{\chi_\cycl(g)}-1 \textrm{ if } g \in \G_{F},$$
the ring $\A_{F}$ and its field of fractions $\B_{F} := \A_{F}[1/p]$ are both stable by $\phi$ and $\G_{F}$.
We let $\Bt=\At[1/p]$.
If $L$ is a finite extension of $F$, we let $\Bt_L = \Bt^{H_L}$ and $\At_L = \At^{H_L}$. 

For $r > 0$, we define $\Bt^{\dagger,r}$ the subset of overconvergent elements of ``radius'' $r$ of $\Bt$, by

$$\Bt^{\dagger,r}=\left\{x = \sum_{n \gg -\infty}p^n[x_n] \textrm{ such that } \lim\limits_{k \to +\infty}v_{\E}(x_k)+\frac{pr}{p-1}k =+\infty \right\}$$

and we let $\Bt^\dagger = \bigcup_{r > 0}\Bt^{\dagger,r}$ be the subset of all overconvergent elements of $\Bt$. 

Let $\B_{F}^{\dagger,r}$ be the subset of $\B_{F}$ given by
$$\B_{F}^{\dagger,r}=\left\{\sum_{i \in \Z}a_iu^i, a_i \in F \textrm{ such that the } a_i \textrm{ are bounded and } \lim\limits_{i \to - \infty}v_p(a_i)+i\frac{pr}{p-1} = +\infty \right\},$$
and note that $\B_{F}^{\dagger,r} = \B_{F} \cap \Bt^{\dagger,r}$.  

Let $\B_{F}^\dagger = \bigcup_{r > 0}\B_{F}^{\dagger,r}$. By \S 2 of \cite{matsuda1995local}, this is a Henselian field, and its residue ring is still $\E_{F}$. Since $\B_{F}^\dagger$ is Henselian, if $L$ is a finite extension of $F$, there exists a finite unramified extension $\B_L^\dagger/\B_{F}^\dagger$ inside $\Bt$, of degree $f=[L_\infty:F_\infty]$ and whose residue field is $\E_L$. Therefore, there exists $r(K) > 0$ and elements $x_1,\ldots,x_f$ in $\B_L^{\dagger,r(L)}$ such that $\B_L^{\dagger,s} = \oplus_{i=1}^f \B_{F}^{\dagger,s}\cdot x_i$ for all $s \geq r(L)$. We let $\B_L$ be the $p$-adic completion of $\B_L^\dagger$ and we let $\A_L$ be its ring of integers for the $p$-adic valuation. One can show that $\B_L$ is a subfield of $\Bt$ stable under the action of $\phi$ and $\Gamma_K$ (see for example \cite[Prop. 6.1]{colmez2008espaces}). Let $\A$ be the $p$-adic completion of $\bigcup_{L/F}\A_L$, taken over all the finite extensions $L/\Qp$. Let $\B = \A[1/p]$. Note that $\A$ is a complete discrete valuation ring whose field of fractions is $\B$ and with residue field $\E$. Once again, both $\A$ and $\B$ are stable by $\phi$ and $\G_{F}$. Moreover, we have $\A^{H_K}=\A_K$ and $\B_K=\B^{H_K}$, so that $\A_K$ is a complete discrete valuation ring with residue field $\E_K$ and fraction field $\B_K = \A_K[1/p]$. If $L$ is a finite extension of $K$, then $\B_L/\B_K$ is an unramified extension of degree $[L_\infty:K_\infty]$ and if $L/K$ is Galois then so is $\B_L/\B_K$, and we have the following isomorphisms: $\Gal(\Bt_L/\Bt_K) = \Gal(\B_L/\B_K) = \Gal(\E_L/\E_K) = \Gal(L_\infty/K_\infty) = H_K/H_L$.

For $r \geq 0$, we define a valuation $V(\cdot,r)$ on $\Btplus[\frac{1}{[\overline{u}]}]$ by setting
$$V(x,r) = \inf_{k \in \Z}(k+\frac{p-1}{pr}v_{\E}(x_k))$$
for $x = \sum_{k \gg - \infty}p^k[x_k]$. If $I$ is a closed subinterval of $[0;+\infty[$, we let $V(x,I) = \inf_{r \in I}V(x,r)$. We then define the ring $\Bt^I$ as the completion of $\Btplus[1/[\overline{u}]]$ for the valuation $V(\cdot,I)$ if $0 \not \in I$, and as the completion of $\Btplus$ for $V(\cdot,I)$ if $I=[0;r]$. We will write $\Bt_{\mathrm{rig}}^{\dagger,r}$ for $\Bt^{[r,+\infty[}$ and $\Bt_{\mathrm{rig}}^+$ for $\Bt^{[0,+\infty[}$. We also define $\Bt_{\mathrm{rig}}^\dagger = \bigcup_{r \geq 0}\Bt_{\mathrm{rig}}^{\dagger,r}$. We also let $\At_{\rig}^{\dagger,r}$ be the ring of integers of $\Bt_{\rig}^{\dagger,r}$ for the valuation $V(\cdot,r)$.

We define $\At^{\dagger,r}$ by 

$$\At^{\dagger,r}=\left\{x = \sum_{n \geq 0}p^n[x_n] \in \At \textrm{ such that } V(x,r) \geq 0 \textrm{ and } \lim\limits_{k \to +\infty}v_{\E}(x_k)+\frac{pr}{p-1}k \geq 0 \right\}.$$

We let $\rho(r) = p^{-1/r}$ and we let $\At^{(0,\rho(r)]} = \At^{\dagger,r}[1/[\overline{u}]]$. We also let $\At^{\dagger,r}_K = (\At^{\dagger,r})^{H_K} = \At_K \cap \At^{\dagger,r}$ and $\At^{(0,\rho(r)]}_K = (\At^{(0,\rho(r)]})^{H_K} = \At_K \cap \At^{(0,\rho(r)]}$. We also define $\A^{\dagger,r} = \A \cap \At^{\dagger,r}$, $\A^{\dagger,r}_K =\A \cap \At^{\dagger,r}_K$ and $\A^{(0,\rho(r)]} = \A \cap \At^{(0,\rho(r)]}$. All of these rings are complete for the valuation $V(\cdot,r)$.

Let $I$ be a subinterval of $]1,+\infty[$ or such that $0 \in I$. Let $f(Y) = \sum_{k \in \Z}a_kY^k$ be a power series with $a_k \in F$ and such that $v_p(a_k)+k/\rho \to +\infty$ when $|k| \to + \infty$ for all $\rho \in I$. The series $f(u)$ converges in $\Bt^I$ and we let $\B_{F}^I$ denote the set of all $f(\pi)$ with $f$ as above. It is a subring of $\Bt_{F}^I$. 

We also write $\B_{\mathrm{rig},F}^{\dagger,r}$ for $\B_{F}^{[r;+\infty[}$. It is a subring of $\B_{F}^{[r;s]}$ for all $s \geq r$ and note that the set of all $f(u) \in \B_{\mathrm{rig},F}^{\dagger,r}$ such that the sequence $(a_k)_{k \in \Z}$ is bounded is exactly the ring $\B_{F}^{\dagger,r}$. Let $\B_{F}^{\dagger}=\cup_{r \gg 0}\B_{F}^{\dagger,r}$ which we call the Robba ring over $F$. 

For $n \geq 0$ we let $r_n:=p^{n-1}(p-1)$. 

If $L$ is a finite extension of $F$ and if $r(L) \leq \min(I)$, we let $\B_{L}^I$ be the completion of $\B_L^{\dagger,r(L)}$ for $V(\cdot,I)$, so that $\B_L^I=\oplus_{i=1}^f\B_{F}^I\cdot x_i$. 

We actually have a better description of the rings $\B_{\rig,K}^{\dagger,r}$ in general:

\begin{prop}
Let $L$ be a finite extension of $F$ and let $e_L=[L_\infty:F_\infty]$. 
\begin{enumerate}
\item There exists $u_L \in \A_L^{\dagger,r(L)}$ whose image modulo $p$ is a uniformizer of $\E_L$ and such that, for $r \geq r(L)$, every element $x \in \B_L^{\dagger,r}$ can be written as $x = \sum_{k \in \Z}a_ku_L^k$, where $a_k \in F'=W(\overline{k})[1/p] \cap L_\infty$, and the power series $\sum_{k \in \Z}a_kT^k$ is bounded on $\left\{p^{-1/e_Lr} \leq |T| < 1 \right\}$.
\item Let $\mathcal{H}^{\alpha}_{F'}(T)$ be the set of power series $\sum_{k\in \Z}a_kT^k$ where $a_k \in F'$ and such that, for all $\rho \in [\alpha;1[, \lim\limits_{k \to \pm \infty}|a_k|\rho^k=0$ and let $\alpha_L^r = p^{-1/e_Lr}$. Then the map $\mathcal{H}^{\alpha}_{F'}(T) \rightarrow \B_{\rig,L}^{\dagger,r}$ sending $f$ to $f(u_L)$ is an isomorphism.
\end{enumerate}
\end{prop} 
\begin{proof}
The first item is proved in \cite[Prop. 7.5]{colmez2008espaces} and the second one in \cite[Prop. 7.6]{colmez2008espaces}. Be careful that the notations for the rings and the normalizations of the valuations used in Colmez's paper are a bit different than ours. 
\end{proof}

\begin{defi}
A $(\phi,\Gamma_K)$-module $\D$ on $\A_K$ (resp. $\B_K$) is an $\A_K$-module of finite rank (resp. a finite dimensional $\B_K$-vector space) endowed with semilinear actions of $\Gamma_K$ and $\phi$ that commute with each other.

It is said to be étale if $1 \otimes \phi: \phi^*\D \to \D$ is an isomorphism (resp. if there exists a basis of $\D$ such that $\Mat(\phi) \in \GL_d(\A_K)$).
\end{defi}

If $V$ is a $p$-adic representation of $\G_K$, we set 
$$\D(V) = (\B \otimes_{\Qp}V)^{H_K}.$$ 
Note that $\D(V)$ is a $(\phi,\Gamma_K)$-module. Moreover, if $V$ is a $p$-adic representation of $\G_K$, then $\D(V)$ is étale and $(\B \otimes_{\B_K}\D(V))^{\phi=1}$ is canonically isomorphic to $V$ (see \cite[Prop. 1.2.6]{Fon90}). The functors $V \mapsto \D(V)$ and $\D \mapsto (\B \otimes_{\B_K}D)^{\phi=1}$ then induce an equivalence of tannakian categories between $p$-adic representations of $\G_K$ and étale $(\phi,\Gamma_K)$-modules over $\B_K$.

The following theorem is the main result of \cite{cherbonnier1998representations} and shows that every étale $(\phi,\Gamma_K)$-module is the base change to $\B_K$ of an overconvergent module:
\begin{theo}
\label{theo cherbonniercolmez}
If $\D$ is an étale $(\phi,\Gamma_K)$-module, then the set of free sub-$\B_K^\dagger$-modules of finite type stable by $\phi$ and $\Gamma_K$ admits a bigger element $\D^\dagger$ and one has $\D= \B_K \otimes_{\B_K^{\dagger}}\D^\dagger$. 
\end{theo}

In particular, if $V$ is a $p$-adic representation of $\G_K$, then there exists an étale $(\phi,\Gamma_K)$-module over $\B_K^\dagger$ which we will denote by $\D^\dagger(V)$ and such that $\D(V) = \B_K \otimes_{\B_K^{\dagger}}\D^\dagger(V)$. We let $\D_{\rig}^\dagger(V) = \B_{\rig,K}^\dagger \otimes_{\B_K^{\dagger}}\D^\dagger(V)$.

\subsection{Multivariable setting and first multivariable constructions}

Let $\Delta$ be a finite set, and let $\delta$ denote its cardinal. For each $\alpha \in \Delta$, we fix a finite extension $K_\alpha$ of $F$. We define $\G_{\Kunder,\Delta}:= \prod_{\alpha \in \Delta}\G_{K_\alpha}$, $H_{\Kunder,\Delta}:=\prod_{\alpha \in \Delta}H_{K_\alpha}$ and $\Gamma_{\Kunder,\Delta}:=\prod_{\alpha \in \Delta}\Gamma_{K_\alpha}$. 

For $\alpha \in \Delta$, we let $\G_{\Kunder,\alpha}$ denote the image of $\G_{K_\alpha}$ by the group homomorphism $\iota_\alpha : \G_{K_\alpha} \ra \G_{\Kunder,\Delta}$ mapping $g$ to the element whose component of index $\alpha$ is $g$ and whose other components are $1$. We define groups $H_{\Kunder,\alpha}$ and $\Gamma_{\Kunder,\alpha}$ in the same fashion. We let $\chi_\Delta$ denote the $\Delta$-cyclotomic character of $\G_{\Kunder,\Delta}$ with values in $(\Z_p^\times)^\delta$ by $\chi_{\Delta}= \prod_{\alpha \in \Delta}\chi_\cycl$. It factors through $\Gamma_{\Kunder,\Delta}$ and identifies it with an open subgroup of $(\Z_p^\times)^\delta$.  

Our goal is to generalize the construction of classical rings of $p$-adic periods in order to study $p$-adic representations of $\G_{\Kunder,\Delta}$, following and expanding upon \cite{ZabMulti0,PalZab21,BriChiaMaz21,KedCarZab}. In those papers except the last one, the fields $K_\alpha$ are taken to be the same, but we see no reason to restrict ourselves to this setting, and thus will work with this level of generality. 

The main way to construct rings of $p$-adic periods for products of Galois groups is to take some completion of the tensor product of $\delta$ copies of the classical rings of periods one wishes to consider. One tricky question is which ring our tensor products have to be taken over. 

In \cite{PalZab21} the fields $K_\alpha$ are all equal to $\Qp$ so that the authors are taking the tensor products over either $\Qp$ or $\Z_p$ (depending on the situation). In \cite{BriChiaMaz21} the fields $K_\alpha$ are all equal to the same $K$ which is a finite extension of $\Qp$, with residue field $k$, and the tensor products are taken over either $W(k)$ or $W(k)[1/p]$, which simplify the cohomology computations (over taking the tensor products over either $\Zp$ or $\Qp$). However, this means that the multivariable rings of periods they construct depend on $k$, and thus one would get different rings of de Rham periods depending on the fields $K_\alpha$ we chose. 

Because of this, we make the choice here to do everything over $\Qp$ (or $\Zp$) so that the construction of the rings of multivariable crystalline, semi-stable and de Rham periods will not depend on our choice of the $K_\alpha$s. Moreover, it extends naturally to our setting where we allow ourselves to consider distinct fields $K_\alpha$, for $\alpha\in\Delta$. Note that this was also the point of view in \cite{KedCarZab} in order to generalize the results of \cite{ZabMulti0} and \cite{PalZab21} for finite extensions of $\Qp$.

In particular, this means that our rings $\O_{C_\Delta}$ and $\B_{\dR,\Delta}^+$ will correspond to the particular case $k=\F_p$ of \cite{BriChiaMaz21}, so that we can apply their results and proofs because the constructions and definitions we're using here are specializations of theirs, by only considering tensor product over $\Zp$ (resp. $\Qp$) instead of the more general $W(k)$ (resp. $W(k)[1/p])$.

If $(L_\alpha)_{\alpha \in \Delta}$ is a set of subfields of $C$ such that $L_\alpha \supset K_\alpha$, we let $\O_{\Lunder_\Delta}$ be the $p$-adic completion of the tensor product $\otimes_{\Zp}^{\alpha \in \Delta}\O_{L_\alpha}$ and we let $\Lunder_\Delta = \O_{\Lunder_\Delta}[1/p]$. In order to simplify the notations, in the case where all the $L_\alpha$ are equal to $C$, we write $\O_{C_\Delta}$ and $C_\Delta$ respectively for $\O_{\Lunder_\Delta}$ and $\Lunder_\Delta$.

Note that $\O_{C_\Delta}$ and $C_\Delta$ are naturally endowed with an action of $\G_{\Kunder,\Delta}$, and that $C_\Delta$ comes equipped with the $p$-adic valuation $v_p$ coming from the one on $C$. 

Since $\O_{C}^\Delta/(p) \simeq (\O_C/(p))^{\otimes \Delta}$ where the tensor product is taken over $\F_p$ and since the Frobenius map on $\O_C/(p)$ is surjective, the $\Zp$-algebra $\O_{C_\Delta}$ is perfectoid 
and we can define its tilt $\O_{C_\Delta}^\flat$ by 
$$\O_{C_\Delta}^\flat = \varprojlim\limits_{x \to x^p}\O_{C_\Delta} = \{(x^{(0)},x^{(1)},\dots) \in \O_{C_\Delta}^{\N}~: (x^{(n+1)})^p=x^{(n)}\}.$$
This is a perfect $\Fp$-algebra endowed with an action of $\G_{\Kunder,\Delta}$ coming from the one on $\O_{C_\Delta}$ and it is complete for the valuation coming from the one on $\O_{C_{\Delta}}$. 

Let $(\O_C^\flat)^{\otimes \Delta} = \O_C^\flat \otimes_{\Fp} \cdots \otimes_{\Fp}\O_C^\flat$ where the copies of $\O_C^\Delta$ are indexed by $\Delta$. For $\alpha \in \Delta$, we let 
$$\tilde{p}_\alpha = 1 \otimes_{\F_p} \cdots \otimes_{\F_p} 1 \otimes_{\F_p} \tilde{p} \otimes_{\F_p} 1 \otimes_{\F_p} \cdots \otimes_{\F_p} 1$$
where $\tilde{p}$ is the factor of index $\alpha$, and let
$$\epsilon_\alpha = 1 \otimes_{\F_p} \cdots \otimes_{\F_p} 1 \otimes_{\F_p} \epsilon \otimes_{\F_p} 1 \otimes_{\F_p}\cdots \otimes_{\F_p} 1.$$

We let $I_{\tilde{p}}$ denote the ideal of $(\O_C^\flat)^{\otimes \Delta}$ generated by $\{\tilde{p}_\alpha\}_{\alpha \in \Delta}$. 

\begin{prop}
\label{prop OCDeltaflat completion of OCflatDelta}
The ring $(\O_C^\flat)^{\otimes \Delta}$ is $I_{\tilde{p}}$-adically separated, and we have a natural injective morphism of $k$-algebras $(\O_C^\flat)^{\otimes \Delta} \ra \O_{C_\Delta}^\flat$ which induces an isomorphism $(\O_C^\flat)^{\otimes \Delta}/I_{\tilde{p}} \simeq \O_{C_\Delta}^\flat$. Moreover, $\O_{C_\Delta}^\flat$ is isomorphic to the $I_{\tilde{p}}$-adic completion of $(\O_C^\flat)^{\otimes \Delta}$. 
\end{prop}
\begin{proof}
This is \cite[Lemma 4.2 and Proposition 4.3]{BriChiaMaz21}.
\end{proof}

We still denote by $\tilde{p}_\alpha$ and $\epsilon_\alpha$ the images of those elements \textit{via} the embedding $(\O_C^\flat)^{\otimes \Delta} \ra \O_{C_\Delta}^\flat$. 

For $\alpha \in \Delta$, we let $\phi_\alpha$ denote the Frobenius map of index $\alpha$ on $(\O_C^\flat)^{\otimes \Delta}$, defined on pure tensors by
$$\phi_\alpha(x_1 \otimes_{\F_p} \cdots \otimes_{\F_p} x_\delta) = x_1 \otimes_{\F_p} \cdots \otimes_{\F_p} x_{\alpha-1} \otimes_{\F_p} x_\alpha^p \otimes_{\F_p} x_{\alpha+1} \otimes_{\F_p} \cdots \otimes_{\F_p} x_\delta$$
and extended by $k$-linearity on $(\O_C^\flat)^{\otimes \Delta}$. Proposition \ref{prop OCDeltaflat completion of OCflatDelta} shows that this extends naturally to a map $\phi_\alpha : \O_{C_\Delta}^\flat \ra \O_{C_\Delta}^\flat$. We let $\phi = \phi_{\alpha_1} \circ \cdots \circ \phi_{\alpha_\delta}$ on $\O_{C_\Delta}^\flat$. We have that $\phi : (\O_C^\flat)^{\otimes \Delta} \ra (\O_C^\flat)^{\otimes \Delta}$ is equal to $x \mapsto x^p$ on pure tensors, so that it is equal to the usual absolute Frobenius on $(\O_C^\flat)^{\otimes \Delta}$ and thus its image in $\O_{C_\Delta}^\flat$ also is the absolute Frobenius.

We let $\A_{\mathrm{inf},\Delta}=W(\O_{C_\Delta}^\flat)$. For $\alpha \in \Delta$, we let $\xi_\alpha = [\tilde{p_\alpha}]-p \in \A_{\mathrm{inf},\Delta}$ (so this is equal to $-\xi_\alpha$ in the notations of \cite{BriChiaMaz21}), $\varpi_\alpha = [\epsilon_\alpha]-1 \in \A_{\mathrm{inf},\Delta}$ and $\omega_\alpha = \frac{\varpi_\alpha}{\phi_\alpha^{-1}(\varpi_\alpha)}$. By functoriality of Witt vectors, the maps $\phi_\alpha$ on $\O_{C_\Delta}^\flat$ give rise to maps that we still denote by $\phi_\alpha : \A_{\mathrm{inf},\Delta} \ra \A_{\mathrm{inf},\Delta}$, and we also let $\phi = \phi_{\alpha_1} \circ \cdots \circ \phi_{\alpha_\delta}$ (which also corresponds to the usual Frobenius on Witt vectors). Also by functoriality of Witt vectors, the $\O_F$-algebra $\A_{\mathrm{inf},\Delta}$ is endowed by an action of $\G_{K,\Delta}$. 

The map $\theta_\Delta : \A_{\mathrm{inf},\Delta} \ra \O_{C_\Delta}$ defined by $\theta_\Delta(\sum_{k \geq 0}p^k[x_k]) = \sum_{k \geq 0}p^kx_k^{(0)}$ is a surjective $\G_{\Kunder,\Delta}$-equivariant morphism of $\O_F$-algebras, and induces a surjective $\G_{\Kunder,\Delta}$-equivariant morphism of $F$-algebras $\theta_\Delta : \A_{\mathrm{inf},\Delta} [1/p] \ra C_\Delta$. 

The ideal $\ker(\theta_\Delta)$ of $\A_{\mathrm{inf},\Delta}$ is generated by $\{\xi_\alpha\}_{\alpha \in \Delta}$ (this is \cite[Corollary 4.4]{BriChiaMaz21}) and also by $\{\omega_\alpha\}_{\alpha \in \Delta}$ (one checks that $\xi_\alpha/\omega_\alpha$ is invertible in $\A_{\mathrm{inf},\Delta}$).

Note that our definition of $\A_{\mathrm{inf},\Delta}$ is not exactly the same as the one given in \S 4 of \cite{BriChiaMaz21}, as they define $\A_{\mathrm{inf},\Delta}$ as the completion of $W(\O_{C_\Delta}^\flat)$ for the $(p,\ker(\theta_\Delta))$-adic topology. The following lemma shows that $W(\O_{C_\Delta}^\flat)$ is already complete for this topology, so that their definition and ours coincide, and so that we can use the results they prove about $\A_{\mathrm{inf},\Delta}$ directly.

\begin{lemm}
\label{lemm Afindel p+ker complete}
    The ring $\A_{\inf,\Delta}=W(\O_{C_\Delta}^\flat)$ is $(p,\ker(\theta_\Delta))$-adically complete.
\end{lemm}
\begin{proof}
    First note that since $\ker(\theta_\Delta)$ is generated by $\{\xi_\alpha\}_{\alpha \in \Delta}$, we have $(p,\ker(\theta_\Delta)) = (p,[\tilde{p}_\alpha])_{\alpha \in \Delta}$. In order to simplify the notations, let us write $J = ([\tilde{p}_\alpha])_{\alpha \in \Delta}$, so that the ideals $p^nW(\O_{C_\Delta}^\flat)+J^m$ form a basis of open neighborhood of $0$ for the $(p,\ker(\theta_\Delta))$-adic topology.

    We have to prove that, if $(x_n)$ is a Cauchy sequence in $W(\O_{C_\Delta}^\flat)$ for the $\ker(\theta_\Delta)$-adic topology, it converges in $W(\O_{C_\Delta}^\flat)$. Let us write $x_n = \sum_{m \geq 0}p^m[x_{n,m}]$. Let $m \geq 1$. Since $J$ induces the $I_{\tilde{p}}$-adic topology on $\O_{C_\Delta}^\flat$ modulo $p$, the sequence $(x_{n,m})_{n \geq 1}$ is Cauchy for the $I_{\tilde{p}}$-adic topology, and thus converges in $\O_{C_\Delta}^\flat$ to an element $x_m$ as it is $I_{\tilde{p}}$-adically complete. Now if we let $x=\sum_{m \geq 0}p^m[x_m]$, the sequence $(x_n)$ converges to $x$ for the $(p,J)$-adic topology, and so we are done.
\end{proof}

\begin{prop}
\label{Ainfdel = padiccomp of xiadic}
    We have a natural isomorphism between $\A_{\inf,\Delta}$ and the $p$-adic completion of the $(\varpi_1,\cdots,\varpi_{\delta})$-completion of $\Atplus \otimes_{\Zp} \Atplus \otimes_{\Zp} \cdots \otimes_{\Zp}\Atplus$.
\end{prop}
\begin{proof}
    Let $A$ denote the $p$-adic completion of the $(\varpi_1,\cdots,\varpi_{\delta})$-completion of $\Atplus \otimes_{\Zp} \Atplus \otimes_{\Zp} \cdots \otimes_{\Zp}\Atplus$. By definition, $A$ is $p$-adically complete, and $A/pA$ is equal to the $(\overline{\varpi_1},\cdots,\overline{\varpi_{\delta}})$-completion of $\O_C^\flat \otimes_{\F_p}\O_C^\flat \otimes_{\Fp}\cdots\otimes_{\Fp}\O_C^\flat$, which is isomorphic to $\O_{C_\Delta}^\flat$ by Proposition \ref{prop OCDeltaflat completion of OCflatDelta}. By the universal property of Witt vectors, we get that $A \simeq W(\O_{C_\Delta}^\flat)$.
\end{proof}

\subsection{Multivariable rings and $(\phi,\Gamma)$-modules}

For $\r = (r_\alpha)_{\alpha \in \Delta}$ a set of nonnegative real numbers indexed by $\Delta$, we let $\A_{\Kunder,\Delta}^{\dagger,\r}:=\hat{\otimes}_{\Zp}^{\alpha \in \Delta}\A_{K_\alpha}^{\dagger,r_\alpha}, \At_{\Kunder,\Delta}^{\dagger,\r}:=\hat{\otimes}_{\Zp}^{\alpha \in \Delta}\At_{K_\alpha}^{\dagger,r_\alpha}, \At_{\Delta}^{\dagger,\r}:=\hat{\otimes}_{\Zp}^{\alpha \in \Delta}\At^{\dagger,r_\alpha}$ and $\A_{\Delta}^{\dagger,\r}:=\hat{\otimes}_{\Zp}^{\alpha \in \Delta}\A^{\dagger,r_\alpha}$ to be the completions of the tensor product, where the factors with supscripts $r_\alpha$ of the tensor product are endowed with the valuation $v_{r_\alpha}:=V(\cdot,r_\alpha)$ defined in \S \ref{subsection classical rings}. We let $v_{\r}$ denote the resulting valuation on those completed tensor products.

Once again, those rings are naturally endowed with an action of $\G_{\Kunder,\Delta}$, and for each $\alpha \in \Delta$, we have a map $\phi_\alpha$ induced by the map 
$$1 \otimes_{\Zp} \cdots \otimes_{\Zp} 1 \otimes_{\Zp} \phi_\alpha \otimes_{\Zp} 1 \otimes_{\Zp} \cdots \otimes_{\Zp} 1 $$ 
from those rings corresponding to $\r$ to the rings corresponding to ${\r}'$ where $\r'$ has the same components as $\r$ for $\beta \neq \alpha$, and ${r_\alpha}'=pr_\alpha$. 

If $\underline{I} = \prod_{\alpha \in \Delta}I_\alpha$ is a product of subintervals of $[0,+\infty[$, we let $\Bt_{\Delta}^{\underline{I}}:= \hat{\otimes}_{\Qp}^{\alpha \in \Delta}\Bt^{I_\alpha}$, $\B_{\Delta}^{\underline{I}}:= \hat{\otimes}_{\Qp}^{\alpha \in \Delta}\B^{I_\alpha}$, $\Bt_{\Kunder,\Delta}^{\underline{I}}:= \hat{\otimes}_{\Qp}^{\alpha \in \Delta}\Bt_{K_\alpha}^{I_\alpha}$ and $\B_{\Kunder,\Delta}^{\underline{I}}:= \hat{\otimes}_{\Qp}^{\alpha \in \Delta}\B_{K_\alpha}^{I_\alpha}$. 

If $\r = (r_\alpha)_{\alpha \in \Delta}$ and $\s = (s_\alpha)_{\alpha \in \Delta}$ are two sets of nonnegative real numbers indexed by $\Delta$, we say that $\r \leq \s$ if for all $\alpha \in \Delta$, we have $r_\alpha \leq s_\alpha$. We also write $\r > 0$ when $r_\alpha > 0$ for each $\alpha \in \Delta$. 

If $\underline{I} = \prod_{\alpha \in \Delta}I_\alpha$ is such that all the $I_\alpha$s are of the form $[r_\alpha,+\infty[$, we write $\Bt_{\rig,\Delta}^{\dagger,\r}$, $\B_{\rig,\Delta}^{\dagger,\r}$, $\Bt_{\rig,\Kunder,\Delta}^{\dagger,\r}$ and $\B_{\rig,\Kunder,\Delta}^{\dagger,\r}$ for $\Bt_{\Delta}^{\underline{I}}$, $\B_{\Delta}^{\underline{I}}$, $\Bt_{\Kunder,\Delta}^{\underline{I}}$ and $\B_{\Kunder,\Delta}^{\underline{I}}$ respectively.

We let $\Bt_{\rig,\Delta}^{\dagger}:=\bigcup_{\r > 0}\Bt_{\rig,\Delta}^{\dagger,\r}$, $\B_{\rig,\Delta}^{\dagger}:=\bigcup_{\r > 0}\B_{\rig,\Delta}^{\dagger,\r}$, $\Bt_{\rig,\Kunder,\Delta}^{\dagger}:=\bigcup_{\r > 0}\Bt_{\rig,\Kunder,\Delta}^{\dagger,\r}$ and $\B_{\rig,\Kunder,\Delta}^{\dagger}:=\bigcup_{\r > 0}\B_{\rig,\Kunder,\Delta}^{\dagger,\r}$.

\begin{lemm}
Let $\underline{I}=\prod_{\alpha \in \Delta}I_\alpha$. We have 
$$(\Bt_{\Delta}^{\underline{I}})^{H_{\Kunder}} = \Bt_{\Kunder,\Delta}^{\underline{I}},$$
and if for all $\alpha \in \Delta$, $r(K_\alpha) \leq \min(I_\alpha)$, then 
$$(\B_{\Delta}^{\underline{I}})^{H_{\Kunder}} = \B_{\Kunder,\Delta}^{\underline{I}}.$$
\end{lemm}
\begin{proof}
    This is lemma \ref{inv and tensors}.
\end{proof}

Our definitions of multivariable rings involve completing the (projective) tensor products of rings endowed with a norm, and it may not be completely clear in some cases that the seminorms induced on the noncompleted tensor products are actually norms. It may also not be clear that the injections for classical rings give rise to injections for the completed tensor products. The following results show that this is actually the case for the rings we consider:

\begin{lemm}
\label{lemma BdelI included in BdelJ}
If $\underline{J}= \prod_{\alpha \in \Delta}J_\alpha\subset \underline{I}=\prod_{\alpha \in \Delta}I_\alpha$ then the inclusion $\Bt^{I_\alpha} \subset \Bt^{J_\alpha}$ (resp. $\B^{I_\alpha} \subset \B^{J_\alpha}$, resp. $\Bt_{K_{\alpha}}^{I_\alpha} \subset \Bt_{K_{\alpha}}^{J_\alpha}$, resp. $\B_{K_{\alpha}}^{I_\alpha} \subset \B_{\Kunder}^{J_\alpha}$) gives rise to an inclusion $\Bt_{\Delta}^{\underline{I}} \subset \Bt_{\Delta}^{\underline{J}}$ (resp. $\B_{\Delta}^{\underline{I}} \subset \B_{\Delta}^{\underline{J}}$, resp. $\Bt_{\Kunder,\Delta}^{\underline{I}} \subset \Bt_{\Kunder,\Delta}^{\underline{J}}$, resp. $\B_{\Kunder,\Delta}^{\underline{I}} \subset \B_{\Kunder,\Delta}^{\underline{J}}$).
\end{lemm}
\begin{proof}
    This is a direct consequence of proposition \ref{prop tens Fréchet inj}.
\end{proof}

\begin{lemm}
\label{lemma BdelI Hausdorff}
    Let $\underline{I} = \prod_{\alpha \in \Delta}I_\alpha$. Then the topology on $\otimes_{\Qp}^{\alpha \in \Delta}\Bt^{I_\alpha}$ induced by the valuations $V(\cdot,I_\alpha)$ is separated.
\end{lemm}
\begin{proof}
    This follows from \cite[Proposition 17.4 ii.]{Schnei}.
    
    


\end{proof}

\begin{lemm}
\label{lemma Adagdeltar inj BdelI}
    Let $\r = (r_\alpha)_{\alpha \in \Delta}$ and let $\underline{I} = \prod_{\alpha \in \Delta}I_\alpha$ be such that for all $\alpha \in \Delta$, $r_\alpha \in I_\alpha$. Then the injections $\At^{\dagger,r_\alpha} \ra \Bt^{I_\alpha}$ induce an injective map $\At_\Delta^{\dagger,\r} \ra \Bt^{\underline{I}}$ which is bounded by $1$. 
\end{lemm}
\begin{proof}
    First note that because of lemma \ref{lemma BdelI included in BdelJ}, it suffices to prove the result in the case where $I_{\alpha} = [r_\alpha,r_\alpha]$ for all $\alpha \in \Delta$, and we thus assume now that this is the case. The rings $\At^{\dagger,r_\alpha}$, $\At^{I_\alpha}$ and $\Bt^{I_\alpha}$ are flat $\Zp$-algebras (because torsion free) so that the injections $\At^{\dagger,r_\alpha} \ra \At^{I_\alpha} \ra \Bt^{I_\alpha}$ give rise to injections
    $$\otimes_{\Zp}^{\alpha \in \Delta}\At^{\dagger,r_\alpha} \ra \otimes_{\Zp}^{\alpha \in \Delta}\At^{I_\alpha}\ra \otimes_{\Zp}^{\alpha \in \Delta}\Bt^{I_\alpha} = \otimes_{\Qp}^{\alpha \in \Delta}\Bt^{I_\alpha}.$$
    Because of the definition of the topology on the tensor product, and because the valuations $V(\cdot,I_\alpha)$ contain the valuations $v_{r_\alpha}$, the induced maps on the tensor products are bounded by $1$. Since the topology on $\otimes_{\Qp}^{\alpha \in \Delta}\Bt^{I_\alpha}$ is separated by lemma \ref{lemma BdelI Hausdorff}, so is the one on $\otimes_{\Zp}^{\alpha \in \Delta}\At^{\dagger,r_\alpha}$.
    
    We therefore get map $\At_\Delta^{\dagger,\r} \ra \Bt^{\underline{I}}$ which is bounded by $1$, and it remains to see that it is injective. Let us write $A$ for $\otimes_{\Zp}^{\alpha \in \Delta}\At^{\dagger,r_\alpha}$ endowed with the topology induced by the valuations $v_{r_\alpha}$, and $B = A[1/p] = A \otimes_{\Zp}\Qp$ equipped with the topology induced by this tensor product. By Proposition 4 of \cite[\S 2.1.7]{BoschAna}, we have an isometric isomorphism $\hat{B} \simeq \hat{A} \wotimes_{\Zp}\Qp$ so that $\hat{A} \ra \hat{B}$ is injective. To conclude, it suffices to go back to the definition of $B$ to notice that $\hat{B} \simeq \Bt^{\underline{I}}$ by using once again Proposition 4 of \cite[\S 2.1.7]{BoschAna} since the completion of $\At^{\dagger,r}[1/p]$ for $v_r$ is exactly $\Bt^{[r,r]}$.
\end{proof}

Our aim is now calculating the invariants for the actions of $H_{K,\Delta}$ on $\At_{\Delta}^{\dagger, \r}$ and $\A_{\Delta}^{\dagger, \r}$. We firstly need to establish some lemmas.
\begin{lemm}\label{topology on Atdagger}
    The topology induced by $V(\cdot,r)$ on $\At^{\dagger,r}$ coincides with the $\teich{\overline{\varpi}}$-adic topology (see also \cite[p. 6]{porat2022overconvergence}).
\end{lemm}
\begin{proof}
     It is clear that the $V(\cdot,r)$-topology coincides with the $([\varpibar],p)$-adic topology. The claim follows from the fact that $[\varpibar]$ divides $p^m$ in $\At^{\dagger,r}$, if $m\in\N$ is such that $V(p^m/[\u],r)>0$.
\end{proof}
\begin{lemm}\label{lemm regular principal ideal is closed}
    Let $A$ be a ring endowed with the $I$-adic topology, for $I$ a finitely generated ideal of $A$. Let $a$ be an element of $a$ such that $a$ is regular in $A$ and in $\hat{A}$. Then the ideal generated by $a$ is closed in $\hat{A}$ for the $I$-adic topology.   
\end{lemm}
\begin{proof}
    The ideal generated by $a$ in $A$ is a rank one free module over $A$, so that the map $a\hat{A} \rightarrow \hat{(aA)}$ is surjective (by \cite[\href{https://stacks.math.columbia.edu/tag/0315}{Lemma 0315}]{stacks-project}) and injective since $\hat{A}$ has no $a$-torsion. We conclude by using item $(2)$ of \cite[\href{https://stacks.math.columbia.edu/tag/0ARZ}{Lemma 0ARZ}]{stacks-project} which shows that $\hat{(aA)}$ is closed in $\hat{A}$. 
\end{proof}
\begin{lemm}
\label{lemma1 invar Atdagdelta}
    We have that
    $$(\At_{\Delta}^{\dagger, \r})^{H_{\Kunder}}=\At^{\dagger, r}_{\Kunder,\Delta},$$
    and
    $$(\A_{\Delta}^{\dagger, \r})^{H_{\Kunder}}=\A^{\dagger, r}_{\Kunder,\Delta},$$
\end{lemm}
Given these lemmas we can finally calculate the invariants.
\begin{proof}
    We only prove the statement for $\At_{\Delta}^{\dagger, \r}$, as the proof for $\A_{\Delta}^{\dagger, \r}$ is the same.
    We start by showing that \begin{equation*}(\At^{\dagger,r_{\alpha_1}}\wotimes_{\Zp}\dots\wotimes_{\Zp}\At^{\dagger,r_{\alpha_\delta}})^{H_K}=\At^{\dagger,r_{\alpha}}\wotimes_{\Zp}\dots\wotimes_{\Zp}(\At^{\dagger,r_{\alpha_\delta}})^{H_{K}}\end{equation*}
    We notice that
\begin{dmath*}\At^{\dagger,r_{\alpha_1}}\wotimes_{\Zp}\dots\wotimes_{\Zp}\At^{\dagger,r_{\alpha_\delta}}\cong\rm{lim}_{m\in\N}(\At^{\dagger,r_{\alpha_1}}/([\varpibar_{\alpha_1}])^m)\otimes_{\Z/p^{l_m}\Z}\dots\otimes_{\Z/p^{l_m}\Z}(\At^{\dagger,r_{\alpha_\delta}}/([\varpibar_{\alpha_\delta}])^m)\cong\rm{lim}_{m\in\N}(\At^{\dagger,r_{\alpha_1}}/([\varpibar_{\alpha_1}])^m)\otimes_{\Z/p^{l_m}\Z}\dots\otimes_{\Z/p^{l_m}\Z}(\At^{\dagger,r_{\alpha_{\delta-1}}}/([\varpibar_{\alpha_{\delta-1}}])^m)\wotimes_{\Zp}\At^{\dagger,r_{\delta}},
\end{dmath*}
    where $l_m$ is an integer such that $[\varpibar_{\alpha}]^m$ divides $p^{l_m}$ in $\At^{\dagger, r_{\alpha}}$, for $\alpha\in\Delta$ and $m\in\N$ (cfr. the proof of Lemma \ref{topology on Atdagger}).\\
    Since, for $\alpha\in\Delta$, $(\At^{\dagger,r_{\alpha}}/([\varpibar_{\alpha}])^m$ is a discrete, $p^{l_m}$-torsion $\Zp$-module, it is isomorphic to a direct sum $\oplus_{j\in J}\Z/p^j\Z$, for $j\le l_m$, by \cite{kaplansky2018infinite}, Theorem 22. Thus it suffices to show that 
    $$(\Z/p^m\Z)\wotimes_{\Zp}(\At^{\dagger,r_{\alpha_\delta}})^{H_K}=((\Z/p^m\Z)\wotimes_{\Zp}\At^{\dagger,r_{\alpha_\delta}})^{H_K},$$
    for $m\in\N$, but this follows from the fact that $p^m$ is a regular element invariant by $H_{K}$ in $\At^{\dagger,r_{\delta}}$, and that the ideal generated by $p^m$ is closed in $\At^{\dagger,r_{\alpha_\delta}}$ for the $[\varpibar_{\alpha_\delta}]$-adic topology by Lemma \ref{lemm regular principal ideal is closed},
   
    so that 
    $$(\Z/p^m\Z)\wotimes_{\Zp}\At^{\dagger,r_{\delta}}\simeq \At^{\dagger,r_{\delta}}/p^m.$$ 
    Applying this method inductively we can conclude.
    
\end{proof}

\begin{lemm}
    \label{lemma Adagdeltar inj Adagdelts}
Let $\r = (r_\alpha)_{\alpha \in \Delta}$ and $\s = (s_\alpha)_{\alpha \in \Delta}$ be two sets of nonnegative real numbers indexed by $\Delta$ such that $\r \leq \s$.
The natural injective map $\A^{\dagger,r_\alpha} \ra \A^{\dagger,s_\alpha}$ (resp. $\A_{K_\alpha}^{\dagger,r_\alpha} \ra \A_{K_\alpha}^{\dagger,s_\alpha}$, resp. $\At^{\dagger,r_\alpha} \ra \At^{\dagger,s_\alpha}$, resp. $\At_{K_\alpha}^{\dagger,r_\alpha} \ra \At_{K_\alpha}^{\dagger,s_\alpha}$) induces an injective map $\A_{\Delta}^{\dagger,\r} \ra \A_{\Delta}^{\dagger,\s}$ (resp. $\At_{\Kunder,\Delta}^{\dagger,\r} \ra \At_{\Kunder,\Delta}^{\dagger,\s}$, resp. $\At_{\Delta}^{\dagger,\r} \ra \At_{\Delta}^{\dagger,\s}$, resp. $\At_{\Kunder,\Delta}^{\dagger,\r} \ra \At_{\Kunder,\Delta}^{\dagger,\s}$).
\end{lemm}
\begin{proof}
We will just prove that statement for the map $\At_{\Delta}^{\dagger,\r} \ra \At_{\Delta}^{\dagger,\s}$. For the other maps the proof is analogous. 

First note that if for $\alpha \in \Delta$ we let $I_\alpha := [r_\alpha,s_\alpha]$ and $J_\alpha:=[s_\alpha,s_\alpha]$, we have an injection bounded by $1$: $\At_{\Delta}^{\dagger,\r} \subset \Bt^{\underline{I}}$, and an injection of $\Qp$-Banach spaces $\Bt^{\underline{I}} \subset \Bt^{\underline{J}}$ by lemmas \ref{lemma Adagdeltar inj BdelI}, \ref{lemma BdelI Hausdorff}, and \ref{lemma BdelI included in BdelJ}. To prove the statement, it remains to see that the image of $\At_{\Delta}^{\dagger,\r}$ by this composite map is contained in (the image in $\Bt^{\underline{J}}$ of) $\At_\Delta^{\dagger,\s}$. 

We let $\beta = \frac{\max_{\alpha \in \Delta}(s_\alpha)}{\min_{\alpha \in \Delta}(r_\alpha)}$. Let $(x_n)$ be a sequence of elements of $\otimes_{\Zp}^{\alpha \in \Delta}\At^{\dagger,r_\alpha}$, which converges in $\At_\Delta^{\dagger,\r}$ for $v_{\r}$. If $x_n = \sum_{i=1}^dx_{n,i}$, with $x_{n,i} = \otimes_{\alpha \in \Delta}x_{n,i,\alpha}$, we have by definition of the normed tensor product that for all $i$, $v_{\r}(x_{n,i}) \leq \beta v_{\s}(x_{n,i})$ using the fact that $rv_r \leq sv_s$ if $r \leq s$ (this is checked directly from the definition of $V(\cdot,r)$). Therefore, $v_{\r}(x_n) \leq \beta v_{\s}(x_n)$ (as the sup is taken over every possible way to write $x_n$ this way, and since the map $\otimes_{\Zp}^{\alpha \in \Delta}\At^{\dagger,r_\alpha} \ra \otimes_{\Zp}^{\alpha \in \Delta}\At^{\dagger,s_\alpha}$ is injective because each of the rings involved is a flat $\Zp$-algebra, each way of writing $x_n$ as such in $\otimes_{\Zp}^{\alpha \in \Delta}\At^{\dagger,r_\alpha}$ is a way to write $x_n$ as such in $\otimes_{\Zp}^{\alpha \in \Delta}\At^{\dagger,s_\alpha}$). This implies that the sequence $(x_n)$ converges in $\At_{\Delta}^{\dagger,\s}$.
\end{proof}

Let $\boldsymbol{\varpi}:=\prod_{\alpha \in \Delta}\varpi_\alpha$. We let $\A_{\Kunder,\Delta}^{\dagger}:=\bigcup_{\r > 0}\A_{\Kunder,\Delta}^{\dagger,\r}[1/\boldsymbol{\varpi}]$, $\At_{\Kunder,\Delta}^{\dagger}:=\bigcup_{\r > 0}\At_{\Kunder,\Delta}^{\dagger,\r}[1/\boldsymbol{\varpi}]$, $\A_{\Delta}^{\dagger}:=\bigcup_{\r > 0}\A_{\Delta}^{\dagger,\r}[1/\boldsymbol{\varpi}]$, $\At_{\Delta}^{\dagger}:=\bigcup_{\r > 0}\At_{\Delta}^{\dagger,\r}[1/\boldsymbol{\varpi}]$. 

We now define $\A_{\Kunder,\Delta}$ (resp. $\At_{\Kunder,\Delta}$ resp. $\A_{\Delta}$ resp. $\At_{\Delta}$) to be the $p$-adic completion of $\A_{\Kunder,\Delta}^\dagger$ (resp. $\At_{\Kunder,\Delta}^\dagger$ resp. $\A_{\Delta}^\dagger$ resp. $\At_{\Delta}^\dagger$). 

We let $\Bt_{\Kunder,\Delta} = \At_{\Kunder,\Delta}[1/p]$ and $\Bt_\Delta=\At_\Delta[1/p]$. As above, all these rings are naturally endowed with commuting actions of $\G_{\Kunder,\Delta}$ and $(\phi_\alpha)_{\alpha \in \Delta}$.

\begin{lemm}
\label{lemma dagger padicsep}
    The rings $\A_{\Kunder,\Delta}^{\dagger,\r},\At_{\Kunder,\Delta}^{\dagger,\r}, \A_{\Delta}^{\dagger,\r}$ and $\At_{\Delta}^{\dagger,\r}$ are $p$-adically separated.
\end{lemm}
\begin{proof}
    The arguments are the same in all four cases. By construction, those rings are separated for $v_{\r}$. If $x$ is in any of those rings, $x \neq 0$, then $v_{\r}(x)$ is bounded above. Since $v_{\r}(p^n)=n$, this implies that there exists $N \geq 0$ such that for $n \geq N$, $p^n$ does not divide $x$ (because by construction $v_{\r}$ is $\geq 0$ on those completed tensor products). This finishes the proof.
\end{proof}

\begin{coro}
Let $\r = (r_\alpha)_{\alpha \in \Delta}$. The natural injective map $\A^{\dagger,r_\alpha} \ra \A$ (resp. $\A_{K_\alpha}^{\dagger,r_\alpha} \ra \A_{K_\alpha}$ resp. $\At^{\dagger,r_\alpha} \ra \At$ resp. $\At_{K_\alpha}^{\dagger,r_\alpha} \ra \At_{K_\alpha}$) induce an injective map $\A_{\Delta}^{\dagger,\r} \ra \A_{\Delta}$ (resp. $\At_{\Kunder,\Delta}^{\dagger,\r} \ra \At_{\Kunder,\Delta}$ resp. $\At_{\Delta}^{\dagger,\r} \ra \At_{\Delta}$ resp. $\At_{\Kunder,\Delta}^{\dagger,\r} \ra \At_{\Kunder,\Delta}$).    
\end{coro}
\begin{proof}
    This follows from Lemma \ref{lemma Adagdeltar inj Adagdelts}.
\end{proof}

In \cite{PalZab21}, Pal and Zabradi define multivariable overconvergent rings and multivariable Robba rings as follows: first they define $\cal{R}_\Delta:= \bigcup_{\underline{\rho} \in (0,1)^\Delta}\cal{R}_\Delta^{(\underline{\rho},\underline{1})}$ as  the ascending union of the rings of multivariable power series
$$\cal{R}_\Delta^{(\underline{\rho},\underline{1})} := \left\{\sum_{\i \in \Z^\Delta}a_{\i}\mathbf{X}^{\underline{i}}, a_{\underline{i}} \in \Qp, \textrm{ convergent on } B^{(\underline{\rho},\underline{1})} \right\},$$
where $\mathbf{X}^{\underline{i}} = \prod_{\alpha \in \Delta}X_\alpha^{i_\alpha}$ and the polyannulus $B^{(\underline{\rho},\underline{1})}$ for the tuple $\underline{\rho} = (\rho_\alpha)_{\alpha \in \Delta} \in (0,1)^{\Delta}$ is defined as
$$B^{(\underline{\rho},\underline{1})}:=\left\{\mathbf{X} = (X_\alpha)_{\alpha \in \Delta} \in C^\Delta \textrm{ such that } \rho_\alpha < |X_\alpha|_p < 1 \textrm{ for all } \alpha \in \Delta   \right\}.$$

For each tuple $\underline{1} > \underline{\sigma} > \underline{\rho}$, we can define the $\underline{\sigma}$-norm on $\cal{R}_\Delta^{(\underline{\rho},\underline{1})}$ by $|\sum_{\i \in \Z^\Delta}a_{\i}\mathbf{X}^{\underline{i}}|_{\underline{\sigma}} = \sup_{\underline{i}}|a_{\underline{i}}|_p\prod_{\alpha \in \Delta}\sigma_\alpha^{i_\alpha}.$ 

Then they define 
$$\cal{E}_{\Delta}^\dagger := \{f \in \cal{R}_\Delta, \limsup_{\underline{\sigma} \to \underline{1}}|f|_{\underline{\sigma}} < \infty \}$$
and 
$$\O_{\cal{E}_\Delta}^\dagger:=\{f \in \cal{R}_\Delta, \limsup_{\underline{\sigma} \to \underline{1}}|f|_{\underline{\sigma}} \leq 1 \}.$$

Note that their $X_\alpha$ correspond to our $\varpi_\alpha$ and we won't make a distinction in what follows between the two, as the following result holds. 

\begin{lemm}
\label{lemma PalZab=ours Robba}
We have a natural isomorphism between $\cal{R}_\Delta$ and $\B_{\rig,\underline{\Qp},\Delta}^\dagger$.
\end{lemm}
\begin{proof}
     We have that $$\cal{R}^{(\rho,1)}=\rm{lim}_{n\in\N}\tate{\Qp}{\frac{x^{-1}}{\rho+1/n},\frac{x}{1-1/n}}=\rm{lim}_{n\in\N}\tate{\Qp}{\frac{y}{\rho+1/n},\frac{x}{1-1/n}}/(xy-1).$$ Thus 
     \tiny
     $$\cal{R}^{(\rho_{1},1)}\wotimes_{\Qp}\dots\wotimes_{\Qp}\cal{R}^{(\rho_{\delta},1)}\simeq\rm{lim}_{n\in\N}\tate{\Qp}{\frac{y_1}{\rho_{1}+1/n},\frac{x_1}{1-1/n}}/(x_1y_1-1)\wotimes_{\Qp}\dots\wotimes_{\Qp}\tate{\Qp}{\frac{y_{\delta}}{\rho_{\delta}+1/n},\frac{x_{\delta}}{1-1/n}}/(x_{\delta}y_{\delta}-1).$$
     \normalsize
     But since ideals in Tate algebras are closed we get
     $$\cal{R}^{(\rho_{1},1)}\wotimes_{\Qp}\dots\wotimes_{\Qp}\cal{R}^{(\rho_{\delta},1)}\simeq\rm{lim}_{n\in\N}\tate{\Qp}{\frac{x_1^{-1}}{\rho+1/n},\frac{x_{1}}{1-1/n},\dots,\frac{x_{\delta}^{-1}}{\rho+1/n},\frac{x_{\delta}}{1-1/n}}$$
     that is $\cal{R}_{\Delta}^{(\underline{\rho},\mathbf{1})}$. From this the claim follows.
\end{proof}
\begin{lemm}\label{lemm power serie description of Atdagger}
    We have a canonical isomorphism
    \begin{equation}\label{eqn power serie description of Atdagger}\A_{\Qp,\Delta}^{\dagger,\underline{r}}\cong \cal{S},\end{equation}
    where $\cal{S}$ is the set 
    $$\left\{f=\sum_{\underline{i}\in\Z^{\Delta}}a_{\underline{i}}\mathbf{X}^{\underline{i}}:a_{\underline{i}}\in\Zp,f \ \rm{converges} \ \rm{in} \ B^{[\underline{\rho},\underline{1})}, |f|_{\underline{\rho}}\le 1\right\}.$$
\end{lemm}
\begin{proof}
    The one variable case is \cite[Proposition II.2.1]{cherbonnier1998representations}.\\
    We prove the multivariable case using the universal property of the completed tensor product (see \cite[ Proposition 2.1.7]{BoschAna}).
    Note that we have a bounded map
    $$\A_{\Qp,\Delta}^{\dagger,r_1}\times\dots\times\A_{\Qp,\Delta}^{\dagger,r_{\delta}}\xrightarrow{\phi}\cal{S},$$
    given by
    $$\phi:(\sum_{i_{1}}a_{i_1}X_1^{i_1}\dots, \sum_{i_{\delta}}a_{i_{\delta}}X_{\delta}^{i_\delta} )\mapsto \sum_{\underline{i}\in\Z^{\Delta}}a_{i_1}\dots a_{i_{\delta}}\mathbf{X}^{\underline{i}}.$$ Note that such a map is bounded because $|a_{i_1}\dots a_{i_{\delta}}\mathbf{X}^{\underline{i}}|_{\underline{\rho}}=|a_{i_1}X_1^{i_1}|_{\rho_1}\dots |a_{i_{\delta}}X_1^{i_{\delta}}|_{\rho_{\delta}}.$
    Let $M$ be a Banach module over $\Zp$ and let 
    $$\A_{\Qp,\Delta}^{\dagger,r_1}\times\dots\times\A_{\Qp,\Delta}^{\dagger,r_{\delta}}\xrightarrow{\psi} M$$ be a bounded multilinear map. Then $\psi$ factors uniquely through $\phi$ as $\psi'\circ\phi$,
    where 
    $$\psi':\sum_{\underline{i}\in\Z^{\Delta}}a_{\underline{i}}\mathbf{X}^{\underline{i}}\mapsto \sum_{\underline{i}\in\Z^{\Delta}}a_{\underline{i}}\psi(X_1^{i_1},\dots X_{\delta}^{i_{\delta}}).$$
    note that $\psi'$ is well defined and bounded because, since $\psi$ is bounded, there exists a constant $C\in\R$ such that $$\norm{a_{\underline{i}}\psi(X_1^{i_1},\dots X_{\delta}^{i_{\delta}})}_M\le C|a_{}|_{\Zp}|X_1^{i_1}|_{\rho_1}\dots |X_{\delta}^{i_{\delta}}|_{\rho_{\delta}}=C|a_{\underline{i}}\mathbf{X}^{\underline{i}}|_{\underline{\rho}},$$
    for each $\underline{i}\in\Z^{\Delta}$.
    Thus we obtain \eqref{eqn power serie description of Atdagger}.
\end{proof}
\begin{prop}
\label{prop rings of PalZab=ours}
    Through the identification between $\cal{R}_\Delta$ and $\B_{\rig,\underline{\Qp},\Delta}^\dagger$ given by Lemma \ref{lemma PalZab=ours Robba}, we have $\O_{\cal{E}_\Delta}^\dagger = \A_{\underline{\Qp},\Delta}^\dagger$ and $\O_{\cal{E}_\Delta} = \A_{\underline{\Qp},\Delta}$. 
\end{prop}
\begin{proof}
    Proposition 3.1.2 of \cite{PalZab21} shows that the $p$-adic completion of $\O_{\cal{E}_\Delta}^\dagger$ is isomorphic to $\O_{\cal{E}_\Delta}$ so that with our definition of $\A_{\underline{\Qp},\Delta}$ as the $p$-adic completion of $\A_{\underline{\Qp},\Delta}^\dagger$ it suffices to prove that $\O_{\cal{E}_\Delta}^\dagger = \A_{\underline{\Qp},\Delta}^\dagger$.

    Since we have an inclusion $\A_{\underline{\Qp},\Delta}^{\dagger,\r} \subset \B_{\rig,\underline{\Qp},\Delta}^{\dagger,\r}$  for any $\r \geq 0$ (this follows from Lemma \ref{lemm power serie description of Atdagger}), we obtain an injection (through the identification between $\cal{R}_\Delta$ and $\B_{\rig,\underline{\Qp},\Delta}^\dagger$) $\A_{\underline{\Qp},\Delta}^{\dagger,\r}[1/\boldsymbol{\varpi}] \ra \cal{R}_{\Delta}^{(\underline{\rho}(\r),\underline{1})}$. Moreover, we have $v_{\r}(a_{\i}\boldsymbol{\varpi}^{\i}) = v_p(a_{\i}) + \sum_{\alpha \in \Delta}i_\alpha v_{r_\alpha}(\varpi_\alpha)$ 
    so that if $x \in \A_{\underline{\Qp},\Delta}^{\dagger,\r}$ then $|x|_{\boldsymbol{\rho}} \leq 1$ for $\boldsymbol{\rho} = (p^{-v_{r_\alpha}(\varpi_\alpha)})_{\alpha \in \Delta}$.
    Using the fact that $\A_{\underline{\Qp},\Delta}^{\dagger,\r} \subset \A_{\underline{\Qp},\Delta}^{\dagger,\s}$ for $\s \geq \r$ by lemma \ref{lemma Adagdeltar inj Adagdelts}, and using the fact that for any $k \geq 0$, $\limsup_{\underline{\sigma} \to \underline{1}}|\boldsymbol{\varpi}^k|_{\underline{\sigma}} = 1$, we obtain that if $x \in \A_{\underline{\Qp},\Delta}^\dagger$ then its image in $\cal{R}_\Delta$ is in $\O_{\cal{E}_\Delta}^\dagger$.

    It remains to see that any element of $\O_{\cal{E}_\Delta}^\dagger$ can be written as an element of $\A_{\underline{\Qp},\Delta}^{\dagger,\r}[1/\boldsymbol{\varpi}]$ for some $\r \geq 0$. We fix an order on $\Delta$, and for $\underline{i} \in \Z^\Delta$, we let $h(\underline{i}) \in \Delta$ denote the smallest $\alpha \in \Delta$ such that $\min_{\alpha \in \Delta}i_\alpha = i_{h(\i)}$. We also let $j(\underline{i}) = \min_{\alpha \in \Delta}i_\alpha$. 
    Let $x \in \O_{\cal{E}_\Delta}^\dagger, x = \sum_{\underline{i} \in \Z^\Delta}a_{\underline{i}}\boldsymbol{\varpi}^{\underline{i}}$. Recall that by lemma 3.1.1 of \cite{PalZab21}, this implies that all the $a_{\i}$ belong to $\Zp$. 
    
    By definition of $\O_{\cal{E}_\Delta}^\dagger$, if $C > 0$ then there exists $\epsilon > 0$ such that for all $\underline{\rho} \in (1-\epsilon,1)^\Delta$, we have $x \in \cal{R}_\Delta^{(\underline{\rho},\underline{1})}$ and $|x|_{\underline{\rho}} < p^C$. Let $\underline{\rho}=(\rho)^\Delta \in (1-\epsilon,1)^\Delta$ and let $r > 0$ be such that $\rho = p^{-1/r}$. Since $\underline{\rho} \in (1-\epsilon,1)^\Delta$, we have that for all $\underline{i} \in \Z^\Delta$, $v(a_{\underline{i}})+\sum_{\alpha \in \Delta}i_\alpha r > -C$. We write $x = x^+ + \sum_{\alpha \in \Delta}x_\alpha$ where 
    $$x^+ = \sum_{\underline{i} \in \Z^\Delta, \underline{i} \geq 0}a_{\underline{i}}\boldsymbol{\varpi}^{\underline{i}}$$
    and
    $$x_\alpha = \sum_{\underline{i} \in \Z^\Delta, \underline{i} \not \geq 0, h(\underline{i}) =\alpha}a_{\underline{i}}\boldsymbol{\varpi}^{\underline{i}}.$$
    The fact that for all $\underline{i} \in \Z^\Delta$, $v(a_{\underline{i}})+\sum_{\alpha \in \Delta}i_\alpha r > -C$ translates into the fact that for all $\underline{i} \in \Z^\Delta$, $v(a_{\i})+\sum_{\alpha \in \Delta}i_\alpha r > -C$. If $\underline{i} \not \geq 0$, this translates into the fact that $v(a_{\underline{i}})+j(\underline{i})(\delta r) > -C$ (since $j(\underline{i})$ is negative in this case). In particular, if $\underline{k}$ is such that $v_{\delta r}(\varpi_\alpha^{k_\alpha}) > C$ then the $a_{\underline{i}}\boldsymbol{\varpi}^{\underline{i}+\underline{k}}$ belong to $\otimes_{\Zp}^{\alpha \in \Delta}\A_{\underline{\Qp}}^{\dagger,\delta r}$ for all $\underline{i} \in \Z^\Delta$. Moreover, since $x \in \cal{R}_\Delta^{(\underline{\rho},\underline{1})}$, this implies that the $a_{\underline{i}}\boldsymbol{\varpi}^{\underline{i}}$ go to zero for $v_{\underline{r}}$ (and thus for $v_{\delta\underline{r}}$) for the cofinite filter. Therefore, $x^+$ and each of the $x_\alpha$ belong to $\boldsymbol{\varpi}^{-\underline{k}}\A_{\underline{\Qp},\Delta}^{\dagger,\delta\underline{r}}$, and thus we have $x \in \boldsymbol{\varpi}^{-\underline{k}}\A_{\underline{\Qp},\Delta}^{\dagger,\delta\underline{r}}$, which is what we wanted.
\end{proof}

In particular, this shows that all our definition are compatible with those of Pal and Zabradi in \cite{PalZab21} when $\Kunder = \underline{\Qp}$ and naturally extend to any $\Kunder$ where each $K_\alpha$ is a finite extension of $\Qp$. Moreover, it follows directly from proposition \ref{prop rings of PalZab=ours} and from the definitions of the objects involved that our ring $\At_{\Delta}^\dagger$ contains the ring $\O_{\hat{\cal{E}_{\Delta}^{\mathrm{ur}}}}^\dagger$ defined in \S 3.2 of \cite{PalZab21}. 

Finally, in \cite[\S 2.2]{KedCarZab}, rings $\tilde{\O}_{\mathcal{E}_{\Delta}}$, $\O_{\mathcal{E}_{\Delta}}$, $\tilde{\O}_{\mathcal{E}_{\Delta}}^\dagger$ and $\O_{\mathcal{E}_{\Delta}}^{\dagger}$ are constructed. The definitions of those rings differ from the ones of \cite{PalZab21}, so we now prove that those rings coincide respectively with our rings $\At_{\Kunder,\Delta}, \A_{\Kunder,\Delta}, \At_{\Kunder,\Delta}^{\dagger}$ and $\A_{\Kunder,\Delta}^\dagger$ (in particular, this statement alongside proposition \ref{prop rings of PalZab=ours} shows that their rings and constructions do extend the ones of \cite{ZabMulti0} and \cite{PalZab21}).

\begin{prop}
\label{prop rings of KedCarZab=ours}
    The rings $\tilde{\O}_{\mathcal{E}_{\Delta}}$, $\O_{\mathcal{E}_{\Delta}}$, $\tilde{\O}_{\mathcal{E}_{\Delta}}^\dagger$ and $\O_{\mathcal{E}_{\Delta}}^{\dagger}$ defined in \cite[\S 2.2]{KedCarZab} coincide respectively with our rings $\At_{\Kunder,\Delta}, \A_{\Kunder,\Delta}, \At_{\Kunder,\Delta}^{\dagger}$ and $\A_{\Kunder,\Delta}^\dagger$. 
\end{prop}
\begin{proof}
    In \cite[\S 2.2]{KedCarZab}, Carter, Kedlaya and Zabradi show that their rings $\tilde{\O}_{\cal{E}_\Delta}$ and $\O_{\cal{E}_\Delta}$ satisfy the following:
    $$\tilde{\O}_{\cal{E}_\Delta} = \varprojlim_{m}(\At_{K_{\alpha_1}}/p^m\At_{K_{\alpha_1}})\hat{\otimes}_{\Zp}\cdots \wotimes_{\Zp}(\At_{K_{\alpha_\delta}}/p^m\At_{K_{\alpha_\delta}})$$
    and 
    $$\O_{\cal{E}_\Delta} = \varprojlim_{m}(\A_{K_{\alpha_1}}/p^m\A_{K_{\alpha_1}})\hat{\otimes}_{\Zp}\cdots \wotimes_{\Zp}(\A_{K_{\alpha_\delta}}/p^m\A_{K_{\alpha_\delta}})$$
    where the completed tensor products are taken for the $(\varpi_{\alpha})_{\alpha \in \Delta}$-adic topology.
    
    In order to prove that our ring $\At_{\Kunder,\Delta}$ (resp. $\A_{\Kunder,\Delta}$) is isomorphic to their ring $\tilde{\O}_{\cal{E}_\Delta}$ (resp. $\O_{\cal{E}_\Delta}$), it suffices to prove that for $m \geq 1$ and $\r \geq \underline{r_0}$, $(\At^{\dagger,\r}_{\Kunder,\Delta}[1/\boldsymbol{\varpi}])/p^m$ is isomorphic to $(\At_{K_{\alpha_1}}/p^m\At_{K_{\alpha_1}})\hat{\otimes}_{\Zp}\cdots \wotimes_{\Zp}(\At_{K_{\alpha_\delta}}/p^m\At_{K_{\alpha_\delta}})$ (the statement and the proof are the same in the non tilde case). First note that for any $m \geq 1$ and for any $r \geq \frac{p-1}{p}$, $\At_K^{\dagger,r}[1/\varpi]/p^m \simeq \At_K/p^m$. 

    Let us prove first that $(\At^{\dagger,\r}_{\Kunder,\Delta}[1/\boldsymbol{\varpi}])/p^m$ is isomorphic to the $I$-adic completion of $(\At_{K_{\alpha_1}}^{\dagger,r_1}/p^m\otimes_{\Zp} \cdots \otimes_{\Zp} \At_{K_{\alpha_\delta}}^{\dagger,r_\delta}/p^m)$, where $I = (\varpi_{\alpha})_{\alpha \in \Delta}$. It suffices to prove that the ideal generated by $p^m$ is closed in $\At^{\dagger,\r}_{\Kunder,\Delta}$, which follows from Lemma \ref{lemm regular principal ideal is closed}.

    Let $r_1,\ldots,r_\delta$ such that for all $\alpha \in \Delta$, $r_\alpha \geq \frac{p-1}{p}$ (this is to ensure that $\frac{\varpi_\alpha}{[\varpi_\alpha]}$ is a unit in $\At^{\dagger,r_\alpha}$) and let $m \geq 1$. By \cite[\S 2.1.7, Prop. 4]{BoschAna}, we have that the $v_{\r}$-completion of $(\At_{K_{\alpha_1}}^{\dagger,r_1}/p^m\otimes_{\Zp} \cdots \otimes_{\Zp} \At_{K_{\alpha_\delta}}^{\dagger,r_\delta}/p^m)$ is isomorphic to $(\At_{K_{\alpha_1}}^{\dagger,r_1}/p^m\wotimes_{\Zp} \cdots \wotimes_{\Zp} \At_{K_{\alpha_\delta}}^{\dagger,r_\delta}/p^m)$, where the completion is taken with respect to the induced valuations $v_{r_\alpha}$ on $\At_{K_{\alpha}}^{\dagger,r_\alpha}/p^m$ (note that we used that $(\At_{K_{\alpha_1}}^{\dagger,r_1}/p^m\otimes_{\Zp} \cdots \otimes_{\Zp} \At_{K_{\alpha_\delta}}^{\dagger,r_\delta}/p^m) \simeq (\At_{K_{\alpha_1}}^{\dagger,r_1}\otimes_{\Zp} \cdots \otimes_{\Zp} \At_{K_{\alpha_\delta}}^{\dagger,r_\delta})/p^m$ using for example \cite[Exercise 1.3, Chapter 1]{qing2006algebraic}). 

    Since localizations commute with quotients, it remains to check that the $(\varpi_\alpha)_{\alpha \in \Delta}$-adic topology on $(\At_{K_{\alpha_1}}^{\dagger,r_1}/p^m\otimes \cdots \otimes \At_{K_{\alpha_\delta}}^{\dagger,r_\delta}/p^m)$ coincides with the $v_{\r}$-topology, which is straightforward as the denominators are bounded below. 

    For the rings $\tilde{\O}_{\mathcal{E}_{\Delta}}^\dagger$ and $\O_{\mathcal{E}_{\Delta}}^{\dagger}$, we proceed as in the proof of Proposition \ref{prop rings of PalZab=ours}: thanks to the identification between $\tilde{\O}_{\cal{E}_\Delta}$ and $\At_{\Kunder,\Delta}$, we check that the condition defining $\tilde{\O}_{\cal{E}_\Delta}^\dagger$ in \cite[Notation 2.10]{KedCarZab} implies that if $x \in \tilde{\O}_{\cal{E}_\Delta}^\dagger$, then it belongs to $\At_{\Kunder,\Delta}^{\dagger}$ (this is exactly the same proof as in the proof of Proposition \ref{prop rings of PalZab=ours}). For the converse, we check that an element in $\At_{\Kunder,\Delta}^{\dagger,\r}$ defined as a limit of elements of the tensor product $\At_{K_{\alpha_1}}^{\dagger,r_{\alpha_1}} \otimes_{\Zp} \cdots \otimes_{\Zp} \At_{K_{\alpha_\delta}}^{\dagger,r_{\alpha_\delta}}$ that converges for $v_{\r}$ defines an element of $\tilde{\O}_{\cal{E}_{\Delta}}$ with bounded $\rho$-norm in the sense of \cite[Notation 2.6]{KedCarZab} for some $\rho > 0$, and thus defines an element of $\tilde{\O}_{\cal{E}_{\Delta}}^\dagger$.
\end{proof}

\begin{rema}\label{rmk AtDelta as Witt vectors}
Let $C^{\flat}_{\Delta}$ be $\O_{C,\Delta}^{\flat}[1/\overline{\boldsymbol{\varpi}}]$. We have that $\At_{\Delta}$ is isomorphic to $W\left( C^{\flat}_{\Delta}\right)$, as they are strict $p$-rings with the same residue perfect algebra (see \cite[Theorem 1.1.8]{kedlaya2015new}). This is because 
$$\At_{\Delta}/p\simeq \At_{\Delta}^{\dagger}[1/\boldsymbol{\varpi}]/p\simeq C^{\flat}_{\Delta}.$$
In particular every element in $\At_{\Delta}$ can be written as 
a sum
$$\sum_{i\in\N}\teich{c_i}p^i,$$
where $c_i\in C^{\flat}_{\Delta}$ (see also \cite[p. 1337]{KedCarZab}). Note that if $|\cdot|'$ is the norm on $C^{\flat}_{\Delta}$, if $|c_i|'$ is bounded in $i$, then 
$$\sum_{i\in\N}\teich{c_i}p^i\in \At_{\Delta}^{\dagger}.$$
Moreover we have that if $\phi_{\Delta}$ is the Frobenius on $C^{\flat}_{\Delta}$ satisfies $|\phi(\cdot)|'=(|\cdot|')^{p}$ by definition. 
\end{rema}

Following \cite{ZabMulti0} and \cite{KedCarZab}, we make the following definition:

\begin{defi}
Let $A$ be any of $\A_{\Kunder,\Delta}, \A_{\Kunder,\Delta}^\dagger, \At_{\Kunder,\Delta}, \At_{\Kunder,\Delta}^\dagger$ and $B=A[1/p]$.

A $(\phi_{\Delta},\Gamma_{\Kunder,\Delta})$-module $\D$ on $A$ (resp. $B$) is a finitely presented $A$-module endowed with semilinear actions of $\Gamma_{\Kunder,\Delta}$ and $(\phi_\alpha)_{\alpha \in \Delta}$ that commute with one another.   

It is said to be étale if the maps $1 \otimes_{\Zp} \phi_\alpha: \phi_\alpha^*\D \to \D$ are isomorphisms for $\alpha \in \Delta$ (resp. if it is of the form $\D_0[1/p]$ for some projective finitely generated étale $(\phi,\Gamma_{\Kunder,\Delta})$-module $\D_0$ over $A$).
\end{defi}

\begin{rema}
\label{remark CarKedZab bounded action}
    Contrary to what happens in the classical setting, the étaleness condition in the multivariable setting is also related to the $\Gamma_{\Kunder,\Delta}$-action. Thanks to \cite[Theorem 6.19]{KedCarZab} (and Remark 6.20 of ibid), this condition can be relaxed by asking that the action of $\Gamma_{\Kunder,\Delta}$ is bounded, in the sense that the action of $\Gamma_{\Kunder,\Delta}$ carries $\D_0$ in the previous definition into $p^{-m}\D_0$ for some nonnegative integer $m$, when $A$ is either $\A_{\Kunder,\Delta}$ or $\A_{\Kunder,\Delta}^\dagger$.
\end{rema}

The following is \cite[Thm. 6.15 and 6.16]{KedCarZab} in the case where the $K_\alpha$ are all equal, and Theorem 8.2 and 8.3 of ibid. when the $K_\alpha$ are any finite extensions of $\Qp$:
\begin{theo}
\label{theo overconvergence PalZab and KedCarZab}
    The category of continuous representations of $\G_{\Kunder,\Delta}$ on finite free $\Zp$-modules (resp. dimensional $\Qp$-vector spaces) is equivalent to the category of projective étale $(\phi_\Delta,\Gamma_{\Kunder,\Delta})$-modules over each of the rings $\A_{\Kunder,\Delta}, \A_{\Kunder,\Delta}^\dagger, \At_{\Kunder,\Delta}, \At_{\Kunder,\Delta}^\dagger$ (resp. $\B_{\Kunder,\Delta}, \B_{\Kunder,\Delta}^\dagger, \Bt_{\Kunder,\Delta}, \Bt_{\Kunder,\Delta}^\dagger$).
\end{theo}

\subsection{Multivariable crystalline and semistable rings}
Recall (\cite{fontaine1994corps}) that classically the ring $\A_{\crys}$ is defined as the $p$-adic completion of the divided power envelope of $\A_\inf$ with respect to $\ker(\theta)$ and then one defines $\B_{\crys}^+ := \A_\crys^+[1/p]$ and $\B_\crys:=\B_\crys^+[1/t]$ (as $t$ belongs to $\B_\crys$). One could thus mimick this construction in the multivariable setting, by taking the $p$-adic completion of the divided power envelope of $\A_{\inf,\Delta}$ with respect to $\ker(\theta_\Delta)$. Here however we decide to follow our philosophy behind the constructions of the various multivariable rings of periods we have defined. First, recall \cite[\S III.2]{Col98} that one can replace the classical ring $\B_\crys^+$ by the ring $\B_\max^+ = \Bt^{[0,r_0]}$ and $\B_\crys$ by $\B_\max := \B_\max^+[1/t]$, which does not change anything for the study of $p$-adic representations because of the fact that $\phi(\B_\max) \subset \B_\crys \subset \B_\max$. One of the main reason behind this change is that the topology on $\B_\max$ is much nicer than the one on $\B_\crys$ (namely, as explain in section III.2 of \cite{Col98}, the topology on $\B_\crys^+$ induced by the one on $\B_\crys$ is not the natural topology on $\B_\crys^+$). Moreover, $\B_\max^+$ is a $p$-adic Banach space and thus is completely adapted to our point of view and constructions. Recall also that $\cap_{n\in\N} \phi^n(\B_\max^+) = \cap_{n\in\N} \phi^n(\B_\crys^+) = \Btrigplus$ \cite[\S III.2]{Col98}. 

We define 
\[\B_{\max,\Delta}^+:=\B_\max^+ \wotimes_{\Qp} \cdots \wotimes_{\Qp} \B_\max^+\]
as the $p$-adic completion of the tensor product of $\delta$ copies of $\B_\max^+$ (note that this is the same as taking the completed tensor product in the category of $p$-adic Banach spaces). For $\alpha \in \Delta$ we let 
\[t_\alpha = 1 \otimes_{\Qp} \cdots \otimes_{\Qp} 1 \otimes_{\Qp} t \otimes_{\Qp} 1 \otimes_{\Qp} \cdots \otimes_{\Qp} 1 \in \B_\max^+ \otimes_{\Qp} \cdots \otimes_{\Qp} \B_\max^+\]
where $t$ is the factor of index $\alpha$, and we stille denote by $t_\alpha$ its image in $\B_{\max,\Delta}^+$. We let $t_\Delta = \prod_{\alpha \in \Delta}t_\alpha \in \B_{\max,\Delta}^+$ and we let $\B_{\max,\Delta} = \B_{\max,\Delta}^+[1/t_\Delta]$. 

Recall that we have defined Fréchet rings $\Bt_{\rig,\Delta}^{\dagger,\r}$, $\Bt_{\rig,\Kunder,\Delta}^{\dagger,\r}$ and $\B_{\rig,\Kunder,\Delta}^{\dagger,\r}$, and LF rings $\Bt_{\rig,\Delta}^{\dagger}$, $\Bt_{\rig,\Kunder,\Delta}^{\dagger}$ and $\B_{\rig,\Kunder,\Delta}^{\dagger}$.

Recall \cite[\S 2.4]{Ber02} that classically one defines rings $\Bt_{\log}^{\dagger,r}$ by $\Bt_{\log}^{\dagger,r}:=\Bt_{\rig}^{\dagger,r}[Y]$ and $\Bt_\log^+$ by $\Bt_{\log}^+:=\Bt_{\rig}^+[Y]$, endowed with an action of $\G_K$ and of the Frobenius given by
\[g(Y) = Y+\log([\frac{g(\tilde{p})}{\tilde{p}}]) \quad \textrm{ and } \phi(Y)=pY\]
using the fact that if $x \in \O_C^\flat$ is such that $v(x^{(0)}-1) \geq 1$ then the series 
$$\log[x]:= \sum_{n \geq 1}(-1)^{n-1}\frac{([x]-1)^n}{n}$$
converges in $\Btrigplus$ (cf proposition 2.23 of \cite{Ber02}). Intuitively, one has to think of $Y$ as $\log[\tilde{p}]$, and this intuition can be made explicit as follows: if we fix the $p$-adic logarithm by putting $\log p=0$, then the power series 
\[\log([\tilde{p}])= \log(\frac{[\tilde{p}]}{p}) = -\sum_{n \geq 1}\frac{(1-[\tilde{p}]/p)^n}{n}\]
converges in $\Bdrplus$ and so we get an injective map $\iota_n : \Bt_{\rig}^{\dagger,r_n}[Y] \ra \Bdrplus$ extending $\iota_n : \Bt_{\rig}^{\dagger,r_n} \ra \Bdrplus$ by $\iota_n(Y) = p^{-n}\log[\tilde{p}]$ (cf proposition 2.25 of \cite{Ber02}). 

Mimicking this construction, for $\alpha \in \Delta$ we let $Y_\alpha$ be a variable, and we define 
\begin{equation*}\Bt^{\dagger}_{\mathrm{log},\Delta}=\mathbf{B}^{\dagger}_{\mathrm{rig},\underline{K}}[Y_\alpha]_{\alpha \in \Delta}\end{equation*}
and
\begin{equation*}\mathbf{B}^{\dagger}_{\mathrm{log},\Delta,\underline{K}}=\mathbf{B}^{\dagger}_{\mathrm{rig},\underline{K}}[Y_\alpha]_{\alpha \in \Delta}.\end{equation*}
endowed with an action of $\G_{\Kunder,\Delta}$ and $(\phi_\alpha)_{\alpha \in \Delta}$ given by
\[g(Y_\alpha) = Y_\alpha + \log([\frac{g(\tilde{p}_\alpha)}{\tilde{p}_\alpha}] \quad \textrm{ and } \phi_\beta(Y_\alpha) = p^{\delta_{\alpha\beta}}Y_\alpha\]
where the series defining $\log([\frac{g(\tilde{p}_\alpha)}{\tilde{p}_\alpha}]$ converges in $\Bt_{\rig,\alpha}^+$. 

Once again we have injective maps $\iota_{\n}: \Bt_{\log,\Delta}^{\dagger,\r_{\n}} \ra \BdrplusDel$ sending $Y_\alpha$ to $p^{-n_\alpha}\log([\tilde{p_\alpha}])$ for $\alpha \in \Delta$. 

Finally, we also define $\Bt_{\log,\Delta}^+:=\Bt_{\rig,\Delta}^+[Y_\alpha]_{\alpha \in \Delta}$, $\B_{\st,\Delta}^+:=\B_{\max,\Delta}^+[Y_\alpha]_{\alpha \in \Delta}$ and $\B_{\st,\Delta}:=\B_{\st,\Delta}^+[1/t_\Delta]$. 

For $V$ a $p$-adic representation of $\G_{\Kunder,\Delta}$, we say that $V$ is crystalline if the map
$$(V\otimes_{\Qp}\B_{\max,\Delta})^{\G_{\Kunder,\Delta}}\wotimes_{K_{\Delta}}\B_{\max,\Delta}\to V\wotimes_{\Qp} \B_{\max,\Delta},$$
induced by the inclusion
$$(V\otimes_{\Qp}\B_{\max,\Delta})^{\G_{\Kunder,\Delta}}\subset V\wotimes_{\Qp} \B_{\max,\Delta}$$
is an isomorphism, and is semistable if the map
$$(V\otimes_{\Qp}\B_{\st,\Delta})^{\G_{\Kunder,\Delta}}\wotimes_{K_{\Delta}}\B_{\st,\Delta}\to V\wotimes_{\Qp} \B_{\st,\Delta},$$
induced by the inclusion
$$(V\otimes_{\Qp}\B_{\st,\Delta})^{\G_{\Kunder,\Delta}}\subset V\wotimes_{\Qp} \B_{\st,\Delta}$$
is an isomorphism.

Note that we have the following result:

\begin{pro}\label{rigplus in rig}
    The morphisms 
    \begin{equation*}
        \BtlogplusDel\rightarrow\BtlogDel
    \end{equation*}
    and
    \begin{equation*}
        \BtlogplusDel\rightarrow \B_{\st,\Delta}^+
    \end{equation*}
   induced by the inclusion factor by factor is a continous equivariant injection.
\end{pro}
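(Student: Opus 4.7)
The plan is to reduce the proposition to the classical one-variable case and then apply Proposition~\ref{prop tens Fréchet inj} factor by factor. Since the source and targets are polynomial rings in the same indeterminates $(Y_\alpha)_{\alpha \in \Delta}$ over $\Bt_{\rig,\Delta}^+$, $\Bt_{\rig,\Delta}^\dagger$ and $\B_{\max,\Delta}^+$ respectively, and the morphisms act as $Y_\alpha \mapsto Y_\alpha$, it suffices to establish the coefficient-level injections
\begin{equation*}
\Bt_{\rig,\Delta}^+ \hookrightarrow \Bt_{\rig,\Delta}^\dagger \quad \mbox{and} \quad \Bt_{\rig,\Delta}^+ \hookrightarrow \B_{\max,\Delta}^+
\end{equation*}
and to check that they are continuous and $\G_{\Kunder,\Delta}$- and $\phi_\alpha$-equivariant. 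Once established, the polynomial ring extension automatically preserves injectivity (comparing coefficients), continuity, and equivariance, given the compatibility of the actions on each $Y_\alpha$.

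For the coefficient injections, I would start from the classical one-variable injections $\Bt_{\rig}^+ = \Bt^{[0,+\infty[} \hookrightarrow \Bt^{[r,+\infty[} = \Bt_{\rig}^{\dagger,r}$ and $\Bt_{\rig}^+ \hookrightarrow \Bt^{[0,r_0]} = \B_{\max}^+$, which are continuous, $\G_F$-equivariant, and $\phi$-compatible by the very definitions of these rings as completions of $\Btplus[1/[\overline{u}]]$ or of $\Btplus$ with respect to the valuations $V(\cdot,I)$. Since each $\Bt_{\rig}^{\dagger,r}$ and $\Bt_{\rig}^+$ is Fréchet (a projective limit of the Banach rings $\Bt^{[r,s]}$) and $\B_{\max}^+$ is Banach, the hypotheses of Proposition~\ref{prop tens Fréchet inj} are satisfied. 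Iterating that proposition $\delta$ times (one tensor factor at a time) yields injections $\Bt_{\rig,\Delta}^+ \hookrightarrow \Bt_{\rig,\Delta}^{\dagger,\r}$ for each $\r > 0$ and $\Bt_{\rig,\Delta}^+ \hookrightarrow \B_{\max,\Delta}^+$. The first injection then composes with the canonical map $\Bt_{\rig,\Delta}^{\dagger,\r} \to \Bt_{\rig,\Delta}^\dagger$ into the LF colimit; injectivity passes to the colimit because the transition maps $\Bt_{\rig,\Delta}^{\dagger,\r} \hookrightarrow \Bt_{\rig,\Delta}^{\dagger,\s}$ for $\r \leq \s$ are themselves injective by Lemma~\ref{lemma BdelI included in BdelJ} applied factor by factor.

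Continuity and equivariance are then automatic: continuity because the injections are obtained by completing continuous maps of (projective) tensor products, and equivariance because the actions of $\G_{\Kunder,\Delta}$ and of the $\phi_\alpha$ are defined factor by factor on both sides through the same classical formulas and act identically on each $Y_\alpha$. The main technical input is really the Fréchet-injectivity statement of Proposition~\ref{prop tens Fréchet inj}; beyond that, the argument is formal and I do not anticipate any serious obstacle.
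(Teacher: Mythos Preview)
Your proposal is correct and takes essentially the same approach as the paper: the paper's proof simply says ``This follows easily from Proposition 1.1.26 of \cite{Emer}'', which is precisely Proposition~\ref{prop tens Fréchet inj}, and you have spelled out in detail how that proposition is applied factor by factor after reducing to the coefficient rings. Your added remarks on continuity, equivariance, and the passage to the LF colimit via Lemma~\ref{lemma BdelI included in BdelJ} are all correct elaborations of what the paper leaves implicit.
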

\begin{proof}
    This follows easily from Proposition 1.1.26 of \cite{Emer}.
\end{proof}
Let $K$ be a finite extension over $\Qp$ and let $F$ be its maximal unramified subextension.\\
Using Lemma \ref{inv and tensors} and the fact that \begin{equation*}F=(\Btrigplus)^{\G_{K}}\subset (\B_{\st})^{\G_{K}}=F\end{equation*} the following corollary is immediate.
\begin{cor}
    We have \begin{equation*}(\BtlogplusDel)^{\G_{\underline{K}}}=(\B_{\st,\Delta})^{\G_{\underline{K}}}=F_1\otimes_{\Qp}\dots\otimes_{\Qp}F_{\delta}.\end{equation*}
\end{cor}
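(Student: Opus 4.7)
The plan is to sandwich both invariant spaces between $F_1\otimes_{\Qp}\cdots\otimes_{\Qp}F_{\delta}$ on both sides, using the classical input $(\Btrigplus)^{\G_K}=(\B_{\st})^{\G_K}=F$ and the factor-by-factor invariants corollary (Corollary \ref{cor invariant of frechet factor by factor}, with its LF-version from Remark \ref{invariants LF spaces}).

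First I would compute $(\Bt_{\rig,\Delta}^+)^{\G_{\Kunder,\Delta}}$ directly: by definition $\Bt_{\rig,\Delta}^+=\hat{\otimes}_{\Qp}^{\alpha\in\Delta}\Btrigplus$ is a Fréchet completed tensor product of Banach representations, with $(\Btrigplus)^{\G_{K_\alpha}}=F_\alpha$ classical. Applying Corollary \ref{cor invariant of frechet factor by factor} slot-by-slot (with $\goth{G}_\alpha=\G_{K_\alpha}$, which acts trivially on the $\beta$-factors for $\beta\neq\alpha$) gives
\[
(\Bt_{\rig,\Delta}^+)^{\G_{\Kunder,\Delta}}=\hat{\otimes}_{\Qp}^{\alpha\in\Delta}F_\alpha=F_1\otimes_{\Qp}\cdots\otimes_{\Qp}F_{\delta},
\]
where the second equality holds because each $F_\alpha/\Qp$ is finite-dimensional. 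Since $\Bt_{\rig,\Delta}^+\subseteq\BtlogplusDel$, this already yields $F_1\otimes\cdots\otimes F_{\delta}\subseteq(\BtlogplusDel)^{\G_{\Kunder,\Delta}}$. Combined with the equivariant injection $\BtlogplusDel\hookrightarrow\B_{\st,\Delta}^+\hookrightarrow\B_{\st,\Delta}$ of Proposition \ref{rigplus in rig}, we get the chain
\[
F_1\otimes_{\Qp}\cdots\otimes_{\Qp}F_{\delta}\subseteq(\BtlogplusDel)^{\G_{\Kunder,\Delta}}\subseteq(\B_{\st,\Delta})^{\G_{\Kunder,\Delta}}.
\]

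The proof will then close the sandwich by establishing $(\B_{\st,\Delta})^{\G_{\Kunder,\Delta}}\subseteq F_1\otimes\cdots\otimes F_{\delta}$. For this I would identify $\B_{\st,\Delta}$ as an LF completed tensor product $\hat{\otimes}_{\Qp}^{\alpha}\B_\st$. The ingredients $Y_\alpha$ and $t_\alpha$ by construction each live in the $\alpha$-th tensor slot; since $\B_\st=\B_{\max}[Y]$ and inverting $t_\Delta=\prod_\alpha t_\alpha$ amounts to inverting each $t_\alpha$ separately, the presentation $\B_{\st,\Delta}=\B_{\max,\Delta}^+[Y_\alpha][1/t_\Delta]$ can be rewritten as the ascending union $\bigcup_{\underline n,\underline d}t_\Delta^{-|\underline n|}\B_{\max,\Delta}^+\langle Y_\alpha\rangle_{\deg\leq\underline d}$, a strict LF union of Fréchet completed tensor factors. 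Applying Remark \ref{invariants LF spaces} together with the classical fact $(\B_\st)^{\G_{K_\alpha}}=F_\alpha$ to each factor then gives $(\B_{\st,\Delta})^{\G_{\Kunder,\Delta}}=F_1\otimes_{\Qp}\cdots\otimes_{\Qp}F_{\delta}$, closing the sandwich and proving both asserted equalities simultaneously.

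The main obstacle will be the final identification of $\B_{\st,\Delta}$ with an LF completed tensor product of copies of $\B_\st$ in a way that satisfies the hypotheses of Corollary \ref{cor invariant of frechet factor by factor} and Remark \ref{invariants LF spaces}: one must check that the polynomial variables $Y_\alpha$ and the inverted element $t_\Delta$ distribute naturally over the tensor factors, and that the filtrations by $Y$-degree and $t$-denominator order produce a strict equivariant LF structure compatible with the factor-by-factor Galois action. Once this bookkeeping is done, every other step is either a direct invocation of earlier results in the paper or the classical Fontaine computation $(\B_{\st})^{\G_K}=F$.
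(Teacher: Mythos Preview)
Your proposal is correct and follows essentially the same approach as the paper: both rely on the classical input $(\Btrigplus)^{\G_K}=(\B_{\st})^{\G_K}=F$ together with the factor-by-factor invariants result (Lemma \ref{inv and tensors}/Corollary \ref{cor invariant of frechet factor by factor} and its LF extension in Remark \ref{invariants LF spaces}). The paper states this in one sentence as ``immediate''; your version simply makes the sandwich $\Bt_{\rig,\Delta}^+\subset\BtlogplusDel\subset\B_{\st,\Delta}$ and the LF bookkeeping for $\B_{\st,\Delta}\cong\hat{\otimes}^{\alpha}\B_{\st}$ explicit, which is exactly the content the paper is suppressing.
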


\section{Multivariable Tate-Sen descent}

\subsection{The Tate-Sen formalism}
Let $\delta \geq 1$ be an integer, $\Delta = \{1,\ldots,\delta\}$ and for each $i \in \Delta$, $G_i$ is a profinite group, admitting a continuous character $\chi_i : G_i \ra \Z_p^\times$ with open image and kernel $H_i$. If $G_i'$ is an open subgroup of $G_i$, and if $H_i' = G_i' \cap H_i$, then we let $G_{H_i'}$ denote the normalizer of $H_i'$ in $G_i$. We also let $\tilde{\Gamma}_{H_i'} = G_{H_i'}/H_i'$, and we let $C_{H_i'}$ denote the center of $\tilde{\Gamma}_{H_i'}$. By \cite[Lemm. 3.1.1]{BC08} $C_{H_i'}$ is an open subgroup of $\tilde{\Gamma}_{H_i'}$. We let $n_1(H_i')$ denote the smallest integer $n \geq 1$ such that $\chi_i(C_{H_i'})$ contains $1+p^n\Zp$, which is therefore $< +\infty$. When $\Delta = \{1\}$, we omit the subscript in the notations. We let $H = H_1 \times \ldots \times H_\delta$, and to avoid additional notations, we also denote by $H_i$ the subgroup of $H$ generated by the elements $(\id_{H_1},\ldots,h,\ldots,\id_{H_\delta})$ where $h \in H_i$ and the other components are the identity of $H_j$, $i \neq j$.

Let $S$ be a Banach algebra over $\Qp$ and let $\tilde{\Lambda}$ be an $\O_S$-algebra, equipped with a map $\vall : \tilde{\Lambda} \to \R \cup \{+\infty\}$ satisfying the following conditions:
\begin{enumerate}
\item $\vall(x) = +\infty$ if and only if $x=0$;
\item $\vall(xy) \geq \vall(x)+\vall(y)$;
\item $\vall(x+y) \geq \inf(\vall(x),\vall(y))$;
\item $\vall(p) > 0$.
\end{enumerate}

\begin{rema}
    In \cite{BC08} the fourth item asks that $\vall(px) = \vall(p)+\vall(x)$ but this is actually never used.
\end{rema}

Following \cite[Def. 3.1.3]{BC08}, we recall the classical Colmez-Tate-Sen conditions in the case where $\Delta = \{1\}$:

(CTS1) There exists $c_1 > 0$ such that for any open subgroups $H' \subset H''$ of $H$, there exists $\alpha \in \tilde{\Lambda}^{H'}$ such that $\vall(\alpha) > -c_1$ and $\sum_{\tau \in H''/H'}\tau(\alpha)=1$.

(CTS2) There exist $c_2 > 0$ and for each open subgroup $H'$ of $H$ an integer $n(H') \in \N$, a nondecreasing sequence $(\Lambda_{H',n})$ of closed $\O_S$-subalgebras of $\Lambda^{H'}$, and for $n \geq n(H')$, an $\O_S$-linear application $R_{H,n}: \Lambda^{H',n} \to \Lambda_{H',n}$ satisfying:
\begin{enumerate}
\item if $H' \subset H''$ then $\Lambda_{H'',n} \subset \Lambda_{H',n}$, and the restriction of $R_{H',n}$ to $\tilde{\Lambda}^{H''}$ coincides with $R_{H'',n}$;
\item the maps $R_{H',n}$ are $\Lambda_{H',n}$-linear, and $R_{H',n}(x) = x$ if $x \in \Lambda_{H',n}$;
\item for all $g \in G$, we have $g(\Lambda_{H',n}) = \Lambda_{gH'g^{-1},n}$, and $g(R_{H',n}(x)) = R_{gH'g^{-1},n}(g(x))$;
\item if $n \geq n(H')$, and if $x \in \tilde{\Lambda}^{H'}$, then $\vall(R_{H',n}(x) \geq \vall(x)-c_2$;
\item if $x \in \tilde{\Lambda}^{H'}$, then $\lim\limits_{n \ra +\infty}R_{H',n}(x)=x$.
\end{enumerate}

(CTS3) There exist $c_3 > 0$ and for any open subgroup $G'$ of $G$, an integer $n(G) \geq n_1(H')$, where $H' = G' \cap H$, such that if $n \geq n(G')$, if $\gamma \in \tilde{\Gamma}_{H'}$ is such that $n(\gamma) \leq n$, then $\gamma-1$ is invertible on $X_{H',n} = (1-R_{H',n})(\tilde{\Lambda}^{H'})$ and we have $\vall((\gamma-1)^{-1}(x)) \geq \vall(x)-c_3$ for $x \in X_{H',n}$.

The following proposition, which is proposition 3.1.4 of \cite{BC08}, is straightforward:

\begin{prop}
\label{prop CST stable by tensor prod}
If $\tilde{\Lambda}$ is a $\Zp$-algebra satisfying the Colmez-Tate-Sen conditions, and if $S$ is a Banach algebra equipped with the trivial $G$-action, then $\O_S \hat{\otimes}_{\Zp}\tilde{\Lambda}$ satisfies the Colmez-Sen-Tate conditions with the same constants $c_1, c_2$ and $c_3$. 
\end{prop}

We also recall the following lemma:

\begin{lemm}
\label{lemma classical CST decompletion stable}
Let $\tilde{\Lambda}$ be a $\Zp$-algebra satisfying the Colmez-Tate-Sen conditions, with constants $c_1, c_2$ and $c_3$. Let $H'$ be an open subgroup of $H$, $n \geq n(H')$, $\gamma \in \tilde{\Gamma}_H$ such that $n(\gamma) \leq n$ and $B \in \GL_d(\tilde{\Lambda}^{H'})$. Assume that there exists $V_1, V_2 \in \GL_d(\Lambda_{H',n})$ with $\vall(V_1-1) > c_3$ and $\vall(V_2-1) > c_3$ such that $\gamma(B) = V_1BV_2$. Then $B \in \GL_d(\Lambda_{H',n})$. 
\end{lemm}
\begin{proof}
This is \cite[Lemm. 3.2.5]{BC08}. 
\end{proof}

\begin{theo}
\label{theo classical TS}
Let $\tilde{\Lambda}$ a $\Zp$-algebra satisfying the Colmez-Tate-Sen conditions, with constants $c_1, c_2$ and $c_3$. Let $\sigma \mapsto U_\sigma$ be a continuous cocycle from $G$ to $\GL_d(\tilde{\Lambda})$. If $G'$ is an open normal subgroup of $G$ such that $U_\sigma-1 \in p^k\M_d(\tilde{\Lambda})$, and $\vall(U_\sigma-1) > c_1+2c_2+2c_3$ for all $\sigma \in G'$, and if $H' = G' \cap H$, then there exists $M \in 1+p^k\M_d(\tilde{\Lambda})$ such that $\vall(M-1) > c_2+c_3$ and such that the cocycle $\sigma \mapsto V_\sigma = M^{-1}U_\sigma\sigma(M)$ is trivial on $H'$ with values in $\GL_d(\Lambda_{H',n(G')})$. 
\end{theo}
\begin{proof}
This is \cite[Prop. 3.2.6]{BC08}.
\end{proof}

Note that this is meant to be applied to $\tilde{\Lambda}$ representations: if $T$ is a free continuous $\tilde{\Lambda}$-representation of rank $d$ of $G$, and if $\beta$ is a basis of $T$ over $\tilde{\Lambda}$, then the map $g \in G \mapsto \mathrm{Mat}_\beta(g)$ defines a continuous cocycle from $G$ to $\GL_d(\tilde{\Lambda})$. Conversely, the data of such a cocycle endows $\tilde{\Lambda}$ with the structure of a free continuous $\tilde{\Lambda}$-representation of rank $d$, and cohomologous cocycles are exactly obtained by base change over $\tilde{\Lambda}$, which means that isomorphism classes of free $\tilde{\Lambda}$-representations of rank $d$ are in bijection with the continuous cohomology set $H^1(G,\GL_d(\tilde{\Lambda}))$.
\begin{rema}
    In \cite[Section 4]{porat2022overconvergence}, Porat develops a variant of the Tate-Sen formalism for Tate rings, in which the prime $p$ is replaced with a pseudouniformizer $f$, we will be interested in defining a multivariable version of this method.
\end{rema}
\subsection{Multivariable Tate-Sen descent for Tate rings}

Let $(\Lambda,\Lambda^+)$ be a Tate ring, with $\Lambda^+$ ring of definition and pseudouniformizer $f$. We assume that $\Lambda^+$ is $f$-adically complete. Recall that $f$ defines a valuation $\rm{val}_{\Lambda}$ that induces the $f$-adic topology on $\Lambda$.\\
We will use the notations of the previous subsection. Moreover if $(G'_{\alpha})_{\alpha\in\Delta}$ is a $\delta$-uple of subgroups of $G$, we will write $\underline{G'}_{\Delta}$ for the product $\times^{\alpha\in\Delta}G_{\alpha}$. Moreover if $\underline{H'}_{\Delta}=(H\cap G'_{\alpha})_{\alpha\in\Delta}$, we will write 
$\underline{\Gamma}_{\underline{H'},\Delta}$ for the quotient $\underline{G'}_{\Delta}/\underline{H'}_{\Delta}$. We let $\underline{C}_{\underline{H'},\Delta}$ be the product
$\times^{\alpha\in\Delta}C_{H'_{\alpha},\Delta}$, that is the center of $\underline{\Gamma}_{\underline{H'},\Delta}$, as group center commutes with direct products.\\
Using \cite[Lemme 3.1.1]{BC08} we obtain the following
\begin{lemm}
The group $\underline{C}_{\underline{H'},\Delta}$ is open in $\underline{\Gamma}_{\underline{H'},\Delta}$.
\end{lemm}
Suppose $\Lambda$ endowed with a continuous action of $G_{\Delta}$.
We have that the topology in $\Lambda$ is given by a valuation $\rm{val}_{\Lambda}$ satisfying analogous conditions to the ones of the previous subsection:
\begin{enumerate}
\item $\Lambda$ is separated with respect to $\rm{val}_{\Lambda}$.
\item $\rm{val}_{\Lambda}(xy)\ge \rm{val}_{\Lambda}(x)+\rm{val}_{\Lambda}(y)$, for any $x,y\in \Lambda$.
\item $\rm{val}_{\Lambda}(x+y)\ge \rm{inf}(\rm{val}_{\Lambda}(x),\rm{val}_{\Lambda}(y))$ for any $x,y\in \Lambda$.
\item $\rm{val}_{\Lambda}(f)>0$ and $\rm{val}_{\Lambda}(fx)=\rm{val}_{\Lambda}(f)+\rm{val}_{\Lambda}(x)$ if $x\in \Lambda$.
\end{enumerate}
We also assume that $G_{\Delta}$ acts isometrically on $\Lambda$.

Let $\Lambt_i$ be a Tate ring over $\Zp$, with ring of definition $\Lambt^{+}_i$, and pseudo-uniformizer $\beta_i$, such that $\beta_i$ divides $p$ in $\Lambda^{+}$, for $i=1,\dots,\delta$. Suppose that $\Lambt_i$ is endowed with a continous action of a profinite group $G$ admitting a continous character $\chi:G\to\Zp$. Let $S$ be an affinoid algebra. We set $\Lambt^{+}_{\Delta, S}$ to be the completed tensor product
$$\O_S\wotimes_{\Zp}(\wotimes_{\Zp}\Lambt^{+}).$$
We set 
$$\Lambt_{\Delta, S}=\Lambt^{+}_{\Delta, S}[\frac{1}{\beta_1\dots\beta_{\delta}}],$$
and we endow it with the $\beta_1\dots\beta_{\delta}$-adic topology. Note then that $\Lambt_{\Delta, S}$ is a Tate ring with ring of definition $\Lambt^{+}_{\Delta, S}$ and pseudo-uniformizer $\beta_1\dots\beta_{\delta}.$ In this subsection we will denote by $\rm{val}$ the valuation of $\Lambt_{\Delta, S}$.
We remark that $\Lambt_{\Delta, S}$ is endowed with an action of $G_{\Delta}$ induced by the action of $G_i$ on $\Lambt_i$ for $i=1,\dots,\delta$.
We have a continous map
$$\O_S\to\Lambt^{+}_{\Delta, S}.$$ 
From now on we assume that $\beta_d$ is invariant by $H$ and that $$(\Lambt^+_{\Delta})^{\underline{H}'_{\Delta}}=\Lambt_{\underline{H}_{\Delta}'},$$
where $\Lambt^+_{\underline{H}'_{\Delta}}=\Lambt^+_{H'_{\alpha_1}}\wotimes_{\Zp}\dots\wotimes_{\Zp}\Lambt^+_{H'_\delta}$.
We moreover assume that $\Lambt_i$ satisfies the Tate-Sen conditions of \cite[Subsection 4.1]{porat2022overconvergence}. In particular for each subgroup $H'_{\alpha}$ of $H_{\alpha}$ there exist maps
$$R^{\alpha}_{H'_{\alpha},n}:\Lambt^{H'_{\alpha}}\to \Lambda_{H'_{\alpha},n}$$ that are projections to some subalgebras $\Lambda_{H_{\alpha},n}$ for $\alpha\in\Delta$ and $n\ge n(G_{\alpha})$.
We let $$\Lambda^{+}_{\underline{H}'_{\Delta},n}=\Lambda^+_{H'_{\alpha_1},n}\wotimes_{\Zp}\dots\wotimes_{\Zp}\Lambda^+_{H'_{\alpha_{\delta}},n},$$
and $$\Lambda_{\underline{H}'_{\Delta},n}=\Lambda^+_{\underline{H}'_{\Delta},n}[\frac{1}{\beta_1\dots\beta_{\delta}}].$$
If we add the subscript $S$ to one of this rings that means that we tensor by $\O_{S}$ and complete for the $(\beta_1,\dots,\beta_{\delta})$-adic topology for the $+$-rings, for the non $+$ ones we do the same thing to the $+$ ring and we invert $p$.
\begin{theo}
\label{theo multivariable TS tate rings}
Let $c_1, c_2, c_3 > 0$. Suppose that for each $i \in \Delta$, $\tilde{\Lambda_i}$ satisfies the Colmez-Tate-Sen conditions for Tate rings (see \cite[Subsection 4.1]{porat2022overconvergence}) with constants $c_1, c_2$ and $c_3$ for the action of $G_{\Delta}$. Let $\sigma \mapsto U_\sigma$ be a continuous cocycle from $G_{\Delta}$ to $\GL_d(\Lambt^+_{\Delta, S})$. If $G'_{\alpha}$ is an open normal subgroup of $G_{\alpha}$ for $\alpha\in\Delta$ such that $U_\sigma-1 \in (\beta_1\dots\beta)^k\mathrm{M}_d(\Lambt^+_{\Delta, S})$ and $\val(U_\sigma-1) > \delta c_1+(\delta+1)(c_2+c_3)$, for all $\sigma \in \underline{G}'_{\Delta}$, and if $\underline{H}'_{\Delta} = \underline{G}'_{\Delta} \cap H_{\Delta}$, then there exists $M \in \rm{GL}_d(\Lambt_{\Delta, S})$ such that $\val(M-1) > c_2+c_3$ and such that the cocycle $\sigma \mapsto V_\sigma = M^{-1}U_\sigma\sigma(M)$ is trivial on $\underline{H}'_{\Delta}$ and with values in $\GL_d(\Lambda_{\underline{H}'_{\Delta},n(G'),S})$.
\end{theo}
\begin{proof}
 
    Since $\Lambt_i$ satisfies the Tate Sen conditions, for each open subgroup $H_{\alpha}''$ there exists an $\alpha_i$ in $\Lambt_i^+$ such that 
    $\vall(\alpha_i)>-c_1$ and $\sum_{\tau\in H_{\alpha}''/H_{\alpha}'}\tau(\alpha)=1$ for $i=1,\dots,\delta$. Thus the element $\alpha_{\Delta}=\alpha_1\otimes_{\Qp}\dots\otimes_{\Qp}\alpha_\delta$ 
    satisfies $\vall(\alpha_{\Delta})>-\delta c_1$
    and
    $$\sum_{\tau\in H''_{\Delta}/H'_{\Delta}}\tau(\alpha)=1.$$
    Thus in particular the axiom (TS1) of \cite[Subsection 4.1]{porat2022overconvergence} is satisfied. This implies that there exists $M'\in\rm{GL}_d(\Lambt_{\Delta, S})$, 
    such that the cocycle $\sigma \mapsto {M'}^{-1}U_\sigma\sigma(M')$ is trivial on $\underline{H}'_{\Delta}$, and $\vall(M'-1)> (\delta+1)(c_2+c_3)$.
    Note that $\Lambt_{\Delta, S}[1/\beta_d]$ is a complete Tate ring with pseudouniformizer $\beta_d$.
    Moreover we can extend the Tate traces 
    $$R_{H_{\alpha_\delta},n}:\Lambt^{H'_{\alpha}}\to \Lambda_{H'_{\alpha},n}$$
    to the completed tensor product $\Lambt_{\underline{H}_{\Delta}'}$, that is again a complete Tate ring with pseudouniformizer $\beta_d$.
    Then using \cite[Proposition 4.8]{porat2022overconvergence} we can find a matrix $M''\in\GL_{d}(\Lambt_{\Delta,S})$ such that the cocycle
    $\sigma \mapsto V_\sigma = {M''}^{-1}U_\sigma\sigma(M'')$ is trivial on $\underline{H}_{\Delta}$ and with values in $$\GL_d(\O_S\wotimes_{\Zp}\Lambt^+_{H'_{\alpha_1}}\wotimes_{\Zp}\dots\wotimes_{\Zp}\Lambda^+_{H'_{\alpha_{\delta},n(G'_{\alpha_{\delta}})}}).$$
    Note that $M''$ is constructed as an infinite product 
    $$\prod_{i=1}^{+\infty}M''_i,$$
    where the first factor is $M_1=(\gamma_{\delta}-1)^{-1}(1-R_{H'_{\delta},n(G'_{\alpha_{\delta}})})(U'_{\gamma_{\delta}})$, the second is $M_2=(\gamma_{\delta}-1)^{-1}(1-R_{H'_{\delta},,n(G'_{\alpha_{\delta}})})(M^{-1}_1U'_{\gamma_{\delta}}\gamma_{\delta}(M_1))$ and the others are constructed inductively like this. Thus since $(\gamma_{\delta}-1)^{-1}$ sends $\beta^{k}_{\delta}$ to $\beta_{\delta}^{k-c_3}$, and $\beta_i$ to $\beta_i$ for $i=1,\dots,\delta-1$, 
    and $R_{H'_{\delta},,n(G'_{\alpha_{\delta}})}$ sends $\beta^{-k}_{\delta}$ to $\beta_{\delta}^{k-c_2}$, and $\beta_i$ to $\beta_i$ for $i=1,\dots,\delta-1$, we get that $\vall(M''-1)>\delta(c_2+c_3)$. We can proceed inductively in this way for each variable to find $M$ as in the statement.  
\end{proof}

Given this result we can prove the following proposition.
\begin{prop}
\label{prop CST gives descended module}
Let $T$ be an $\O_S$-representation of dimension $d$ of $G$, let $V = S \otimes_{\O_S}T$ and let $k$ be an integer such that $\vall(p^k) > \delta c_1+(\delta+1)(c_2+c_3)$. Let $G'$ be an open normal subgroup of $G$ acting trivially on $T/p^kT$, let $H' = G' \cap H$ and let $\n \geq \n(G')$. Then $\tilde{\Lambda}_{\Delta, S}^+\otimes_{\O_S}T$ contains a unique sub-$\Lambda_{\underline{H}'_{\Delta},\n,S}^+$-module $\D_{H',\n}^+(T)$, free of rank $d$ satisfying:
\begin{enumerate}
\item $\D_{\underline{H}'_{\Delta},\n}^+(T)$ is fixed by $\underline{H}'$ and stable by $\underline{G}$;
\item the natural map $\tilde{\Lambda}^+_{\Delta,S} \otimes_{\Lambda_{\underline{H}'_{\Delta},\n}^+}\D_{\underline{H}',\n,S}^+(T) \to \tilde{\Lambda}^+_{\Delta,S} \otimes_{\O_S}T$ is an isomorphism;
\item $\D_{\underline{H}'_{\Delta},\n}^+(T)$ admits a basis over $\Lambda_{\underline{H}'_{\Delta},\n,S}^+$ which is $c_3$-fixed by $\underline{G}'/\underline{H}'$.
\end{enumerate}
\end{prop}
\begin{proof}
The existence of $\D_{\underline{H}'_{\Delta},\n}^+(T)$ follows from the fact that the $\Lambda_{\underline{H}',\n,S}^+$-module whose basis is the one provided by the base change given by the matrix $M$ of theorem \ref{theo multivariable TS tate rings} satisfies those conditions. The fact that there is only one such sub-module is a consequence of a repeated use of lemma \cite[Proposition 4.9]{porat2022overconvergence}. Indeed, assume that there are two such submodules, with respective bases $e_1,\ldots,e_d$ and $e_1',\ldots,e_d'$. Let $B \in \GL_d(\tilde{\Lambda}^+)$ the matrix of the $e_j'$ in the basis $e_1,\ldots,e_d$. Then $B$ is invariant under the action of $H'$, and if $W$, $W'$ denote respectively the matrices of a generator $\gamma$ of $H_{\delta}'$ in the bases $e_1,\ldots,e_d$ and $e_1',\ldots,e_d'$. We have $\val(W-1) > c_3$, $\val(W'-1) > c_3$ by construction, and moreover $W$ and $W'$ belong to $\GL_d(\Lambda_{\underline{H}',\n,S}^+)$. We have $W'=B^{-1}W\gamma(B)$ so that we can apply Lemma \ref{lemma classical CST decompletion stable}. Which means that $B$ has its coefficients in $\O_S\wotimes_{\Zp}\Lambda^+_1\wotimes_{\Zp}\dots\wotimes_{\Zp}\Lambda_{H'_{\delta},n_{\delta}}$. Applying the same reasoning with generators of $H_i$, for every $i \in \Delta$ shows that $B \in \GL_d(\Lambda_{\underline{H}',\n,S}^+)$, so that the $\Lambda_{\underline{H}',\n,S}^+$-modules generated respectively by $e_1,\ldots,e_d$ and $e_1',\ldots,e_d'$ are equal.
\end{proof}

\subsection{Multivariable Sen theory}
\label{subs multi Sen}
We now explain how to recover the results from \cite[\S 3]{BriChiaMaz21} as a consequence of the multivariable Colmez-Tate-Sen formalism. As stated before, this formalism was already underlined in \cite{BriChiaMaz21} and allows us to extend their results to families. In what follows, we let $\Kunder = (K_\alpha)_{\alpha \in \Delta}$ be $\delta$ finite extensions of $\Qp$, and we let $K_{\alpha,\infty}$ denote the cyclotomic extension of $K_\alpha$ for $\alpha \in \Delta$. We let $\underline{K}_{\infty} = (K_{\alpha,\infty})_{\alpha \in \Delta}$ and $K_{\Delta,\infty} = \otimes_{\Qp}^{\alpha \in \Delta}K_{\alpha,\infty}$. We also write $\hat{K_{\Delta,\infty}}=(\wotimes_{\Zp}^{\alpha \in \Delta}\O_{K_{\alpha,\infty}})[1/p]$, and we have $\hat{K_{\Delta,\infty}}= C_{\Delta}^{H_{\Kunder,\Delta}}$ by \cite[Theorem. 3.3]{BriChiaMaz21}.  

Recall the following proposition:

\begin{prop}
\label{prop Cp satisfies CTS}
The ring $\tilde{\Lambda}=\Cp=\O_{\Cp}[1/p]$ satisfies the conditions (CTS1), (CTS2) and (CTS3), with $\tilde{\Lambda}^{H_L} = \hat{L}_\infty = \O_{\hat{L_\infty}}[1/p]$, $\Lambda_{H_L,n} = L_n = \O_{L_n}[1/p]$, $R_{H_L,n} = R_{L,n}$ and $\vall = v_p$, for any $c_1 > 0, c_2 > 0$ and $c_3 > \frac{1}{p-1}$. 
\end{prop}
\begin{proof}
See \cite[Prop. 4.1.1]{BC08}.
\end{proof}

In what follows, we fix constants $c_1 > 0, c_2 > 0$ and $c_3 > \frac{1}{p-1}$ such that $\delta c_1+(\delta+1)(c_2+c_3) < v_p(12p^{\delta})$.

\begin{theo}
\label{theo Sen theory from CST}
Let $S$ be a $\Qp$-Banach algebra, let $T$ be an $\O_S$-representation of dimension $d$ of $\G_{\underline{K},\Delta}$, and let $V = S \otimes_{\O_S}T$. Let $\underline{L} = (L_1,\ldots,L_\delta)$ be such that for all $i \in \Delta$, $L_i/K_i$ is Galois and such that $\G_{\underline{L},\Delta}$ acts trivially on $T/12p^{\delta}$, and let $\n \geq \n(\underline{L})$. Then $(S\hat{\otimes}\C_{\Delta})\otimes_S V$ contains a unique sub-$(S \otimes L_{\n,\Delta})$-module $\D_\Sen^{L_{\Delta,\n}}(V)$, free of rank $d$, such that:
\begin{enumerate}
\item $\D_\Sen^{L_{\Delta,\n}}(V)$ is fixed by $H_{\underline{L},\Delta}$ and stable by $\G_{\Kunder,\Delta}$;
\item $\D_\Sen^{L_{\Delta,\n}}(V)$ contains a basis over $S \otimes L_{\n,\Delta}$ which is $c_3$-fixed by $\Gamma_{\underline{L},\Delta}$;
\item the natural map $(S\hat{\otimes}\C_{\Delta}) \otimes_{S \otimes L_{\n,\Delta}}\D_\Sen^{L_{\Delta,\n}}(V) \ra (S\hat{\otimes}\C_{\Delta})\otimes_S V$ is an isomorphism.
\end{enumerate} 
Moreover, we have $S/\mathfrak{m}_x\otimes_S \D_\Sen^{L_{\Delta,\n}}(V) \simeq \D_\Sen^{L_{\Delta,\n}}(V_x)$.
\end{theo}
\begin{proof}
This follows from Proposition \ref{prop Cp satisfies CTS} so that we can apply Proposition \ref{prop CST gives descended module}. The last point follows from the Proposition applied to $S/\mathfrak{m}_x$ and the fact that the image of $S/\mathfrak{m}_x\otimes_S \D_\Sen^{L_{\Delta,\n}}(V)$ in $(E_x \otimes \C_{\Delta})\otimes_{E_x}V_x$ satisfies the three conditions of the theorem, where $E_x = S/\mathfrak{m}_x$. 
\end{proof}

In particular, we recover \cite[Theorem 3.19, Corollary 3.20 and 3.21]{BriChiaMaz21} (after a Galois descent argument as in \cite[Theorem 3.19]{BriChiaMaz21}) and the Tate-Sen formalism allows us to extend those to families of representations. 

The multivariable Sen theory developed in \cite{BriChiaMaz21}, just as in the classical case (\cite{sen1980continuous}), makes use of ``$K_\Delta$-finite vectors'': if $W$ is a $\hat{K_{\Delta,\infty}}$-representation of $\Gamma_{\Kunder,\Delta}$, we say that an element of $W$ is $K_{\Delta}$-finite if $x$ lives in a $K_\Delta$-submodule of finite type of $W$ which is stable by $\Gamma_{\Kunder,\Delta}$, and we let $W^{\fin}$ denote the set of $K_\Delta$-finite vectors of $W$. Note that, by Proposition 3.22 of \cite{BriChiaMaz21}, we have $\hat{K_{\Delta,\infty}}=K_{\Delta,\infty}$ so that $W^{\fin}$ can naturally be viewed as a $K_{\Delta,\infty}$-submodule of $W$. 

If $V$ is a $p$-adic representation of $\G_{\Kunder,\Delta}$, we can define its Sen module by $\D_{\Sen,\Delta}(V) := ((C_\Delta \otimes_{\Qp}V)^{H_{\Kunder,\Delta}})^{\fin}$ as in \cite[\S 3]{BriChiaMaz21} by analogy with the classical case (\cite{sen1980continuous}). 

Note that the notation of $\D_{\Sen,\Delta}(V)$ is compatible with the definition of the module $\D_\Sen^{L_{\Delta,\n}}(V)$ in theorem \ref{theo Sen theory from CST}, since we have $\D_{\Sen,\Delta}(V) = (L_{\infty,\Delta}\otimes_{L_{\Delta,\n}}\D_\Sen^{L_{\Delta,\n}}(V))^{H_{\Kunder,\Delta}/H_{\Lunder,\Delta}}$ (the proof of \cite[Corollary 3.24]{BriChiaMaz21} shows that $L_{\infty,\Delta}\otimes_{L_{\Delta,\n}}\D_\Sen^{L_{\Delta,\n}}(V)$ is the set of $L_{\Delta}$-finite vectors of $(C_{\Delta}\otimes_{\Qp}V)^{H_{\Lunder,\Delta}}$ as $\Gamma_{\Lunder,\Delta}$-representations, and the result follows by taking the invariants by $H_{\Kunder,\Delta}/H_{\Lunder,\Delta}$ by étale descent using Proposition 3.25 and Corollary 3.27 of ibid.). 

We now explain how one can recover the Sen operators and Sen theory developed in \cite[\S 3]{BriChiaMaz21} in the spirit of \cite{Ber14SenLa}, using locally analytic vectors. 

\begin{lemm}
    We have $\hat{K_{\infty,\Delta}}^{\Gamma_{\Kunder,\Delta}-\la} = K_{\infty,\Delta}$
\end{lemm}
\begin{proof}
    If $G = \Gamma_{\Kunder,\Delta}$, then $\hat{K_{\infty,\Delta}}^{G-\la} = \cup_{n \geq 1}\hat{K_{\infty,\Delta}}^{G_n-\la}$, and Corollary \ref{coro computes locanamulti} implies directly that $\hat{K_{\infty,\Delta}}^{G_n-\la} = K_{\Delta,\n}$. 
\end{proof}

\begin{prop}
\label{prop la recovers fin Sen CDelta}
    Let $W$ be a free $\hat{K_{\infty,\Delta}}$-semilinear representation of $\Gamma_{\Kunder,\Delta}$ of finite type. Then $W^{\la} \simeq W^{\fin}$.
\end{prop}
\begin{proof}
    This is the same proof as the one of \cite[Thm. 3.2]{Ber14SenLa}. If $x \in W^{\fin}$ then it lies in some finitely generated $K_\Delta$-module stable by $\Gamma_{\Kunder,\Delta}$ and therefore inside some finite dimensional $\Qp$-vector space endowed with a $\Gamma_{\Kunder,\Delta}$-action. By Cartan's theorem \cite[Prop. 3.6.10]{Emer}, this implies that $x \in W^{\la}$. 

    For the converse, by theorem 3.28 of \cite{BriChiaMaz21}, there exists a basis $e_1,\cdots,e_d$ of $W^{\fin}$ over $K_{\Delta,\infty}$ which is also a basis of $W$ over $\hat{K_{\Delta,\infty}}$. Then by Proposition \ref{lainla and painpa}, $W^{\la} = \oplus_{i=1}^d(\hat{K_{\Delta,\infty}})^{\la}e_i = \oplus_{i=1}^dK_{\Delta,\infty}\cdot e_i = W^{\fin}$. 
\end{proof}

If $W$ is a locally-analytic representation of $\Gamma_{\Kunder,\Delta}$, the Lie algebra of $\Gamma_{\Kunder,\Delta}$ naturally acts on $W$, so that we get operators $(\nabla_\alpha)_{\alpha \in \Delta}$ defined by:

$$\nabla_\alpha = \lim\limits_{\gamma \in \Gamma_{K_\alpha}, \gamma \ra 1}\frac{\gamma-1}{\log(\chi_{\cycl}(\gamma))}.$$

The operator $\nabla_\alpha$ is also equal to $\log \gamma$ for $\gamma \in \Gamma_{K_\alpha}$ close enough to $1$ (in the cyclotomic case these are the \emph{generalized Sen operators of \cite[Subsection 3.30]{BriChiaMaz21}}). 

We define the multivariable HT (Hodge-Tate) weights of $V$ as the elements $(\lambda_\alpha)_{\alpha \in \Delta}$ such that $\cap_{\alpha \in \Delta}\ker(\nabla_\alpha-\lambda_\alpha)$ is a nontrivial sub-$K_{\infty,\Delta}$-module of $\D_{\Sen,\Delta}(V)$.

\subsection{Overconvergent families of multivariable $(\phi,\Gamma)$-modules}
We now explain how to attach to a family of representations a family of $(\phi_{\Delta},\Gamma_{\Kunder,\Delta})$-modules using these multivariable Colmez-Sen-Tate conditions. When working over a base which is a finite extension of $\Qp$, these constructions coincide with the ones of \cite{KedCarZab} and of \cite{PalZab21}. 

\begin{prop}
\label{prop At satisfies CTS}
The rings $\tilde{\Lambda}=\At^{\dagger,r}[1/\varpi]$ satisfy the conditions (CTS1), (CTS2) and (CTS3) for any $r \geq 1$, with $\tilde{\Lambda}^{H_L} = \At_L^{\dagger,r}[1/\varpi]$, $\Lambda_{H_L,n} = \phi^{-n}(\A^{\dagger,p^nr}[1/\varpi])$, $R_{H_L,n} = R_{L,n}$ and $\vall = v_r$, for any $c_1 > 0, c_2 > 0$ and $c_3 > \frac{1}{p-1}$. 
\end{prop}
\begin{proof}
See \cite[Prop. 4.2.1]{BC08}.
\end{proof}

If $V$ is a (family of) representation(s) of $\G_{\Kunder,\Delta}$, admitting a Galois-stable integral lattice $T$ such that $V=S \otimes_{\O_S}T$, and if $(L_{\alpha})_{\alpha \in \Delta}$ is such that for all $i \in \Delta$, $L_i/K_i$ is Galois and such that $\G_{\underline{L},\Delta}$ acts trivially on $T/12p^{\delta}$, we let $\s(V) = (\max(r_{n(L_\alpha)},s(L_\alpha/K_\alpha)))_{\alpha \in \Delta}$, and up to increasing the $s(V)_\alpha$, we can assume that there exists $\n(V) = (n(V)_\alpha)_{\alpha \in \Delta}$ such that $\s(V) = (r_{n(V)_\alpha})_{\alpha \in \Delta}$. If $\s \geq \s(V)$, we let $\D_{\Lunder,\Delta}^{\dagger,\s}(T)= ((\O_S\hat{\otimes}_{\Zp}\A_{\Delta}^{\dagger,\s})\otimes_{\Zp}T)^{H_{\Lunder,\Delta}}$ which is by lemma \ref{lemma1 invar Atdagdelta} an $\O_S\hat{\otimes}_{\Zp}\A_{\Lunder,\Delta}^{\dagger,\s}$-module, which is endowed with an action of $\Gamma_{\Lunder,\Delta}$. For $\n \geq 0$, we let $\D_{\Lunder,\Delta,\n}^{\dagger,\s}(T)=\phi^{-\n}(\D_{\Lunder,\Delta}^{\dagger,p^{\n}\s}(T))$ which is an $\O_S\hat{\otimes}_{\Zp}\A_{\Lunder,\Delta,\n}^{\dagger,\s}= \phi^{-\n}(\O_S\hat{\otimes}_{\Zp}\A_{\Lunder,\Delta}^{\dagger,p^{\n}\s})$-module. 

For $\s \geq \s(V)$, we let $\D_{\Kunder,\Delta}^{\dagger,\s}(T)=(\D_{\Lunder,\Delta}^{\dagger,\s}(T))^{\G_{\Kunder,\Delta}/\G_{\Lunder,\Delta}}$ and $\D_{\Kunder,\Delta}^{\dagger,\s}(V) = \D_{\Kunder,\Delta}^{\dagger,\s}(T)[1/p]$. 

\begin{prop}
\label{prop overconv in fam}
    Let $S$ be a $\Qp$-Banach algebra, let $T$ be an $\O_S$-representation of dimension $d$ of $\G_{\Kunder,\Delta}$ and let $V=S \otimes_{\O_S}T$. Let $\underline{L} = (L_1,\ldots,L_\delta)$ be such that for all $i \in \Delta$, $L_i/K_i$ is Galois and such that $\G_{\underline{L},\Delta}$ acts trivially on $T/12p^{\delta}$, and let $\n \geq \n(\underline{L})$. Then $(\O_S \hat{\otimes}\At_{\Delta}^{\dagger,\underline{r_0}})\otimes_{\O_S} T$ admits a unique sub-$\O_S \hat{\otimes}\A_{\Lunder,\Delta,\n}^{\dagger,\underline{r_0}}$-module $\D_{\Lunder,\Delta,\n}^{\dagger,\underline{r_0}}(T)$ free of rank $d$, fixed by $H_{\Lunder,\Delta}$, stable by $\G_{\Kunder,\Delta}$, having a basis $c_3$-fixed by $\Gamma_{\Lunder,\Delta}$ and such that:
    $$(\O_S \hat{\otimes}\At_{\Delta}^{\dagger,\underline{r_0}})\otimes_{\O_S \hat{\otimes}\A_{\Lunder,\Delta,\n}^{\dagger,\underline{r_0}}}\D_{\Lunder,\Delta,\n}^{\dagger,\underline{r_0}}(T) \simeq (\O_S \hat{\otimes}\At_{\Delta}^{\dagger,\underline{r_0}})\otimes_{\O_S}T.$$
\end{prop}
\begin{proof}
    This is Proposition \ref{prop CST gives descended module} which we can use thanks to Theorem \ref{theo multivariable TS tate rings} and Proposition \ref{prop At satisfies CTS}.
\end{proof}

If $V$ is an $S$-family of representations of $\G_{\Kunder,\Delta}$ admitting a Galois-stable integral lattice $T$, we let 
$$\D_{\Kunder,\Delta}^{\dagger,\s}(V):=((S\hat{\otimes}_{\Qp}\B_{\Lunder,\Delta}^{\dagger,\s})\otimes_{S\hat{\otimes}_{\Qp}\B_{\Lunder,\Delta}^{\dagger,\s(V)}}\phi^{\n(V)}(\D_{\Lunder,\Delta,\n(V)}^{\dagger,\underline{r_0}}(V)))^{H_{\Kunder,\Delta}} ,$$
where $\D_{\Lunder,\Delta,\n(V)}^{\dagger,\underline{r_0}}(V) = \D_{\Lunder,\Delta,\n(V)}^{\dagger,\underline{r_0}}(T)[1/p]$.

\begin{theo}
\label{theo overconv in fam}
    If $V$ is an $S$-family of representations of $\G_{\Kunder,\Delta}$, free of dimension $d$, admitting a Galois-stable integral lattice, and if $\s \geq \s(V)$, then:
    \begin{enumerate}
        \item $\D_{\Kunder,\Delta}^{\dagger,\s}(V)$ is a projective $S \hat{\otimes}_{\Qp}\B_{\Kunder,\Delta}^{\dagger,\s}$-module of rank $d$;
        \item the map $(S \hat{\otimes}_{\Qp}\Bt_{\Kunder,\Delta}^{\dagger,\s})\otimes_{S \hat{\otimes}_{\Qp}\B_{\Kunder,\Delta}^{\dagger,\s}}\D_{\Kunder,\Delta}^{\dagger,\s}(V) \ra (S \hat{\otimes}_{\Qp}\Bt_{\Kunder,\Delta}^{\dagger,\s})\otimes_SV$ is an isomorphism;
        \item if $x \in \cal{X}$, the map $S/\mathfrak{m}_x \otimes_S\D_{\Kunder,\Delta}^{\dagger,\s}(V) \ra \D_{\Kunder,\Delta}^{\dagger,\s}(V_x)$ is an isomorphism.
    \end{enumerate}
\end{theo}
\begin{proof}
    The proof is the same as the one of \cite[Théorème 4.2.9]{BC08}: Proposition \ref{prop overconv in fam} implies that $\D_{\Lunder,\Delta}^{\dagger,\s}(V)$ is free of rank $d$ over $S\hat{\otimes}_{\Qp}\B_{\Lunder,\Delta}^{\dagger,\s}$ and that the map $(S \hat{\otimes}_{\Qp}\Bt_{\Delta}^{\dagger,\s})\otimes_{S \hat{\otimes}_{\Qp}\B_{\Lunder,\Delta}^{\dagger,\s}}\D_{\Lunder,\Delta}^{\dagger,\s}(V) \ra (S \hat{\otimes}_{\Qp}\Bt_{\Delta}^{\dagger,\s})\otimes_SV$ is an isomorphism. Proposition \ref{prop taking invariants gives proj and iso} then implies the first two points of the Theorem. For the last point, the same argument as in \cite[Théorème 4.2.9]{BC08} can be followed verbatim.
\end{proof}

It is clear by construction that after tensoring the modules over either $\O_S \wotimes_{\Zp}\A_{\Lunder,\Delta}^{\dagger}$ (before taking the invariants by $H_{\Kunder,\Delta}$) or $\O_S \wotimes_{\Zp}\A_{\Kunder,\Delta}^{\dagger}$ (after having taken the invariants by $H_{\Kunder}$) what we obtain are étale $(\phi_\Delta,\Gamma_{\Lunder,\Delta})$- or $(\phi_\Delta,\Gamma_{\Kunder,\Delta})$-modules, as the $\phi_\alpha$ act trivially on $T$. 

We now specialize our constructions to the case where $S$ is a finite extension of $\Qp$. Using Lemma \ref{lemma stable lattice field}, any $S$-representation $V$ in this case can be written as $V=S \otimes_{\O_S}T$ for some Galois-stable lattice $T$ of $V$.

\begin{prop}
\label{prop overconv over Qp}
    Let $T$ be a free $\Zp$-representation of $\G_{\Kunder,\Delta}$. Let $\underline{L} = (L_1,\ldots,L_\delta)$ be such that for all $i \in \Delta$, $L_i/K_i$ is Galois and such that $\G_{\underline{L},\Delta}$ acts trivially on $T/12p^{\delta}T$, and let $\n \geq \n(\underline{L})$. Then $\D_{\Lunder,\Delta,\n}^{\dagger,\underline{r_0}}(T)=\phi^{-\n}((\A_{\Delta}^{\dagger,\r_{\n}} \otimes_{\Zp}T)^{H_{\Lunder,\Delta}})$ is the unique sub-$\A_{\Lunder,\Delta,\n}^{\dagger,\underline{r_0}}$-module, free of rank $d$ of $\At_{\Delta}^{\dagger,\underline{r_0}}\otimes_{\Zp}T$ satisfying the following:
    \begin{enumerate}
        \item $\D_{\Lunder,\Delta,\n}^{\dagger,\underline{r_0}}(T)$ is fixed by $H_{\Lunder,\Delta}$ and stable by $\G_{\Kunder,\Delta}$;
        \item the natural map $\At_{\Delta}^{\dagger,\underline{r_0}}\otimes_{\A_{\Lunder,\Delta,\n}^{\dagger,\underline{r_0}}}\D_{\Lunder,\Delta,\n}^{\dagger,\underline{r_0}}(T) \ra \At_{\Delta}^{\dagger,\underline{r_0}}\otimes_{\Zp}T$ is an isomorphism;
        \item the $\A_{\Lunder,\Delta,\n}^{\dagger,\underline{r_0}}$-module $\D_{\Lunder,\Delta,\n}^{\dagger,\underline{r_0}}(T)$ admits a basis in which if $\gamma \in \Gamma_{\Lunder,\Delta}$ then the matrix $W_\gamma$ of $\gamma$ in this basis satisfies $v_{\underline{r_0}}(W_\gamma-1) > c_3$.
    \end{enumerate}
\end{prop}
\begin{proof}
This is similar to \cite[Proposition 4.2.3]{BC08}: the existence of a unique such sub-$\A_{\Lunder,\Delta,\n}^{\dagger,\underline{r_0}}$-module follows directly from the application of our Colmez-Sen-Tate method, using Proposition \ref{prop overconv in fam}. It remains to see that this module is indeed $\D_{\Lunder,\Delta,\n}^{\dagger,\underline{r_0}}(T)=\phi^{-\n}((\A_{\Delta}^{\dagger,\r_{\n}} \otimes_{\Zp}T)^{H_{\Lunder,\Delta}})$. 

As in the proof of \cite[Proposition 4.2.3]{BC08}, we are left to compare the module obtained by our method with the one coming from Theorem \ref{theo overconvergence PalZab and KedCarZab} (which is one of the main theorem of \cite{KedCarZab}). This is done in \cite{BC08} using Lemma 4.2.2 of ibid. and here we just need the analogue for our multivariable setting, which is given by \cite[Lemma 6.13]{KedCarZab}. 
\end{proof}

In particular this implies that if $\s \geq r_{\n(\Lunder)}$ then $\D_{\Lunder,\Delta}^{\dagger,\s}(T)$ is free of rank $d$ and the natural map $\A_{\Delta}^{\dagger,\s} \otimes_{\A_{\Lunder,\Delta}^{\dagger,\s}}\D_{\Lunder,\Delta}^{\dagger,\s}(T) \ra \A_{\Delta}^{\dagger,\s} \otimes_{\Zp}T$ is an isomorphism. 

By the unicity property in proposition \ref{prop overconv over Qp}, it follows that $\D_{\Kunder,\Delta}^{\dagger}(T):=\cup_{\s \geq \s(V)}\D_{\Kunder,\Delta}^{\dagger,\s}(T)$, where $\D_{\Kunder,\Delta}^{\dagger,\s}(T)$ is defined by

$$\D_{\Kunder,\Delta}^{\dagger,\s}(T):=((\O_S\hat{\otimes}_{\Zp}\A_{\Lunder,\Delta}^{\dagger,\s})\otimes_{\O_S\hat{\otimes}_{\Qp}\A_{\Lunder,\Delta}^{\dagger,\s(V)}}\phi^{\n(V)}(\D_{\Lunder,\Delta,\n(T)}^{\dagger,\underline{r_0}}(T)))^{H_{\Kunder,\Delta}} ,$$

is equal to the étale $(\phi_\Delta,\Gamma_{\Kunder,\Delta})$-module over $\A_{\Kunder,\Delta}^{\dagger}$ attached to $T$ by Theorem \ref{theo overconvergence PalZab and KedCarZab}. In particular, this allows us to recover the constructions of \cite{PalZab21} and \cite{KedCarZab} of overconvergent $(\phi_\Delta,\Gamma_{\Kunder,\Delta})$-modules attached to $p$-adic representations of $\G_{\Kunder,\Delta}$. Moreover, these constructions extend to families of representations. As in the classical case, the functor $V \mapsto \D^{\dagger}(V)$ is no longer an equivalence of categories between $S$-representations of $\G_{\Kunder,\Delta}$ and étale $(\phi_\Delta,\Gamma_{\Kunder,\Delta})$-modules over $S \wotimes_{\Qp}\B_{\Kunder,\Delta}^\dagger$ if $S$ is no longer a finite extension of $\Qp$ (cf \cite[Remarque 4.2.10]{BC08}).

Finally, we are able to apply our constructions to generalize \cite[Theorem 6.19]{KedCarZab} (which does not seem to be easy using the methods developed in ibid as stated in their Remark 6.20). Before doing so, we need several intermediate results, basically following \cite[\S 3]{porat2022overconvergence} :

\begin{lemm}
\label{Lemm approx for overconv Porat}
    Let $R$ be a commutative ring, endowed with a nonarchimedean valuation $v_R$, and an invertible morphism $\phi:R \ra R$ such that $v_R(\phi(x)) = pv_R(x)$. Let $X \in \GL_d(R)$. Then for any $c < \frac{p}{p-1}(v_R(X)+v_R(X^{-1}))$, and for any $Y \in \mathrm{M}_d(R)$, there exist $U,V \in \mathrm{M}_d(R)$ such that $v_R(V) \geq c$ and 
    \[X^{-1}\phi(U)X-U=Y-V \]
\end{lemm}
\begin{proof}
    This is Lemma 3.3 of \cite{porat2022overconvergence}. 
\end{proof}

\begin{lemm}
\label{lemm phi mod descends to overconv with perfect coeffs}
    Let $M$ be a free étale $\phi_{\Delta}$-module over $\At_{\Kunder,\Delta}$. Then there exists a unique free étale $\phi_{\Delta}$-module $M^\dagger$ over $\At_{\Kunder,\Delta}^\dagger$, contained in $M$ such that the natural map
    \[ M^\dagger \otimes_{\At_{\Kunder,\Delta}^\dagger}\At_{\Kunder,\Delta} \ra M \]
    is an isomorphim.
\end{lemm}
\begin{proof}
    The idea is the same as for Proposition 3.4 of \cite{porat2022overconvergence}: start with a basis of $M$, and let $X$ denote the matrix of the Frobenius $\phi_{\Delta} = \phi_{\alpha_1}\circ \cdots \circ \phi_{\alpha_\delta}$. Write $X = \sum_{n \geq 0}p^n[X_n]$ (which is possible because $\At_{\Kunder,\Delta} = W(C_\Delta^\flat)$ by Remark \ref{rmk AtDelta as Witt vectors}). We want to construct a matrix $U = \sum_{n \geq 0}p^n[U_n] \in \GL_d(\At_{\Kunder,\Delta})$ such that $C = U^{-1}X\phi_{\Delta}(U) \in \GL_d(\At_{\Kunder,\Delta}^\dagger)$. Let us take $U_0 = \Id$. Suppose now that $U_0,\cdots,U_{n-1}$ have been defined, and let us write $U' = \sum_{i=0}^{n-1}p^i[U_i]$. We can write $(U')^{-1}X\phi_{\Delta}(U') = \sum_{i=0}^{n-1}p^i[C_i] \mod p^n\mathrm{M}_d(\At_{\Kunder,\Delta})$, where $C_i$ depends only on $U_0,\cdots,U_i$, so that there exists $Y \in \mathrm{M}_d(\At_{\Kunder,\Delta})$ such that
    \[(U')^{-1}X\phi_{\Delta}(U') - \sum_{i=0}^{n-1}p^i[C_i]=p^nY.\]

    We now want to find $U_n \in \mathrm{M}_d(C_\Delta^\flat)$ such that 
    \[(U'+p^n[U_n])^{-1}X\phi_{\Delta}(U'+p^n[U_n]) = \sum_{i=0}^{n}p^i[C_i] \mod p^{n+1}\mathrm{M}_d(\At_{\Kunder,\Delta})\]
    satisfies that $v_\alpha(C_n)$ is bounded below for all $\alpha \in \Delta$.

    It suffices to prove that the mod $p$ equation
    \[U_n - X_0\phi_{\Delta}(U_n)X_0^{-1} = \overline{Y}X_0^{-1}-C_nX_0^{-1}\]
    can be solved in $C_\Delta^\flat$ in both $U_n$ and $C_n$, with each $v_\alpha(p^n[C_n])$ bounded independantly of $n$. But this is possible by Lemma \ref{Lemm approx for overconv Porat}, using the $(\varpi_{\alpha_1}\cdots \varpi_{\alpha_\delta})$-adic valuation on $C_\Delta^\flat$. 

    By induction, we obtain a matrix $U = \sum_{n \geq 0}p^n[U_n] \in \GL_d(\At_{\Kunder,\Delta})$ such that $C = U^{-1}X\phi_{\Delta}(U) \in \GL_d(\At_{\Kunder,\Delta}^\dagger)$, which is what we wanted.

    To finish the proof, it remains to prove that the overconvergent module $M^\dagger$ obtained is unique. This follows from the fact that if two such modules existed, then the base change matrix $V \in \GL_d(\At_{\Kunder,\Delta})$ between the two, would satisfy a relation of the form $\phi_\alpha(V)=A_\alpha VB_\alpha$, with $A_\alpha,B_\alpha \in \GL_d(\At_{\Kunder,\Delta}^\dagger)$ for each $\alpha \in \Delta$. The same proof as in \cite[Prop. 3.5]{porat2022overconvergence} shows that the coefficients of $V$ belong to $\tilde{\O}_{\mathcal{E}_\Delta}^{\dagger,j_\alpha}$ for each $\alpha \in \Delta$ (following Notation 2.10 of \cite{KedCarZab}), and thus $V \in \GL_d(\At_{\Kunder,\Delta}^\dagger)$.
\end{proof}

\begin{prop}
    Let $\D$ be a $(\phi_\Delta,\Gamma_{\Kunder,\Delta})$-module over $B=A[1/p]$, where $A$ is either $\At_{\Kunder,\Delta}^{\dagger}$, $\A_{\Kunder,\Delta}^{\dagger}$, $\At_{\Kunder,\Delta}$ or $\A_{\Kunder,\Delta}$. Assume that $\D$ satisfy the following:
    \begin{enumerate}
        \item The underlying $\phi_\Delta$-module of $\D$ is the base extension of an étale $\phi_\Delta$-module over $A$.
        \item The action of $\Gamma_{\Kunder,\Delta}$ is bounded: for some finitely generated $A$-module $\D_0$ generating $\D$ over $B$, the action of $\Gamma_{\Kunder,\Delta}$ carries $\D_0$ into $p^{-m}\D_0$ for some $m \geq 0$.
    \end{enumerate}
    Then $\D$ is an étale $(\phi_\Delta,\Gamma_{\Kunder,\Delta})$ over $B$.
\end{prop}
\begin{proof}
    The case where $A= \A_{\Kunder,\Delta}^{\dagger}$ or $A=\A_{\Kunder,\Delta}$ were treated by \cite[Theorem 6.19]{KedCarZab}, so it remains to prove the last two cases. We let $M_0 = \At_{\Delta} \otimes_A \D_0$ and $M = \Bt_{\Delta} \otimes_B \D = M_0[1/p]$ (note that if $A = \At_{\Kunder,\Delta}^{\dagger}$, then we could just tensor by $\At_{\Delta}^\dagger$ over $A$ directly and skip the next step). By assumption, the action of $\Gamma_{\Kunder,\Delta}$ carries $\D_0$ into $p^{-m}\D_0$ for some $m \geq 0$, so that it carries $M_0$ into $p^{-m}M_0$, and $M_0$ is an étale $\At_\Delta$-module. 

    By lemma \ref{lemm phi mod descends to overconv with perfect coeffs}, there exists $M_0^\dagger \subset M_0$ a sub-$\phi_\Delta$-module over $\At_\Delta^\dagger$ of $M_0$ which is étale, and such that the action of $\Gamma_{\Kunder,\Delta}$ carries $M_0^\dagger$ into $p^{-m}M_0^\dagger$ (since it is given by a base change matrix with coefficients in $\GL_d(\At_{\Kunder,\Delta})$). Now theorem \ref{theo multivariable TS tate rings} can be applied (because of theorem \ref{theo multivariable TS tate rings} and proposition \ref{prop At satisfies CTS}) and gives us an étale $(\phi_\Delta,\Gamma_{\Lunder,\Delta})$-module over $\A_{\Lunder,\Delta}^{\dagger}$ for some $\Lunder = (L_\alpha)_{\alpha \in \Delta}$ such that the $L_\alpha/K_\alpha$ are finite extensions, and which satisfy the same boundness condition for the $\Gamma_{\Lunder,\Delta}$. Inverting $p$ and taking the invariants by $H_{\Kunder}$, we obtain a $(\phi_\Delta,\Gamma_{\Kunder})$-module over $\B_{\Kunder,\Delta}^{\dagger}$ satisfying the conditions of the Proposition, and we can now apply \cite[Theorem 6.19]{KedCarZab} to conclude.
\end{proof}
\section{Multivariable de Rham representations}
Recall that in \cite[\S 4]{BriChiaMaz21}, a ring $\A_{\inf,\Delta}$ is defined, and that by Proposition \ref{Ainfdel = padiccomp of xiadic}, this ring is isomorphic to the $p$-adic completion of the completion of $\At^+ \otimes_{\Zp}\cdots\otimes_{\Zp} \At^+$ for the $(\xi_{\alpha_1},\ldots,\xi_{\alpha_\delta})$-topology. Also recall that there is a map $\theta_\Delta : \A_{\mathrm{inf},\Delta} \ra \O_{C_\Delta}$ which is a surjective $\G_{\Kunder,\Delta}$-equivariant morphism of $\Qp$-algebras $\theta_\Delta : \A_{\mathrm{inf},\Delta}\ra \O_{C_\Delta}$.

\begin{lemm}
\label{lemm Afindel = pkertheta adic completion}
    The ring $\A_{\inf,\Delta}$ is isomorphic to the $(p,\ker(\theta_\Delta))$-adic completion of $\At^+ \otimes_{\Zp}\cdots\otimes_{\Zp} \At^+$.
\end{lemm}
\begin{proof}
    Let $\Atplus_{\Delta}$ denote the $(p,\ker(\theta_\Delta))$-adic completion of $\At^+ \otimes_{\Zp}\cdots\otimes_{\Zp} \At^+$. Since $\At^+ \otimes_{\Zp}\cdots\otimes_{\Zp} \At^+$ is a subring of $\A_{\inf,\Delta}$, and since the latter is $(p,\ker(\theta_\Delta))$-adically complete, we deduce that we have an injection $\Atplus_{\Delta} \subset \A_{\inf,\Delta}$. The fact that $\Atplus_{\Delta}$ is $(p,\ker(\theta_\Delta))$-adically complete means that it is $p$-adically complete because $(p) \subset (p,\ker(\theta_\Delta))$ is finitely generated (see for example \cite[\href{https://stacks.math.columbia.edu/tag/090T}{Tag 090T}]{stacks-project}.
    Therefore we have $\At_{\Delta}^+ = \varprojlim_n(\Atplus_\Delta)/p^n$, and by construction the topology induced on $(\Atplus_\Delta)/p^n$ is the $(p,\ker(\theta_\Delta)) = (p,[\varpi_\alpha])_{\alpha \in \Delta}$-adic topology, and thus the $([\varpi_\alpha])_{\alpha \in \Delta}$-adic topology. This means that $\At_{\Delta}^+$ is the $p$-adic completion of the $([\varpi_\alpha])_{\alpha \in \Delta}$-completion of $\At^+ \otimes_{\Zp}\cdots\otimes_{\Zp} \At^+$, and we conclude by Proposition \ref{Ainfdel = padiccomp of xiadic}.
\end{proof}

The ideal $\ker(\theta_\Delta)$ of $\A_{\mathrm{inf},\Delta}$ is generated by $\{\xi_\alpha\}_{\alpha \in \Delta}$ (this is \cite[Corollary 4.4]{BriChiaMaz21}) and also by $\{\omega_\alpha\}_{\alpha \in \Delta}$ (one checks that $\xi_\alpha/\omega_\alpha$ is invertible in $\A_{\mathrm{inf},\Delta}$).

As in \cite[\S 4]{BriChiaMaz21}, we define $\BdrplusDel$ as the $\ker(\theta_\Delta)$-adic completion of $\A_{\inf,\Delta}[1/p]$, and we endow it with the so called \textit{canonical topology}, which means that each quotient $\BdrplusDel/\ker(\theta_\Delta)^m \simeq \A_{\inf,\Delta}[1/p]/\ker(\theta_\Delta)^m$ is endowed with the $p$-adic topology coming from the one on $ \A_{\inf,\Delta}$, giving the quotient a $p$-adic Banach space structure. This means that $\BdrplusDel$ equipped with the canonical topology is a Fréchet space. The following proposition shows that we can replace the definition of $\BdrplusDel$ as in \cite{BriChiaMaz21} by one which is more compatible with all our constructions so far:
\begin{prop}
\label{prop Bdrdel=prod of Bdr}
    We have an isomorphism of topological rings $\BdrplusDel \simeq \wotimes_{\Qp}^{\alpha \in \Delta}\Bdrplus$, where each $\Bdrplus$ is a copy of the classical ring of periods of Fontaine endowed with its canonical topology which makes it a Fréchet space.
\end{prop}
\begin{proof}
    By definition, we have $\BdrplusDel = \varprojlim_n\A_{\inf,\Delta}[1/p]/\ker(\theta_\Delta)^n$, and thus
    $$\BdrplusDel = \varprojlim_n((\A_{\inf,\Delta}/\ker(\theta_\Delta)^n)[1/p]),$$
    using the fact that localization and quotients commute. 
    Since $\ker(\theta_\Delta) = (\xi_\alpha)_{\alpha \in \Delta}$, we have that for every $n \geq 1$, $\ker(\theta_\Delta)^{n\delta} \subset (\xi_\alpha^n)_{\alpha \in \Delta} \subset \ker(\theta_\Delta)^n$, so that 
    $$\BdrplusDel = \varprojlim_n((\A_{\inf,\Delta}/(\xi_\alpha^n)_{\alpha \in \Delta})[1/p]).$$
    Moreover, the inclusions $\ker(\theta_\Delta)^{n\delta} \subset (\xi_\alpha^n)_{\alpha \in \Delta} \subset \ker(\theta_\Delta)^n$ imply that $\A_{\inf,\Delta}/(\xi_\alpha^n)_{\alpha \in \Delta}$ is the $p$-adic completion of $(\Atplus \otimes_{\Zp} \cdots \otimes_{\Zp}\Atplus)/(\xi_\alpha^n)_{\alpha \in \Delta}$ by lemma \ref{lemm Afindel = pkertheta adic completion}.
    By \cite[Exercise 1.3, Chapter 1]{qing2006algebraic}, we have an isomorphism
    $$(\Atplus \otimes_{\Zp} \cdots \otimes_{\Zp}\Atplus)/(\xi_\alpha^n)_{\alpha \in \Delta} \simeq \Atplus/\xi_{\alpha_1}^n \otimes_{\Zp} \cdots \otimes_{\Zp}\Atplus/\xi_{\alpha_\delta}^n,$$
    so that 
    $$\BdrplusDel = \varprojlim_n((\Atplus/\xi_{\alpha_1}^n \wotimes_{\Zp} \cdots \wotimes_{\Zp}\Atplus/\xi_{\alpha_\delta}^n)[1/p]),$$
    where the completion is taken with respect to the $p$-adic topology. Now we just note that 
    $$\varprojlim_n((\Atplus/\xi_{\alpha_1}^n \wotimes_{\Zp} \cdots \wotimes_{\Zp}\Atplus/\xi_{\alpha_\delta}^n)[1/p])=\varprojlim_n(\Atplus/\xi_{\alpha_1}^n[1/p] \wotimes_{\Qp} \cdots \wotimes_{\Qp}\Atplus/\xi_{\alpha_\delta}^n[1/p])$$
    by lemma \ref{unit ball}, and that for all $\alpha \in \Delta$ and $n \geq 1$, $(\Bdrplus)_{\alpha}/\ker(\theta_\alpha)^n \simeq \Atplus/\xi_{\alpha}^n[1/p]$, so that 
    $$\varprojlim_n(\Atplus/\xi_{\alpha_1}^n[1/p] \wotimes_{\Qp} \cdots \wotimes_{\Qp}\Atplus/\xi_{\alpha_\delta}^n[1/p]) = \wotimes_{\Qp}^{\alpha \in \Delta}\Bdrplus$$
    in the category of Fréchet spaces.
\end{proof}

We quickly recap some properties and definitions attached to the ring $\BdrplusDel$, coming from \S 4 of \cite{BriChiaMaz21}. We let $t_\Delta:=\prod_{\alpha \in \Delta}t_\alpha$ and we let $\BdrDel:=\BdrplusDel[1/t_\Delta]$. Since $\G_{\Kunder,\Delta}$ acts on $t_\Delta$ by $g(t) = \chi_{\Delta}(g)\cdot t_\Delta$, the action of $\G_{\Kunder,\Delta}$ on $\BdrplusDel$ extends to an action on $\BdrDel$. We endow $\BdrDel = \varinjlim_nt_\Delta^{-n}\BdrplusDel$ with the injective limit topology. The ring $\BdrDel$ is endowed with a filtration indexed by $\Z$ and defined by
$$\Fil^j\BdrDel = \varinjlim_{i \geq j}t_\Delta^{-i}\Fil^{i\delta+j}\BdrplusDel$$
where $\Fil^i\BdrplusDel = \ker(\theta_\Delta)^i$. The filtration thus defined on $\BdrDel$ is decreasing, separated, exhaustive, and stable by Galois. We let $\B_{\HT,\Delta}:= \gr \BdrDel$.

If $V$ is a $p$-adic representation of $\G_{\Kunder,\Delta}$, we let $\D_{\dR,\Delta}(V):= (\BdrDel \otimes_{\Qp}V)^{\G_{\Kunder,\Delta}}$, which is a $K_\Delta$-module, endowed with a filtration coming from the one on $\BdrDel$. By $\BdrDel$-linearity, the inclusion $\D_{\dR,\Delta}(V) \subset \BdrDel \otimes_{\Qp}V$ induces a $\G_{\Kunder,\Delta}$, $\BdrDel$-linear map
$$\alpha_{\dR,\Delta}(V): \BdrDel \otimes_{K_\Delta} \D_{\dR,\Delta}(V) \ra \BdrDel \otimes_{\Qp}V.$$
We define similarly a map $\alpha_{\HT,\Delta}(V)$, where $\D_{\HT,\Delta}(V) = (\B_{\HT,\Delta} \otimes_{\Qp}V)^{\G_{\Kunder,\Delta}}$. 

\begin{prop}
We have:
\begin{itemize}
    \item  $\gr^r\BdrDel \simeq \bigoplus_{\n = (n_\alpha)_{\alpha \in \Delta} \in \Z^\Delta , \sum_\alpha n_\alpha = r}C_\Delta t_\Delta^{\n}$;
    \item $\B_{\HT,\Delta}\simeq C_\Delta[t_\alpha,t_\alpha^{-1}]_{\alpha \in \Delta} \simeq \oplus_{\n \in \Z^\Delta}C_\Delta(\n)$ as $\G_{\Kunder,\Delta}$-modules.
\end{itemize}
\end{prop}
\begin{proof}
    See \cite[Prop. 4.14]{BriChiaMaz21}.
\end{proof}

Note that \cite[Proposition 4.17]{BriChiaMaz21} shows that the $K_\Delta$-modules $\D_{\dR,\Delta}(V)$ and $\D_{\HT,\Delta}(V)$ are of finite type, and that the maps $\alpha_{\dR,\Delta}(V)$ and $\alpha_{\HT,\Delta}(V)$ are injective. A representation $V$ of $\G_{\Kunder,\Delta}$ is said to be de Rham (resp. Hodge-Tate) when the map $\alpha_{\dR,\Delta}(V)$ (resp. $\alpha_{\HT,\Delta}(V)$) is bijective. 

The following proposition generalizes \cite[Proposition 4.17]{BriChiaMaz21} for Noetherian families admitting a Galois stable lattice:

\begin{prop}\label{pro de rham relative module finite}
    Let $S$ be a Noetherian $\Qp$-Banach algebra, with unit ball $\O_S$, let $T$ be a free $\O_S$-module of rank $d$, endowed with a continuous $\O_S$-linear action of $\G_{\Kunder,\Delta}$, and let $V = T[1/p]$. Then $\D_{\dR,\Delta}(V)$ and $\D_{\HT,\Delta}(V)$ are finite $S \otimes_{\Qp}K_\Delta$-modules.
\end{prop}
\begin{proof}
    We follow the proof of \cite[Prop. 4.3.2]{Bellovinsheaves}. We start by noticing that
    $$\D_{\HT,\Delta}(V) = \oplus_{\j \in \Z^\Delta}(\D_{\Sen,\Delta}(V)\cdot t_\Delta^{\j})^{\Gamma_{\Kunder,\Delta}}=\oplus_{\j \in \Z^\Delta}(\D_{\Sen,\Delta}(V))^{\Gamma_{\Kunder,\Delta}=\chi_{\Delta}^{-\j}},$$
    so that $\D_{\HT,\Delta}(V) \subset \D_{\Sen,\Delta}(V)$ which is a finite module over $S \otimes_{\Qp}K_\Delta$, which is Noetherian since $S$ is a Noetherian $\Qp$-algebra and $K_\Delta$ is of finite type over $\Qp$. This means that $\D_{\HT,\Delta}(V)$ is a finite $S \otimes_{\Qp}K_\Delta$-module.

    For the proof regarding $\D_{\dR,\Delta}(V)$ we can follow the proof of \cite[Prop. 4.3.2]{Bellovinsheaves} directly, as the arguments are the same.
\end{proof}

Given a $p$-adic representation $V$ of $\G_{\Kunder,\Delta}$, the authors of \cite{BriChiaMaz21} define in \S 5 of ibid a module attached to $V$ which we will denote by $\D_{\dif,\Delta}^+(V)$ as follows: $\D_{\dif,\Delta}^+(V)=\varprojlim_r((\BdrplusDel \otimes_{\Qp}V)^{H_{\Kunder,\Delta}})/\Fil^r)^{\fin}$, which means that we're taking the inverse limit with respect to the filtration induced by $\BdrplusDel$ of the finite vectors of the quotients by this filtration. Moreover, this is a $K_{\infty,\Delta}[\![t_\alpha]\!]_{\alpha \in \Delta}$-module, and if $(\BdrplusDel \otimes_{\Qp}V)^{H_{\Kunder,\Delta}}$ is free of rank $d$ over $(\BdrplusDel)^{H_{\Kunder,\Delta}}$ then it is free of rank $d$ over $K_{\infty,\Delta}[\![t_\alpha]\!]_{\alpha \in \Delta}$. 

More generally, given a free $\BdrplusDel$-representation $W$ of $\G_{\Kunder,\Delta}$, we can attach to $W$ a $K_{\infty,\Delta}[\![t_\alpha]\!]_{\alpha \in \Delta}$-module 
$$W_f:=\varprojlim_r(W^{H_{\Kunder,\Delta}})/\Fil^r)^{\fin}$$
which is free of rank $d$ over $K_{\infty,\Delta}[\![t_\alpha]\!]_{\alpha \in \Delta}$.

Theorem 5.11 of ibid shows that the functor $W \mapsto W_f$ is an equivalence of categories between free $\BdrplusDel$-representations of $\Gamma_{\Kunder,\Delta}$ of finite rank and free $K_{\infty,\Delta}[\![t_\alpha]\!]_{\alpha \in \Delta}$-representations of $\Gamma_{\Kunder,\Delta}$ of finite rank. 

As in \S \ref{subs multi Sen}, we can recover those constructions using locally and pro-analytic vectors.

\begin{lemm}
    We have $((\BdrplusDel)^{H_{\Kunder,\Delta}})^{\Gamma_{\Kunder,\Delta}-\pa} = K_{\infty,\Delta}[\![t_\alpha]\!]_{\alpha \in \Delta}$.
\end{lemm}
\begin{proof}
    By proposition \ref{prop Bdrdel=prod of Bdr}, lemma \ref{inv and tensors} and corollary \ref{coro computes locanamulti}, we have that
    \[((\BdrplusDel)^{H_{\Kunder,\Delta}})^{\Gamma_{\Kunder,\Delta}-\pa}=(\Bdrplus)=\wotimes_{\Qp}^{\alpha \in \Delta}(\Bdrplus)^{H_{K_\alpha}})^{\Gamma_{K_\alpha}-\pa}. \]
    By \cite[Prop. 2.6]{PoratHilbert}, we have that $((\Bdrplus)^{H_K})^{\Gamma_K-\pa} = K_\infty[\![t]\!]$ which finishes the proof.
\end{proof}

\begin{lemm}
    If $W$ is a free $(\BdrplusDel)^{H_{\Kunder,\Delta}}$-representation of $\Gamma_{\Kunder,\Delta}$ of finite rank, then $W_f = W^{\Gamma_{\Kunder,\Delta}-\pa}$.
\end{lemm}
\begin{proof}
    Since $\BdrplusDel$ is a Fréchet space whose Fréchet topology is given by the Banach topology on each $\BdrplusDel/\Fil^r\BdrplusDel$, we have that $W^{\Gamma_{\Kunder,\Delta}-\pa} = \varprojlim_r(W/\Fil^rW)^{\la}$ by definition of pro-analytic vectors. By Proposition \ref{prop la recovers fin Sen CDelta}, each $(W/\Fil^rW)^{\la}$ is equal to $(W/\Fil^rW)^{\fin}$ so that we recover the definition of $W_f$.
\end{proof}

As a direct consequence of this lemma, we obtain the following corollary, which shows that pro-analytic vectors recover Sen theory for $\BdrplusDel$-representations:
\begin{coro}
    If $V$ is a $p$-adic representation of $\G_{\Kunder,\Delta}$ then $\D_{\dif,\Delta}^+(V)=((\BdrplusDel \otimes_{\Qp}V)^{H_{\Kunder,\Delta}})^{\Gamma_{\Kunder,\Delta}-\pa}$.
\end{coro}

\section{Applications to crystalline and semistable representations}

Let now $A$ be an affinoid algebra over $\Qp$ and let $\O_A$ be the valuation ring of $A$. Let $V$ be a free representation of $\G_{\underline{K},\Delta}$ over $A$, such that $V$ contains an $\O_A$-lattice of rank $d$, stable for the action of $\G_{\Kunder,\Delta}$. Then by Theorem \ref{theo overconv in fam} we can associate to $V$ an overconvergent $(\phi_{\Delta},\Gamma_{\underline{K},\Delta})$-module over $A\wotimes_{\Qp}\B^{\dagger,\r}_{\underline{K}}$, for some $\r\in\N[1/p]^{\Delta}$, that we will call $\mathbf{D}^{\dagger,\r}_{\Delta}(V)$. We define
$$\mathbf{D}^{\dagger}_{\Delta}(V)=\bigcup_{\r\in\N[1/p]^{\delta}}\mathbf{D}^{\dagger,\r}_{\Delta}(V).$$
We set
$$\mathbf{D}_{\rm{rig},\underline{K},\Delta}^{\dagger,\r}(V)=(A\wotimes_{\Qp}\B^{\dagger,\r}_{\rm{rig},\underline{K},\Delta})\wotimes_{\mathbf{B}^{\dagger,\underline{r}}_{\underline{K},\Delta}}\mathbf{D}^{\dagger,\r}_{\Delta}(V),$$
$$\mathbf{D}_{\rm{rig},\underline{K},\Delta}^{\dagger}(V)=\bigcup_{\r\in\N[1/p]^{\delta}}\mathbf{D}_{\rm{rig},\underline{K},\Delta}^{\dagger,\r}(V)$$
$$\mathbf{D}_{\rm{log},\underline{K},\Delta}^{\dagger,\r}(V)=(A\wotimes_{\Qp}\B^{\dagger,\r}_{\rm{log},\underline{K}})\wotimes_{\mathbf{B}^{\dagger,\underline{r}}_{\underline{K}}}\mathbf{D}^{\dagger,\r}_{\Delta}(V),$$
and
$$\mathbf{D}_{\rm{log},\underline{K},\Delta}^{\dagger}(V)=\bigcup_{\r\in\N[1/p]^{\delta}}\mathbf{D}_{\rm{log},\underline{K},\Delta}^{\dagger,\r}(V).$$

In this section we will prove that 
\begin{equation}\label{comp crys phigamma}
    \mathbf{D}_{\mathrm{crys},\Delta}(V)=(\mathbf{D}^{\dagger}_{\mathrm{rig},\Delta}(V)[1/t_{\Delta}])^{\Gamma_{\Kunder,\Delta}},
\end{equation}
and 
\begin{equation}\label{comp crys phigamma}
    \mathbf{D}_{\mathrm{st},\Delta}(V)=(\mathbf{D}^{\dagger}_{\mathrm{log},\Delta}(V)[1/t_{\Delta}])^{\Gamma_{\Kunder,\Delta}},
\end{equation}
This is a multivariable generalization of the classical work of \cite[Théorème 0.2]{Ber02} and \cite[Theorem 4.2.9]{Bellovinsheaves}. It is analogous to the result obtained in \cite[Theorem 5.23]{BriChiaMaz21}, that relates multivariable $(\phi,\Gamma)$-modules and multivariable de Rham representations.\\

To compare $\DcrysDel$ and $\DrigDel$ we will pass from the module $(V\otimes_{\Qp}(A\wotimes_{\Qp}\BtrigDel))^{\G_{\Kunder,\Delta}}$. To do so one can use the following

\begin{prop}
\label{prop frobreg Arig}
Let $A$ be a $\Qp$-Banach algebra, and let $\O_A$ be its valuation ring. Then $\cap_{k=0}^{+\infty}p^{-hk}(\O_A\hat{\otimes}_{\Zp}\At_{\rig}^{\dagger,p^{-k}s}) \subset A \hat{\otimes}_{\Qp}\Btrigplus$. 
\end{prop}
\begin{proof}
This is \cite[Coro. 4.2.12]{Bellovinsheaves}.
\end{proof}

As in \cite[Prop. 3.2]{Ber02} and \cite[Prop. 4.2.13]{Bellovinsheaves}, the following ``Frobenius regularization'' is a direct consequence from Proposition \ref{prop frobreg Arig}, though we cannot apply either of \cite[Prop. 3.2]{Ber02} nor \cite[Prop. 4.2.13]{Bellovinsheaves} directly.

\begin{prop}\label{prop Frobenius reg}
Let $d_1$ and $d_2$ be two positive integers. Let $A$ be a $\Qp$-Banach algebra. Let $M \in \M_{d_2\times d_1}(A \hat{\otimes}_{\Qp}\Bt_{\log,\Delta}^\dagger)$ and suppose that for all $i \in \Delta$, there exists $h_i \geq 1$ and $P_i \in \GL_{d_1}(A \otimes_{\Qp}\underline{F}_{\Delta})$ such that $M=\phi_i^{-h_i}(M)P_i$. Then $M \in \M_{d_2\times d_1}(A\hat{\otimes}_{\Qp}\Bt_{\log,\Delta}^+)$. 
\end{prop}
\begin{proof}
We use exactly the same strategy as in the proof of \cite[Prop. 3.2]{Ber02} but since we are in the multivariable case we need to use it $\delta$ times. 

Let us write $P_\delta=(p_{ij})$, $M = (m_{ij})$, and $m_{ij} = \sum_{\underline{k}\ll\infty} m_{ij,\underline{k}}\boldsymbol{Y}^{\underline{k}}$. Let $\r=(r_1,\ldots,r_{\delta})$ be such that all the $m_{ij,\underline{k}}$ belong to $A\hat{\otimes}_{\Qp}\Bt_{\rig,\Delta}^{\dagger,\r}$. Let $I_1,\ldots,I_{\delta-1}$ be respectively compact subintervals of $[r_i,+\infty[$, with $\min(I_i)=r_i$, for $i=1,\dots,\delta$. We consider the $m_{ij,\underline{k}}$ as elements of $A\hat{\otimes}_{\Qp}\Bt^{I_1}\hat{\otimes}_{\Qp}\ldots\hat{\otimes}_{\Qp}\Bt^{I_{\delta-1}}\hat{\otimes}_{\Qp}\Bt_{\rig}^{\dagger,r_\delta}$. Note that $A_\delta:=A\hat{\otimes}_{\Qp}\Bt^{I_1}\hat{\otimes}_{\Qp}\ldots\hat{\otimes}_{\Qp}\Bt^{I_{\delta-1}}$ is a $\Qp$-Banach algebra as every algebra considered in the tensor product is a Banach algebra. We let $\O_{A_\delta}$ denote its valuation ring.

Let $f_0 \in \Z$ be such that $p^{h_0}P \in \M_{d_1}(\O_{A_\delta}\otimes_{\Zp} \O_{\underline{F}_\Delta})$. Let $s_\delta\in\Z$ be such that the degree of the $m_{ij}$ in $Y_\delta$ are bounded above by $s_\delta$, and let $f = f_0+h_\delta\cdot s_\delta$. The relation between $M$ and $P_\delta$ translates into
$$\phi_\delta^{-h_\delta}(m_{i1})p_{1j}+\ldots+\phi_\delta^{-h_\delta}(m_{id_1})p_{d_2j}=m_{ij} $$
for all $i \leq d_1, j \leq d_2$. Since $\phi_\delta^{-h_\delta}(\mathbf{Y}^{\underline{k}}) = p^{-h_\delta\cdot k_\delta}\mathbf{Y}^{\underline{k}}$, we obtain that if $m_{ij} \in p^{-c}\O_{A_{\delta}}\hat{\otimes}_{\Zp}\At_{\rig}^{\dagger,r_\delta}$, then we have $p^{f_0}p_{ij}  \in \O_{A_{\delta}}\otimes_{\Zp} \O_{\underline{F}_\Delta}$, and $\phi_\delta^{-h_\delta}(m_{ij,\underline{k}}) \in p^{-c}\O_{A_{\delta}} \hat{\otimes}_{\Zp}\At_{\rig}^{\dagger,r_\delta/p^{h_\delta}}$ so that $m_{ij,\underline{k}} \in p^{-f-c}\O_{A_{\delta}} \hat{\otimes}_{\Zp}\At_{\rig}^{\dagger,r_\delta/p^{h_\delta}}$. By iterating, we find that the $m_{ij,\underline{k}}$ belong to $\cap_{k=0}^{+\infty}p^{-fk-c}\O_{A_\delta} \hat{\otimes}_{\Zp}\At_{\rig}^{\dagger,r_\delta p^{-h_\delta k}}$. By proposition \ref{prop frobreg Arig}, this means that the $m_{ij,\underline{k}}$ actually belong to $A\hat{\otimes}_{\Qp}\Bt^{I_1}\hat{\otimes}_{\Qp}\ldots\hat{\otimes}_{\Qp}\Bt^{I_{\delta-1}}\hat{\otimes}\Btrigplus$. Since this is true for all $I_i$ compact subintervals of $[r_i,+\infty[$, with $\min(I_i)=r_i$, $i=1,\dots,\delta$, this means by \cite[Proposition 1.1.29]{Emer} that the $m_{ij,\underline{k}}$ belong to $A\hat{\otimes}_{\Qp}\Bt_{\rig,\Delta}^{\dagger,\r}\hat{\otimes}_{\Qp}\Btrigplus$. 

We apply then the same reasoning to each of the components, so that in the end the $m_{ij,\k}$ belong to $A\hat{\otimes}_{\Qp}\Bt_{\rig,\Delta}^+$, and thus this concludes the proof.
\end{proof}
One can then use Frobenius regularity to compare periods over $A\wotimes_{\Qp}\BtlogplusDel$ to periods over $A\wotimes_{\Qp}\BtlogDel$.
\begin{lemm}
    Let $A$ be a complete DVF over $\Qp$, with perfect residue field. We have an equality
    \begin{equation*}
        ((A\wotimes_{\Qp}\BtlogplusDel)\wotimes_{A}V)^{\G_{\underline{K},\Delta}}=((A\wotimes_{\Qp}\BtlogDel)\wotimes_{A}V)^{\G_{\underline{K},\Delta}}.
    \end{equation*}
\end{lemm}
\begin{proof}

   Using Frobenius regularization (Proposition \ref{prop Frobenius reg}), the proof is the same as the one of \cite[Proposition 4.2.16]{Bellovinsheaves}
\end{proof}
Using the lemma above one can prove the following lemma as in \cite[Corollary 4.2.18]{Bellovinsheaves}.
\begin{prop}\label{comparison invariants Blogplus Blogrig}
    Let $A$ be an affinoid algebra over $\Qp$. We have an isomorphism
    \begin{equation*}
        ((A\wotimes_{\Qp}\BtlogplusDel)\wotimes_{A}V)^{\G_{\underline{K},\Delta}}=((A\wotimes_{\Qp}\BtlogDel)\wotimes_{A}V)^{\G_{\underline{K},\Delta}}.
    \end{equation*}
\end{prop}
The following theorem explicates the relation between multivariable crystalline representations and multivariable $(\phi,\Gamma)$-modules. It is an analogue of \cite[Proposition 4.2.19]{Bellovinsheaves}. 
\begin{thm}
    In the situation above the natural map
    $$(\mathbf{D}_{\rm{log},\underline{K},\Delta}^{\dagger}(V)[1/t_{\Delta}])^{\Gamma_{\underline{K},\Delta}}\rightarrow((A\wotimes_{\Qp}\Bt_{\log,\Delta}^\dagger)\wotimes_AV[1/t_{\Delta}])$$ is an isomorphism.
\end{thm}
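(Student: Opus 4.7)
The plan is to deduce this identity directly from the multivariable overconvergence theorem by base-changing to the log Robba ring, inverting $t_\Delta$, and then taking Galois invariants factor-by-factor. First, I would invoke Theorem \ref{theo overconv in fam} (with $S=A$): for $\s\geq\s(V)$, the module $\D^{\dagger,\s}_{\Kunder,\Delta}(V)$ is a projective $A\hat{\otimes}_{\Qp}\B^{\dagger,\s}_{\Kunder,\Delta}$-module of rank $d$, and the natural map
$$(A\hat{\otimes}_{\Qp}\Bt^{\dagger,\s}_{\Kunder,\Delta})\otimes_{A\hat{\otimes}_{\Qp}\B^{\dagger,\s}_{\Kunder,\Delta}}\D^{\dagger,\s}_{\Kunder,\Delta}(V)\longrightarrow (A\hat{\otimes}_{\Qp}\Bt^{\dagger,\s}_{\Kunder,\Delta})\otimes_A V$$
is an isomorphism. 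Tensoring up to $A\hat{\otimes}_{\Qp}\Bt^\dagger_{\log,\Delta}$ (and passing to the limit in $\s$) and then inverting $t_\Delta$ yields an analogous isomorphism
$$(A\hat{\otimes}_{\Qp}\Bt^\dagger_{\log,\Delta})[1/t_\Delta]\otimes_{A\hat{\otimes}_{\Qp}\B^\dagger_{\log,\Kunder,\Delta}[1/t_\Delta]}\D^\dagger_{\log,\Kunder,\Delta}(V)[1/t_\Delta]\simeq (A\hat{\otimes}_{\Qp}\Bt^\dagger_{\log,\Delta})[1/t_\Delta]\otimes_A V.$$

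Next I would take $H_{\Kunder,\Delta}$-invariants on both sides, which is the main technical step. Since $H_{\Kunder,\Delta}=\prod_{\alpha\in\Delta}H_{K_\alpha}$ acts only on the $\Bt^\dagger_{\log,\Delta}$-factor, and since $\Bt^\dagger_{\log,\Delta}$ is by construction a completed tensor product of LF $H_{K_\alpha}$-rings, Corollary \ref{cor invariant of frechet factor by factor} applied factor by factor (and Remark \ref{invariants LF spaces} to handle the LF topology once $t_\Delta$ has been inverted) yields $(A\hat{\otimes}_{\Qp}\Bt^\dagger_{\log,\Delta})^{H_{\Kunder,\Delta}}=A\hat{\otimes}_{\Qp}\B^\dagger_{\log,\Kunder,\Delta}$. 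Using projectivity of $\D^\dagger_{\log,\Kunder,\Delta}(V)[1/t_\Delta]$ over $A\hat{\otimes}_{\Qp}\B^\dagger_{\log,\Kunder,\Delta}[1/t_\Delta]$ to reduce to the direct-summand of a free module, this identity propagates through the tensor product and shows that the $H_{\Kunder,\Delta}$-invariants of the left-hand side are exactly $\D^\dagger_{\log,\Kunder,\Delta}(V)[1/t_\Delta]$. Taking the further $\Gamma_{\Kunder,\Delta}$-invariants on both sides and identifying the resulting $\G_{\Kunder,\Delta}$-invariants on the right yields the desired isomorphism.

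The main obstacle is the commutation of $H_{\Kunder,\Delta}$-invariants with the completed tensor product in an LF setting once $t_\Delta$ has been inverted. The passage from Fréchet to LF after inverting $t_\Delta$ is the delicate step, since one must ensure that the hypotheses of Lemma \ref{inv and tensors} and Remark \ref{invariants LF spaces} are in force for each factor: each copy of $\Bt^{\dagger}_{\log}$ is a strict union of Fréchet $H_{K_\alpha}$-spaces whose defining diagrams are $H_{K_\alpha}$-equivariant, and inverting the single-factor element $t_\alpha$ preserves this structure. The projectivity of $\D^\dagger_{\log,\Kunder,\Delta}(V)$ provided by Theorem \ref{theo overconv in fam} is what allows one to transport the factor-by-factor invariants identity through the tensor product without losing anything, and is the key input which upgrades the classical one-variable argument of \cite[Proposition 4.2.19]{Bellovinsheaves} to the multivariable framework.
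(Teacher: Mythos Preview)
Your argument has a genuine gap at the $H_{\Kunder,\Delta}$-invariants step. You claim that
$(A\hat{\otimes}_{\Qp}\Bt^\dagger_{\log,\Delta})^{H_{\Kunder,\Delta}}=A\hat{\otimes}_{\Qp}\B^\dagger_{\log,\Kunder,\Delta}$,
but the factor-by-factor computation you invoke (Corollary \ref{cor invariant of frechet factor by factor} and Remark \ref{invariants LF spaces}) only gives $(\Bt^\dagger_{\log,\Delta})^{H_{\Kunder,\Delta}}=\Bt^\dagger_{\log,\Kunder,\Delta}$, the \emph{perfect} ring, which is strictly larger than the imperfect ring $\B^\dagger_{\log,\Kunder,\Delta}$ over which $\D^\dagger_{\log,\Kunder,\Delta}(V)$ is defined. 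Consequently, after taking $H_{\Kunder,\Delta}$-invariants of the left-hand side you are left with
$(A\hat\otimes_{\Qp}\Bt^\dagger_{\log,\Kunder,\Delta})\otimes_{A\hat\otimes_{\Qp}\B^\dagger_{\log,\Kunder,\Delta}}\D^\dagger_{\log,\Kunder,\Delta}(V)[1/t_\Delta]$,
not $\D^\dagger_{\log,\Kunder,\Delta}(V)[1/t_\Delta]$ itself. Showing that the $\Gamma_{\Kunder,\Delta}$-invariants of this extended module agree with those of $\D^\dagger_{\log,\Kunder,\Delta}(V)[1/t_\Delta]$ is precisely the substantive part of the theorem, and your argument does not address it.

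The paper's proof handles exactly this point, and the mechanism is not formal. After reducing to the free case and clearing denominators in $t_\Delta$, one passes to $\Gamma_{\Kunder,\Delta}$-pro-analytic vectors: by Corollary \ref{coro computes locanamulti} together with the one-variable computation of \cite[Proposition 5.1]{Poyetonlocanaperiods}, the pro-analytic vectors in $A\hat\otimes_{\Qp}\Bt^{\dagger,\r}_{\log,\Kunder,\Delta}$ are the increasing union $\bigcup_n\phi_\Delta^{-n}(A\hat\otimes_{\Qp}\B^{\dagger,p^{n\delta}\r}_{\log,\Kunder,\Delta})$. One then uses finiteness of $\D_{\dR,\Delta}(V)$ (Proposition \ref{pro de rham relative module finite}) to land the $\Gamma$-invariants in a single level $\phi_\Delta^{-n}(\cdots)$, and finally stability under $\phi_\Delta$ to descend to $n=0$, i.e.\ to $\D^\dagger_{\log,\Kunder,\Delta}(V)$. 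None of these three ingredients --- pro-analytic decompletion, finiteness, and Frobenius stability --- appears in your proposal, and they cannot be replaced by the factor-by-factor invariants lemma.
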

\begin{proof}
    Replacing $K_{\alpha}$, for $\alpha\in\Delta$, with finite extensions we may assume that $\mathbf{D}_{\rm{log},\underline{K},\Delta}^{\dagger}$ is free. After multiplying $V$ by a suitable power of $t_{\Delta}$ we may assume that 
    $$(\mathbf{D}_{\rm{log},\underline{K},\Delta}^{\dagger}(V)[1/t_{\Delta}])^{\Gamma_{\underline{K},\Delta}}=(\mathbf{D}_{\rm{log},\underline{K},\Delta}^{\dagger}(V))^{\Gamma_{\underline{K},\Delta}}.$$
    Note moreover that
    $$((A\wotimes_{\Qp}\Bt_{\log,\Delta}^\dagger)\wotimes_AV)^{H_{\underline{K},\Delta}}=((A\wotimes_{\Qp}\Bt_{\log,\Delta}^\dagger)\wotimes_{A\wotimes_{\Qp}\B^{\dagger}_{\rm{log},\underline{K},\Delta}}\mathbf{D}_{\rm{log},\underline{K},\Delta}^{\dagger}(V))$$
    by overconvergence.
    If we pove that 
    $$((A\wotimes_{\Qp}\Bt_{\log,\Delta}^\dagger)\wotimes_{A\wotimes_{\Qp}\B^{\dagger}_{\rm{log},\underline{K},\Delta}}\mathbf{D}_{\rm{log},\underline{K},\Delta}^{\dagger}(V))^{\Gamma_{\underline{K},\Delta}}=(\mathbf{D}_{\rm{log},\underline{K},\Delta}^{\dagger}(V))^{\Gamma_{\underline{K},\Delta}}$$
    thus we get the claim.
    Taking $\Gamma_{\underline{K},\Delta}$-pro-analytic vectors, and using Proposition 5.1 of \cite{Poyetonlocanaperiods} and Corollary \ref{coro computes locanamulti}, we get
    \begin{dmath*}((A\wotimes_{\Qp}\Bt_{\log,\Delta}^{\dagger,\r})\wotimes_{A\wotimes_{\Qp}\B^{\dagger,\r}_{\rm{log},\underline{K},\Delta}}\mathbf{D}_{\rm{log},\underline{K},\Delta}^{\dagger}(V))^{\Gamma_{\underline{K},\Delta}}=(\bigcup_{n\in\N}\phi_{\Delta}^{-n}(A\wotimes_{\Qp}\B_{\log,\underline{K},\Delta}^{\dagger, p^{n\delta}\r})\wotimes_{A\wotimes_{\Qp}\B^{\dagger,\r}_{\rm{log},\underline{K},\Delta}}\mathbf{D}_{\rm{log},\underline{K},\Delta}^{\dagger,\r}(V))^{\Gamma_{\underline{K},\Delta}}.\end{dmath*}
     Since $((A\wotimes_{\Qp}\Bt_{\log,\Delta}^\dagger)\wotimes_{A\wotimes_{\Qp}\B^{\dagger}_{\rm{log},\underline{K},\Delta}}\mathbf{D}_{\rm{log},\underline{K},\Delta}^{\dagger}(V))^{\Gamma_{\underline{K},\Delta}}$ is a finite $A$-module (as $\D_{\rm{dR}(V)}$ is finite by Proposition \ref{pro de rham relative module finite}), it will be contained in $$(\phi_{\Delta}^{-n}(A\wotimes_{\Qp}\B_{\log,\underline{K},\Delta}^{\dagger, p^{n\delta}\s})\wotimes_{A\wotimes_{\Qp}\B^{\dagger,\r}_{\rm{log},\underline{K},\Delta}}\mathbf{D}_{\rm{log},\underline{K},\Delta}^{\dagger,\s}(V))^{\Gamma_{\underline{K},\Delta}}$$ for some $\s\in\N[1/p]^{\delta}$. But $\phi^{n}_{\Delta}$ induces an isomorphism
     \begin{dmath*}
     (\phi_{\Delta}^{-n}(A\wotimes_{\Qp}\B_{\log,\underline{K},\Delta}^{\dagger, p^{n\delta}\s})\wotimes_{A\wotimes_{\Qp}\B^{\dagger,\r}_{\rm{log},\underline{K},\Delta}}\mathbf{D}_{\rm{log},\underline{K},\Delta}^{\dagger,\s}(V))^{\Gamma_{\underline{K},\Delta}}=(\phi_{\Delta}^{-n}(A\wotimes_{\Qp}\B_{\log,\underline{K},\Delta}^{\dagger, p^{n\delta}\s})\wotimes_{A\wotimes_{\Qp}\B^{\dagger,\r}_{\rm{log},\underline{K},\Delta}}\mathbf{D}_{\rm{log},\underline{K},\Delta}^{\dagger,\s}(V))^{\Gamma_{\underline{K},\Delta}}.\end{dmath*}
     As $((A\wotimes_{\Qp}\Bt_{\log,\Delta}^\dagger)\wotimes_{A\wotimes_{\Qp}\B^{\dagger}_{\rm{log},\underline{K},\Delta}}\mathbf{D}_{\rm{log},\underline{K},\Delta}^{\dagger}(V))^{\Gamma_{\underline{K},\Delta}}$ is stable by $\phi_{\Delta}$ we can conclude. 
\end{proof}
Combining the theorem above with Propostition \ref{comparison invariants Blogplus Blogrig} we obtain what we wanted to prove.
\begin{cor}
    Let $V$ be a representation of $\G_{\underline{K},\Delta}$ over $A$ admitting an invariant $\O_A$-lattice. Then 
    \begin{equation*}\label{comp crys phigamma}
    \mathbf{D}_{\mathrm{crys},\Delta}(V)\cong(\mathbf{D}^{\dagger}_{\mathrm{rig},\Delta}(V)[1/t_{\Delta}])^{\Gamma_{\underline{K},\Delta}},
\end{equation*}
and 
\begin{equation*}\label{comp crys phigamma}
    \mathbf{D}_{\mathrm{st},\Delta}(V)\cong(\mathbf{D}^{\dagger}_{\mathrm{log},\Delta}(V)[1/t_{\Delta}])^{\Gamma_{\underline{K},\Delta}}.
\end{equation*}
\end{cor}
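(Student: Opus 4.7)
The plan is to derive both isomorphisms by chaining together four ingredients: the definition of $\D_{\st,\Delta}(V)$ (resp.\ $\D_{\crys,\Delta}(V)$); a Frobenius-regularization step identifying these with the $\G_{\Kunder,\Delta}$-invariants of $(A\wotimes_{\Qp}\BtlogplusDel[1/t_\Delta])\wotimes_A V$ (resp.\ $(A\wotimes_{\Qp}\BtrigplusDel[1/t_\Delta])\wotimes_A V$); Proposition \ref{comparison invariants Blogplus Blogrig} to pass from the ``rig-plus'' coefficients to the full $\BtlogDel$ (resp.\ $\BtrigDel$) coefficients; and finally the theorem just established to land in the $\Gamma_{\Kunder,\Delta}$-invariants of $\D^\dagger_{\log,\Delta}(V)[1/t_\Delta]$ (resp.\ $\D^\dagger_{\rig,\Delta}(V)[1/t_\Delta]$).

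I focus on the semistable case first. By definition $\D_{\st,\Delta}(V) = ((A\wotimes_{\Qp}\B_{\st,\Delta})\wotimes_A V)^{\G_{\Kunder,\Delta}}$. Since $\BtrigplusDel \subset \B_{\max,\Delta}^+$, we have an inclusion $\BtlogplusDel[1/t_\Delta] \hookrightarrow \B_{\st,\Delta}$. I claim this becomes an equality after taking $\G_{\Kunder,\Delta}$-invariants of the tensor product with $V$: any $\G_{\Kunder,\Delta}$-invariant element lies in a finite-rank $A\otimes\underline{F}_\Delta$-module on which each partial Frobenius $\phi_\alpha$ acts via a matrix with coefficients in $A\otimes\underline{F}_\Delta$ (using that $(\B_{\st,\Delta})^{\G_{\Kunder,\Delta}} = \underline{F}_\Delta$), so after clearing denominators by a suitable power of $t_\Delta$, the matrix identity $M = \phi_\alpha^{-h_\alpha}(M)P_\alpha$ required by Proposition \ref{prop Frobenius reg} holds for each $\alpha \in \Delta$. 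Applying that proposition forces the invariant elements to lie inside $(A\wotimes_{\Qp}\BtlogplusDel[1/t_\Delta])\wotimes_A V$.

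Granting this identification, Proposition \ref{comparison invariants Blogplus Blogrig} (whose proof remains valid after inverting $t_\Delta$, since the Frobenius-regularization argument driving it is unaffected by this localization) gives
\[ ((A\wotimes_{\Qp}\BtlogplusDel[1/t_\Delta])\wotimes_A V)^{\G_{\Kunder,\Delta}} = ((A\wotimes_{\Qp}\BtlogDel[1/t_\Delta])\wotimes_A V)^{\G_{\Kunder,\Delta}}, \]
and the theorem just above identifies this right-hand side with $(\D^\dagger_{\log,\Delta}(V)[1/t_\Delta])^{\Gamma_{\Kunder,\Delta}}$. The crystalline comparison follows by running the identical argument with $\B_{\st,\Delta}$, $\BtlogplusDel$, $\BtlogDel$, and $\D^\dagger_{\log,\Delta}$ replaced respectively by $\B_{\max,\Delta}$, $\BtrigplusDel$, $\BtrigDel$, and $\D^\dagger_{\rig,\Delta}$; the required analogue of Proposition \ref{comparison invariants Blogplus Blogrig} without the $Y_\alpha$ variables is obtained by the same proof (or by restriction to the degree-zero component in the $Y_\alpha$ variables).

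The main obstacle is the Frobenius-regularization step. While Proposition \ref{prop Frobenius reg} supplies the matrix criterion, translating $\G_{\Kunder,\Delta}$-invariance into the relation $M = \phi_\alpha^{-h_\alpha}(M)P_\alpha$ with $P_\alpha \in \GL(A\otimes\underline{F}_\Delta)$ for each $\alpha$ requires carefully separating the action of each partial Frobenius and keeping track of poles in $t_\Delta$. This is the multivariable analogue of the classical regularization step of \cite[Prop.\ 3.2]{Ber02} and \cite[Prop.\ 4.2.13]{Bellovinsheaves}, but must be executed for each of the $\delta$ partial Frobenii in turn rather than a single one; once this is set up, the remainder of the argument is a direct chaining of the cited results.
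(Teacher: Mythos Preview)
Your chain for the semistable case is essentially the paper's argument: combine the preceding theorem with Proposition \ref{comparison invariants Blogplus Blogrig} (after inverting $t_\Delta$). The paper simply declares this ``clear'' and does not spell out the passage from $\B_{\st,\Delta}$ (built from $\B_{\max,\Delta}^+$) to $\BtlogplusDel[1/t_\Delta]$; you are right that a Frobenius-regularization step is implicitly needed there, but note that Proposition \ref{prop Frobenius reg} as stated applies to matrices with entries in $A\wotimes_{\Qp}\Bt_{\log,\Delta}^\dagger$, not in $A\wotimes_{\Qp}\B_{\st,\Delta}$. The correct input for this step is the multivariable analogue of $\cap_n \phi^n(\B_{\max}^+)=\Btrigplus$ (i.e.\ Proposition \ref{prop frobreg Arig} applied with the $\B_{\max}$-type rings in place of the $\At_{\rig}^{\dagger,r}$), run factor by factor exactly as in the proof of Proposition \ref{prop Frobenius reg}; your argument goes through once this substitution is made.

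For the crystalline case the paper takes a genuinely different and shorter route: rather than rerunning the entire argument with $\B_{\max,\Delta}$, $\BtrigplusDel$, $\BtrigDel$, $\D^\dagger_{\rig,\Delta}$ in place of their log-counterparts, it deduces the crystalline isomorphism from the semistable one by taking, on both sides, the elements annihilated by the monodromy operators $N_\alpha$ (one for each $\alpha\in\Delta$). Since $\D_{\crys,\Delta}(V)=\D_{\st,\Delta}(V)^{N_\alpha=0,\,\alpha\in\Delta}$ and $\D^\dagger_{\rig,\Delta}(V)=\D^\dagger_{\log,\Delta}(V)^{N_\alpha=0,\,\alpha\in\Delta}$, this one observation finishes the proof. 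Your parallel-argument approach would also work, but it requires checking that the rig-analogues of the theorem and of Proposition \ref{comparison invariants Blogplus Blogrig} hold (which they do, by the same proofs or by restriction to degree zero in the $Y_\alpha$, as you note); the paper's monodromy trick avoids this bookkeeping entirely.
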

\begin{proof}
    The isomorphism for $\mathbf{D}_{\mathrm{st},\Delta}(V)$ is clear. The one for $\mathbf{D}_{\mathrm{crys},\Delta}(V)$ follows taking the elements annihilated by the monodromy operators induced variable by variable.
\end{proof}
\section{Multivariable rings of periods and admissibility}
In this section we discuss the link between admissibility of representations of $\G_{\Kunder,\Delta}$ for multivariable rings of periods, and admissibility for the restriction of the representations of $\G_{\Kunder,\Delta}$ to $\G_{K_\alpha}$ for classical rings of periods, and give some applications.
Let $V$ be a $p$-adic representation of $\G_{\Kunder,\Delta}$ and let $B$ be one of the period rings $\B_{\max}, \B_{\mathrm{st}}$ or $\B_{\dR}$. We use $\B_{\max}$ here instead of the ring $\B_{\crys}$ to treat crystalline representations as the topology of $\B_{\crys}$ is \say{bad} (cfr. \cite[p. 24]{Col98}). Instead $\B_{\max}$ is a strict LF-space, so its completed tensor product is \say{well-behaved}. We can call $\B_{\max}$-admissible representations \emph{crystalline} as the notion of $\B_{\max}$-admissibility and $\B_{\crys}$-admissibility coincide (see ibid.). We denote by $B_{\Delta}$ the completed tensor product over $\Qp$ of $\delta$ copies of $B$. Moreover we set $L_{\alpha}=B^{G_{K_\alpha}}$, for $\alpha\in\Delta$.
We say that $V$ is \emph{admissible} if the map
$$(V\otimes_{\Qp}B_{\Delta})^{\G_{\Kunder,\Delta}}\wotimes_{K_{\Delta}}B_{\Delta}\to V\wotimes_{\Qp} B_{\Delta},$$
induced by the inclusion
$$(V\otimes_{\Qp}B_{\Delta})^{\G_{\Kunder,\Delta}}\subset V\wotimes_{\Qp} B_{\Delta}$$
is an isomorphism.

We recall the following result, which is \cite[Proposition 2.3]{KedlayadeRhamprop}:

\begin{theo}
    Let $B$ be one of the rings $\B_{\mathrm{max}}, \B_{\mathrm{st}}$ or $\B_{\dR}$. Let $V$ be a $p$-adic representation of $\G_{\Kunder,\Delta}$ such that, for each $\alpha \in \Delta$, the restriction of $V$ to $\G_{K_\alpha}$ is $B$-admissible. Then $V$ is $B \otimes_{\Qp} \cdots \otimes_{\Qp}B$-admissible.
\end{theo}

In the other direction, we can prove the following:

\begin{prop}
    Let $B$ be a classical ring of $p$-adic periods (either $\B_{\mathrm{max}}, \B_{\mathrm{st}}$ or $\B_{\dR}$), and let $B_{\Delta}$ denote the completion (for the canonical topology on $B$) of $B \wotimes_{\Qp} \cdots \wotimes_{\Qp}B$. Let $V$ be a $p$-adic representation of $\G_{\Kunder,\Delta}$, such that the $(B_{\Delta})^{\G_{\Kunder,\Delta}}$-module $(V \otimes_{\Qp}B_\Delta)^{\G_{\Kunder,\Delta}}$ is free of rank $\dim_{\Qp}(V)$. Then for each $\alpha \in \Delta$, the restriction of $V$ to $\G_{K_\alpha}$ is $B$-admissible.
\end{prop}
\begin{proof}
    Up to permutation of the factors in $B_{\Delta}$ and $G_{\Kunder,\Delta}$, we can assume $\alpha=1$.
    We have that $$(V\otimes_{\Qp}B_{\Delta})^{\G_{\Kunder,\Delta}}\simeq ((((V\otimes_{\Qp} B)^{\G_{K_{\alpha_1}}}\wotimes_{\Qp}B)^{\G_{K_{\alpha_2}}}\dots)\wotimes_{\Qp}B)^{\G_{K_{\alpha_\delta}}}.$$
    We know that if $W$ is a $\G_{K_\alpha}$ representation of rank $n$, for $\alpha\in\Delta$, we have that 
    $(W\otimes_{\Qp}B)^{\G_{K_\alpha}}$ is finite dimensional over $L_\alpha$ of rank less or equal than $n$, and the equality holds if and only if $W$ is $B$-admissible.
    This implies that 
    $$(V\otimes_{\Qp}B_{\Delta})^{\G_{\Kunder,\Delta}}\simeq ((((V\otimes_{\Qp} B)^{\G_{K_{\alpha_1}}}\otimes_{\Qp}B)^{\G_{K_{\alpha_2}}}\dots)\otimes_{\Qp}B)^{\G_{K_{\alpha_\delta}}}.$$
    Suppose that the restriction of $V$ to $\G_{K_{\alpha_1}}$ is not $B$-admissible. This implies that 
    $(V\otimes_{\Qp}B)^{\G_{K_{\alpha_1}}}$ is a vector space over $K_{\alpha_1}$ of rank less than $d$.
    Thus $((V\otimes_{\Qp} B)^{\G_{K_{\alpha_1}}}\otimes_{\Qp}B)^{\G_{K_{\alpha_2}}}$ is a $L_{\alpha_2}$-vector space of rank less than $[L_{\alpha_1}:\Qp]d$. Inductively we obtain that $(V\otimes_{\Qp}B_{\Delta})^{\G_{\Kunder,\Delta}}$ is a $\Qp$-vector space of dimension less than $d[L_{\alpha_1}:\Qp]\dots[L_{\alpha_\delta}:\Qp]$. But this contradicts the hypothesis that $(V\otimes_{\Qp}B_{\Delta})^{\G_{\Kunder,\Delta}}$ is free of rank $d$ over $L_{\alpha_1}\otimes_{\Qp}\dots\otimes_{\Qp}L_{\alpha_\delta}$.
\end{proof}

In particular, combining both results above with proposition \ref{prop Bdrdel=prod of Bdr}, we deduce the following corollary, which in the case $B = \Bdr$ answers the question asked at the end of \cite[\S 4]{BriChiaMaz21}: 

\begin{coro}
    Let $V$ be a $p$-adic representation of $\G_{\Kunder,\Delta}$, such that the $(B_{\Delta})^{\G_{\Kunder,\Delta}}$-module $(V \otimes_{\Qp}B_\Delta)^{\G_{\Kunder,\Delta}}$ is free of rank $\dim_{\Qp}(V)$. Then $V$ is $B$-admissible.
\end{coro}

Note that, if $V$ is a $p$-adic representation of $\G_{\Kunder,\Delta}$, then the $F_1 \otimes_{\Qp} \cdots \otimes_{\Qp} F_\delta$-module $\D_{\st,\Delta}(V):=(\B_{\st,\Delta} \otimes_{\Qp}V)^{\G_{\Kunder,\Delta}}$ is equipped with a filtration indexed by $\Z^\delta$ induced by the one on $\D_{\dR,\Delta}(V)$ and which can be split into partial filtration $(\Fil_\alpha)_{\alpha \in \Delta}$ and with partial Frobenii $(\phi_\alpha)_{\alpha \in \Delta}$ and partial monodromy operators $(N_\alpha)_{\alpha \in \Delta}$. 

In the classical setting, those objects are called filtered $(\phi,N)$-modules, and one can attach to them a Hodge polygon (coming from the filtration) and a Newton polygon (coming from the Frobenius structure). A filtered $(\phi,N)$-module $\D$ is said to be admissible if there exists a semistable $p$-adic representation $V$ such that $\D = \D_{\st}(V)$, and is said to be weakly admissible if some technical condition regarding the way the Frobenius structure and the filtration is met (if for any submodule stable by $\phi$ and $N$, the Newton polygon of the submodule lies above the Hodge polygon attached to the induced filtration). The main theorem of \cite{CF00} shows that weakly admissibility and admissibility are equivalent.

Here, we can define admissibility the same way: a filtered $(\phi_\alpha,N_\alpha)_{\alpha \in \Delta}$-module $\D$ over $F_1 \otimes_{\Qp} \cdots \otimes_{\Qp}F_\delta$ is admissible if and only if there exists a semistable representation $V$ of $\G_{\Kunder,\Delta}$ such that $\D = \D_{\st,\Delta}(V)$. Because of our corollary, it then makes sense to define weakly admissibility as follows: a filtered $(\phi_\alpha,N_\alpha)_{\alpha \in \Delta}$-module $\D$ over $F_1 \otimes_{\Qp} \cdots \otimes_{\Qp}F_\delta$ is weakly admissible if and only if for each submodule stable by the operators $(\phi_\beta)_{\beta \in \Delta}$ and $(N_\beta)_{\beta \in \Delta}$, and for each $\alpha \in \Delta$, the Hodge polygon of index $\alpha$ induced by the global filtration on this submodule lies below the Newton polygon of index $\alpha$. In this setting however, it does not seem clear wether or not admissibility and weakly admissibility are still equivalent.

\bibliographystyle{alpha}
\bibliography{bibli}

\end{document}